\newtheorem{theorem}{Theorem}[section]
\newtheorem{lemma}[theorem]{Lemma}
\newtheorem{prop}[theorem]{Proposition}
\newtheorem{corollary}[theorem]{Corollary}
\theoremstyle{definition}
\theoremstyle{remark}
\newtheorem{remark}[theorem]{Remark}
\numberwithin{equation}{section}
\newcommand{\lip}{\mathrm{Lip}}
\newcommand{\capa}{\mathrm{cap}}
\DeclareMathOperator{\supp}{\mathrm{supp}}
\newcommand{\De}{\mathrm{d}}
\newcommand{\cA}{\ensuremath{\mathcal A}}
\newcommand{\cB}{\ensuremath{\mathcal B}}
\newcommand{\cC}{\ensuremath{\mathcal C}}
\newcommand{\cD}{\ensuremath{\mathcal D}}
\newcommand{\cE}{\ensuremath{\mathcal E}}
\newcommand{\cI}{\ensuremath{\mathcal I}}
\newcommand{\cK}{\ensuremath{\mathcal K}}
\newcommand{\cL}{\ensuremath{\mathcal L}}
\newcommand{\cM}{\ensuremath{\mathcal M}}
\newcommand{\cR}{\ensuremath{\mathcal R}}
\newcommand{\cS}{\ensuremath{\mathcal S}}
\newcommand{\cU}{\ensuremath{\mathcal U}}
\newcommand{\cV}{\ensuremath{\mathcal V}}
\newcommand{\cW}{\ensuremath{\mathcal W}}
\newcommand{\bbE}{\ensuremath{\mathbb E}}
\newcommand{\bbL}{\ensuremath{\mathbb L}}
\newcommand{\bbP}{\ensuremath{\mathbb P}}
\newcommand{\bbR}{\ensuremath{\mathbb R}}
\newcommand{\bbZ}{\ensuremath{\mathbb Z}}
\newcommand{\scrL}{\mathscr{L}}
\newcommand{\scrM}{\mathscr{M}}
\newcommand{\scrh}{\mathscr{h}}
\begin{document}

\title[Entropic repulsion by disconnection]{Entropic repulsion for the occupation-time field of random interlacements conditioned on disconnection}


\author{Alberto Chiarini}
\address{Mathematics Department, UCLA}
\curraddr{520, Portola Plaza, 90095 Los Angeles, USA}
\email{chiarini@math.ucla.edu}
\thanks{}

\author{Maximilian Nitzschner}
\address{Departement Mathematik, ETH Z\"urich}
\curraddr{101, R\"amistrasse, CH-8092 Z\"urich, Switzerland}
\email{maximilian.nitzschner@math.ethz.ch}
\thanks{}

\begin{abstract}
 We investigate percolation of the vacant set of random interlacements on $\bbZ^d$, $d\geq 3$, in the strongly percolative regime. 
 We consider the event that the interlacement set at level $u$ disconnects the discrete blow-up of a compact set $A\subseteq \bbR^d$ from the boundary of an enclosing box. We derive asymptotic large deviation upper bounds on the probability that the local averages of the occupation times deviate from a specific function depending on the harmonic potential of $A$, when disconnection occurs. 
 If certain critical levels coincide, which is plausible but open at the moment, these bounds imply that conditionally on disconnection, the occupation-time profile undergoes an entropic push governed by a specific function depending on $A$. Similar entropic repulsion phenomena conditioned on disconnection by level-sets of the discrete Gaussian free field on $\bbZ^d$, $d \geq 3$, have been obtained by the authors in~\cite{chiarini2018entropic}. Our proofs rely crucially on the `solidification estimates' developed in~\cite{nitzschner2017solidification} by A.-S.\ Sznitman and the second author.
\end{abstract}

\subjclass[2010]{}
\keywords{}
\date{}
\dedicatory{}
\maketitle

\section{Introduction}
\label{sec:introduction}
Random interlacements have been introduced to understand the kind of disconnection or fragmentation created by a simple random walk, and constitute a percolation model with long-range dependence and non-trivial percolative properties, see e.g.~\cite{ sznitman2009domination, sznitman2010vacant, teixeira2011fragmentation}.
This article aims at understanding the optimal way for random interlacements on $\bbZ^d$, $d \geq 3$, to disconnect the discrete blow-up of a compact set from an enclosing box, when their vacant set is in a strongly percolative regime. 

Specifically, we consider for the discrete blow-up of a  compact set $A \subseteq \mathbb{R}^d$ the \textit{disconnection event} that random interlacements isolate it from the boundary of an enclosing box. Our goal is to track the behavior that conditioning on disconnection entails for the occupation-time profile of random interlacements. As a main result, we derive an asymptotic large deviation upper bound on the probability of the event that the average of the occupation-time profile deviates from a certain function involving the harmonic potential of $A$, when disconnection occurs. Large deviation results on the probability of the disconnection event itself have been obtained in \cite{li2014lowerBound}, concerning lower bounds, and in \cite{sznitman2015disconnection} for upper bounds in the case where $A$ is itself a box. The latter were later generalized to arbitrary compact sets $A$ in \cite{nitzschner2017solidification} by making use of `solidification estimates', a technique that is also pivotal in this work. It is plausible but open at the moment that certain critical levels for the percolation of the vacant set of random interlacements coincide. If this is the case and the set $A$ is regular, the upper and lower bounds of the references given above would match in principal order and yield the exact asymptotic behavior for the probability of the disconnection event. Under the same circumstances, the results put forward in this work imply that conditioning on disconnection will effectively force the occupation times of the random interlacements to be pinned locally to $(\sqrt{u} + (\sqrt{u_\ast} - \sqrt{u})\scrh_{{A}}(\cdot/N))^2$, where $\scrh_A$ is the harmonic potential of the set $A$, $u$ is the level of the random interlacements under consideration and $u_\ast$ is the critical level for percolation of the vacant set. This shift in the local level of the occupation-time profile conditionally on disconnection should be compared with the `strategy' used in~\cite{li2014lowerBound} to enforce disconnection, making use of so-called \textit{tilted interlacements} and provides further evidence that this object emerges naturally when conditioning random interlacements on disconnection.

The upward shift for the occupation-time profile can be understood in the context of \textit{entropic repulsion} phenomena, and we are guided by similar findings for the Gaussian free field conditioned on disconnection by level-sets in a strongly percolative regime, see \cite{chiarini2018entropic}, which extend a more elementary result from \cite{nitzschner2018entropic}. Namely, if certain critical levels coincide, forcing the excursion set of the Gaussian free field below a given level to disconnect the discrete blow-up of a regular set $A$ from the boundary of a box, lowers the field by an amount proportional to the harmonic potential of $A$. This behavior is reminiscent of the study of classical entropic repulsion phenomena, which focuses on a Gaussian free field conditioned to be positive over a given set, see for instance \cite{bolthausen2001entropic, bolthausen1995entropic, deuschel1999pathwise}. 

\vspace{\baselineskip}

We will now describe the model and our results in a more detailed way. Consider $\bbZ^d$, $d \geq 3$. For a given $u \geq 0$ we let $\cI^u$ stand for continuous-time random interlacements at level $u$ in $\bbZ^d$, which are governed by some probability measure $\bbP$. The vacant set at level $u$ is denoted by $\cV^u = \bbZ^d \setminus \cI^u$. For a thorough introduction and background on the model we refer to~\cite{drewitz2014introduction}. There are three critical levels $0 < \overline{u} \leq u_\ast \leq u_{\ast\ast} < \infty$ in the study of the percolation of the vacant set. The strongly non-percolative regime for $\cV^u$ corresponds to $u > u_{\ast\ast}$, and the strongly percolative regime to $0 < u < \overline{u}$ (the positivity of $\overline{u}$ was proved in~\cite{drewitz2014local}). We refer to (1.2) and (3.3) of \cite{sznitman2015disconnection} for the precise definition of these levels. The level $u_\ast$ corresponds to the threshold of percolation for the vacant set of random interlacements. Although plausible, it is still an open question to show that these three levels are in fact equal (progress towards showing $u_\ast = u_{\ast\ast}$ might come from~\cite{duminil2017sharp}).
\vspace{\baselineskip} 

We consider a compact set $A \subseteq \bbR^d$ with non-empty interior which is contained in the interior of a closed box of side-length $2M$, $M > 0$, centered at the origin. For an integer $N \geq 1$, we define 
\begin{equation}
A_N = (NA) \cap \bbZ^d \text{ and } S_N = \lbrace x \in \bbZ^d ; |x|_\infty = \lfloor MN\rfloor \rbrace
\end{equation}
(where $\lfloor\cdot\rfloor$ denotes the integer part and $| \cdot|_\infty$ the sup-norm of a vector in $\bbR^d$), which are the discrete blow-up of $A$ and the boundary of the discrete blow-up of the box that contains $A$. In what follows, we study the \textit{disconnection~event} 
\begin{equation}
 \label{eq:DisconnectionEvent}
 \cD^u_N = \Big\lbrace A_N \stackrel{\cV^u}{\centernot\longleftrightarrow} S_N \Big\rbrace,
 \end{equation}
 which stands for the absence of a path in $\cV^u$ that connects $A_N$ to $S_N$. The asymptotic leading order behavior of $\bbP[\cD^u_N]$ has been obtained in~\cite{li2014lowerBound,nitzschner2017solidification}. On one hand, Theorem 0.1 of \cite{li2014lowerBound} gives the lower bound for $u < u_{\ast\ast}$
 \begin{equation}
 \label{eq:LowerBoundInterlacements}
 \liminf_N \frac{1}{N^{d-2}} \log \bbP[\cD^u_N] \geq - \frac{1}{d}(\sqrt{u_{\ast\ast}} - \sqrt{u})^2 \capa(A),
 \end{equation}
 where $\capa(\cdot)$ stands for the Brownian capacity (see for instance~\cite{port2012brownian}, p.58 for a definition). 
 
 On the other hand, Theorem 4.1 of \cite{nitzschner2017solidification} also provides us with an upper bound for $u< \overline{u}$, namely
 \begin{equation}
 \label{eq:UpperBoundInterlacements}
 \limsup_N \frac{1}{N^{d-2}} \log \bbP[\cD^u_N] \leq - \frac{1}{d}(\sqrt{\overline{u}} - \sqrt{u})^2 \capa(\mathring{A}), 
\end{equation}  
where $\mathring{A}$ denotes the interior of the set $A$. As mentioned, if $A$ is \textit{regular} in the sense that $\capa(A) = \capa(\mathring{A})$ and if the critical levels $\overline{u}$, $u_\ast$ and $u_{\ast\ast}$ are shown to be equal, then the right hand sides of \eqref{eq:LowerBoundInterlacements} and \eqref{eq:UpperBoundInterlacements} coincide.  
\vspace{\baselineskip}

The proof of the lower bound \eqref{eq:LowerBoundInterlacements} is based on a change of probability method and involves the use of probability measures $\widetilde{\bbP}_N$ governing \textit{tilted interlacements}. The choice of $\widetilde{\bbP}_N$ corresponds in essence to a certain strategy to enforce disconnection --- roughly speaking, under $\widetilde{\bbP}_N$ the interlacements follow a slowly space-modulated intensity equal to $\scrM^u_A(\tfrac{x}{N}) = (\sqrt{u} + (\sqrt{u_{\ast\ast}} - \sqrt{u}) \scrh_A(\tfrac{x}{N}))^2$, $x \in \bbZ^d$, which informally creates a ``fence'' around $A_N$, where they locally behave as interlacements at a level $u_{\ast\ast}$ (one actually chooses a level slightly above $u_{\ast\ast}$ in the construction). Thus, the tilted interlacements are in a strongly non-percolative regime in the vicinity of $A_N$, and disconnect this set from $S_N$ with high probability, or in other words, $\cD^u_N$ becomes typical under $\widetilde{\bbP}_N$. The fact that the lower and upper bounds coincide for regular sets, if the critical levels are the same, hints at a certain optimality of the tilted interlacements: Whenever the rare event $\cD^u_N$ occurs, we expect the random interlacements to effectively behave like tilted interlacements. Our main results~\eqref{eq:intro_main_result} and~\eqref{eq:intro_corollary} provide further evidence that this reasoning is indeed correct. 
\begin{figure}[ht]\label{fig:pushup}
    \centering
    \includegraphics[width=0.8\textwidth]{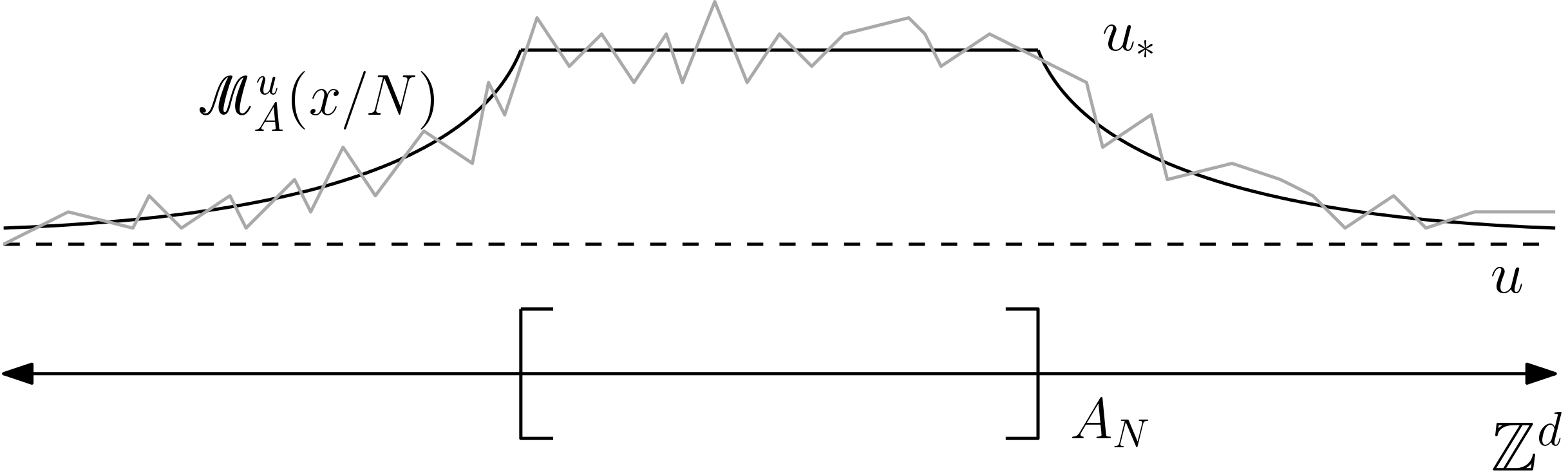}     \caption{Occupation-time field conditioned on disconnection.}
\end{figure}

We introduce the random measure on $\bbR^d$
\begin{equation}
\scrL_{N,u} = \frac{1}{N^d} \sum_{x \in \bbZ^d} L_{x,u} \delta_{\tfrac{x}{N}}, 
\end{equation}
where $(L_{x,u})_{x \in \bbZ^d}$ stands for the field of occupation times of continuous-time random interlacements at level $u > 0$ (see Section~\ref{sec:notation} for details) and we define for any continuous, compactly supported function $\eta: \bbR^d \rightarrow \bbR$, and any signed Radon measure $\nu$ on $\bbR^d$
\begin{equation}
\langle \nu, \eta \rangle = \int \eta(x) \nu(\De x).
\end{equation}
Moreover, if $\nu(\De x) = f(x) \De x$ holds, we write $\langle f, \eta \rangle$ instead of $\langle \nu, \eta \rangle$.
For two non-negative Radon measures $\mu$, $\nu$ on $B_R\subseteq \bbR^d$, a closed box of side length $2R >0$ centered at the origin, we denote by $d_R(\mu,\nu)$ the sum of $|\mu(B_R) - \nu(B_R)|$ and the $1$-Wasserstein distance among the probability measures obtained by normalizing $\nu$ and $\mu$ by their respective total masses, see \eqref{Distance} for a precise definition.

Finally, we introduce for $B \subseteq \bbR^d$ open or closed the function
\begin{equation}
\label{eq:DefinitionScrL}
\scrM^u_B(x) = \big(\sqrt{u} + (\sqrt{\overline{u}} - \sqrt{u})\scrh_{B}(x)\big)^2,\qquad x\in \bbR^d,
\end{equation}
with $\scrh_B$ the harmonic potential of $B$ (see~\eqref{eq:HarmonicPot}).

Our main result comes in Theorem \ref{thm:MainResult} and states that for $u < \overline{u}$, $\Delta > 0$ and any $R>0$ with $[-M,M]^d \subseteq B_R$, one has
\begin{equation}\label{eq:intro_main_result}
    \begin{aligned}
\limsup_{N } \frac{1}{N^{d-2}} \log \bbP &\Big[d_R(\scrL_{N,u},\scrM^u_{\mathring{A}}) \geq \Delta ; \cD^u_N \Big] \\ &\leq - \frac{1}{d}(\sqrt{\overline{u}} - \sqrt{u})^2 \capa(\mathring{A}) - c_1(\Delta, R, A,u),
    \end{aligned}
\end{equation}
where $c_1(\Delta, R, A,u)$ is a positive constant depending on $\Delta, R, A, u$ and also on $d$, which fulfills $c_1(\Delta, R, A,u) \sim c_2(\Delta, R, A)\sqrt{u}$ as $u \rightarrow 0$, where $c_2(\Delta, R, A) > 0$. Moreover, we show in Corollary \ref{thm:CorollaryPush} that if $\overline{u} = u_\ast = u_{\ast\ast}$ holds and $\capa(A) = \capa(\mathring{A})$, one has an asymptotic result for the conditional measure given disconnection, which states that for $u < u_\ast$ and $R>0$ fulfilling $[-M,M]^d \subseteq B_R$, it holds that
\begin{equation}
\label{eq:intro_corollary}
\lim_{N} \bbE\Big[d_R(\scrL_{N,u},\scrM^u_{\mathring{A}})  \wedge 1 \big\vert  \cD^u_N \Big] = 0.
\end{equation}

If we fix $R>0$ large enough so that $[-M,M]^d \subseteq B_R$, then, $\bbP[\ \cdot\ | \cD^u_N]$-almost surely, one has $\scrL_{N,u}(B_R) > 0$, thus, in view of the definition of $d_R$, we can rephrase the above statement as follows: conditionally on $\cD^u_N$, the random measure $\scrL_{N,u}$ converges weakly in probability to $\scrM^u_A$ when restricted to $B_R$. In other words, local averages of the occupation-time field are pinned to~$\scrM^u_A$. 
\vspace{\baselineskip}

As mentioned earlier, random interlacements were introduced to study the disconnection or fragmentation created by a random walk, which itself can be seen heuristically as the limit as $u \rightarrow 0$ of random interlacements. Therefore, it is natural to ask whether the occupation-time field of the random walk experiences a similar entropic push when it is conditioned to isolate the blow-up of the macroscopic body $A_N$ from the boundary of the enclosing box $S_N$. In the case of large deviation upper bounds for the disconnection by random walk, a coupling argument was pertinent to infer the leading order behavior for the disconnection probability as the limit when $u \rightarrow 0$ of the equivalent quantity for random interlacements, see~Corollary 6.4 of \cite{sznitman2015disconnection} and Corollary 4.4 of~\cite{nitzschner2017solidification}. One may hope that the results of this article provide some insight into the `random walk conditioned on disconnection', see also Remark \ref{Remark_RW}.

\vspace{\baselineskip}
The article is organized as follows: In Section~\ref{sec:notation}, we introduce further notation and recall useful results about random walks and random interlacements, some potential theory and solidification estimates from \cite{nitzschner2017solidification}. In Section~\ref{sec:occupationtimebound}, we prove an exponential upper bound for the occupation time of a perturbed potential, which is instrumental in the proof of the main result. In Section~\ref{sec:entropicpush}, we state and prove Theorem~\ref{thm:MainResult}, which corresponds to the entropic repulsion under disconnection for the occupation-time field~\eqref{eq:intro_main_result}. In the Appendix, we provide in Proposition \ref{ComparisonCapacities} an asymptotic comparison between Brownian capacities of certain well-separated finite collection of boxes in $\bbR^d$ of similar sizes.
\vspace{\baselineskip}

We conclude this introduction with our convention regarding constants. We denote by $c,\,c',\,\ldots$ positive constants changing from place to place. Numbered constants $c_1,\,c_2,\ldots$ will refer to the value assigned to them when they first appear in the text and dependence on additional parameters is indicated in the notation. All constants may depend implicitly on the dimension.

\section{Notation and useful results}\label{sec:notation}

In this section we introduce some notation and collect useful results concerning random walks, potential theory, random interlacements and the solidification estimates for porous interfaces from~\cite{chiarini2018entropic} and~\cite{nitzschner2017solidification}. These solidification estimates will be pivotal in the following sections to derive the large deviation upper bound~\eqref{eq:intro_main_result}. We will assume that $d \geq 3$ throughout the article.
\vspace{\baselineskip}

We start by introducing some notation. For real numbers $s,t$, we denote by $s \vee t$ and $s \wedge t$ the maximum and minimum of $s$ and $t$, respectively, and we denote the integer part of $s$ by $\lfloor s \rfloor$. We consider on $\mathbb{R}^d$ the Euclidean and $\ell^\infty$-norms $|\cdot|$ and $|\cdot|_\infty$ and  the corresponding closed balls $B_2(x,r)$ and $B_\infty(x,r)$ of radius $r\geq 0$ and center $x \in \bbR^d$. Also,  we denote by $B(x,r) = \lbrace y \in \mathbb{Z}^d; |x-y|_\infty \leq r \rbrace \subseteq \bbZ^d$ the closed $\ell^\infty$-ball of radius $r\geq 0$ and center $x\in \bbZ^d$.  
For subsets $G,H \subseteq \mathbb{R}^d$, we denote by $d(G,H)$ their mutual $\ell^\infty$-distance, i.e.\ $d(G,H) = \inf \lbrace |x-y|_\infty ; x \in G , y \in H \rbrace$ and write for simplicity $d(x,G)$ instead of $d(\lbrace x \rbrace, G)$ for $x \in \mathbb{R}^d$. For $K \subseteq \mathbb{Z}^d$, we let $|K|$ denote the cardinality of $K$.
If $x,y \in \mathbb{Z}^d$ fulfill $|x - y| = 1$, we call them neighbors and write $x \sim y$. We call $\pi : \lbrace 0,\ldots, N \rbrace \rightarrow \mathbb{Z}^d$ a nearest neighbor path (of length $N \geq 1$) if $\pi(i) \sim \pi(i+1)$ for all $0 \leq i \leq N-1$. For subsets $K,K',U \subseteq \mathbb{Z}^d$, we write $K \stackrel{U}{\leftrightarrow} K'$ (resp.\ $K \stackrel{U}{\nleftrightarrow} K'$) if there is a path $\pi$ with values in $U$ starting in $K$ and ending in $K'$ (resp.\ if there is no such path) and we say that $K$ and $K'$ are connected in $U$ (resp.\ not connected in $U$). Given two measurable, real-valued functions $f,g$ on $\bbR^d$ such that $|fg|$ is Lebesgue-integrable we define $\langle f,g\rangle = \int f(y)g(y)dy$. For functions $f:\bbR^d \rightarrow \bbR$ and $h: \bbZ^d \rightarrow \bbR$, we denote by $\|f\|_\infty$ and $\|h \|_\infty$ the respective supremum norms over $\bbR^d$ and $\bbZ^d$, and we denote by $f^+ = f \vee 0$ and $f^- = (-f) \vee 0$ the positive and negative part of $f$, respectively. If $f:\bbR^d\to \bbR$ is continuous and compactly supported and $\nu$ is a Radon measure on $\bbR^d$ we write $\langle \nu, f \rangle = \int f\De \nu$. Similarly, for functions $u, v : \bbZ^d \rightarrow \bbR$, we will routinely write $\langle u, v\rangle_{\bbZ^d} = \sum_{ z \in \bbZ^d} u(z)v(z)$, if $|uv|$ is summable.
\vspace{\baselineskip}

We now introduce some path spaces and the set-up for the continuous-time simple random walk on $\bbZ^d$.  We denote by $\widehat{W}_+$ and $\widehat{W}$ the spaces of infinite and doubly-infinite $\bbZ^d \times (0,\infty)$-valued sequences, such that the first coordinate sequence forms a nearest-neighbor path in $\bbZ^d$, spending finite time in any finite subset of $\bbZ^d$, and the sequence of second coordinates (interpreted as time spent at a lattice site) has an infinite sum, respectively infinite forward and backwards sums. We denote by $\widehat{\cW}_+$ and $\widehat{\cW}$ the respective $\sigma$-algebras generated by the coordinate maps. The measure $P_x$ is a law on $(\widehat{W}_+,\widehat{\cW}_+)$ under which the sequence of first coordinates $(Z_n)_{n \geq 0}$ has the law of a simple random walk on $\bbZ^d$ starting from $x \in \bbZ^d$ and the sequence of second coordinates $(\zeta_n)_{n \geq 0}$ are i.i.d.~exponential variables with parameter $1$, independent from $(Z_n)_{n \geq 0}$. We call $E_x$ the expectation associated to $P_x$.

To $\widehat{w} \in \widehat{W}_+$, we attach a continuous-time trajectory $(X_t(\widehat{w}))_{t \geq 0}$ via the definition
\begin{equation}
X_t(\widehat{w}) = Z_n(\widehat{w}), \mbox{ for } t \geq 0,\quad \text{ when }\, \sum_{i = 0}^{n-1} \zeta_i(\widehat{w}) \leq t < \sum_{i = 0}^{n} \zeta_i(\widehat{w}),
\end{equation}
the left-hand side being $0$ if $n = 0$. Thus, $(X_t)_{t\geq 0}$  under $P_x$ describes the continuous-time simple random walk with unit jump rates starting from $x$. 

For a subset $K \subseteq \mathbb{Z}^d$, we introduce $H_K = \inf \lbrace t \geq 0; X_t \in K \rbrace$, $\widetilde{H}_K = \inf \lbrace t \geq \zeta_1; X_t \in K \rbrace$, and $T_K = \inf \lbrace t \geq 0; X_t \notin K \rbrace$, the entrance, hitting and exit times of $K$. The Green function of the random walk $g(\cdot,\cdot)$ is then defined by
\begin{equation}
\label{eq:GreenFunction}
g(x,y) = E_x\bigg[\int_0^\infty 1_{\lbrace X_s = y \rbrace} ds \bigg], \text{ for } x,y \in \mathbb{Z}^d,
\end{equation}
and since $d \geq 3$, it is finite. Moreover, one has $g(x,y) = g(y,x) = g(x-y,0) =: g(x-y)$ and the following asymptotic behavior (see e.g. Theorem 5.4, p.31 of~\cite{lawler2013intersections}):
\begin{equation}
\label{eq:AsymptoticBehaviourGreen}
g(x) \sim \frac{C_d}{|x|^{d-2}}, \qquad \text{as } |x| \rightarrow \infty, \text{ with }C_d = \frac{d}{2\pi^{\frac{d}{2}}}\Gamma\left(\frac{d}{2}-1 \right).
\end{equation}
For a finitely supported function $f:\bbZ^d\to \bbR$ we write 
\begin{equation}
\label{eq:DefGOperator}
G f(x) = \sum_{y\in \bbZ^d} g(x,y) f(y),\quad x \in \bbZ^d.
\end{equation}
The equilibrium measure of a finite subset $K \subseteq \mathbb{Z}^d$ is defined by
\begin{equation}
\label{eq:EqMeasure}
e_K(x) = P_x[\widetilde{H}_K = \infty] \mathbbm{1}_K(x), \text{ for } x \in \mathbb{Z}^d,
\end{equation}
and its total mass
\begin{equation}
\text{cap}_{\mathbb{Z}^d}(K) = \sum_{x \in K}e_K(x)
\end{equation}
is called the (discrete) capacity of $K$. If $K$ is non-empty, we denote by $\overline{e}_K$ the normalized equilibrium measure of $K$, that is
\begin{equation}
\label{eq:NormEqMeausre}
\overline{e}_K(x) = \frac{e_K(x)}{\capa_{\bbZ^d}(K)},\quad x \in \bbZ^d.
\end{equation}
 Also, for a set $K\subseteq \bbZ^d$ we write 
\begin{equation}
\label{eq:harmPotentialDiscrete}
    h_K(x) = P_x[H_K<\infty], \quad x\in \bbZ^d,
\end{equation}
for the harmonic potential associated to $K$. 
Recall that for finite $K \subseteq \mathbb{Z}^d$, one has
\begin{equation}
\label{eq:EquilibriumPotential}
h_K(x) = G e_K(x), \text{ for }x \in \mathbb{Z}^d,
\end{equation} 
see e.g.\ Theorem 25.1, p.300 of \cite{spitzer2013principles}.  
Finally, for functions $f,g:\bbZ^d \to \bbR$ we define the discrete Dirichlet form by
\begin{equation}
    \label{eq:discreteDF}
    \cE_{\bbZ^d}(f,g) = \frac{1}{4d}\sum_{x\sim y} (f(y)-f(x)) (g(y)-g(x)),
\end{equation}
whenever the above expression is absolutely summable. Moreover, we will use the shorthand notation $\cE_{\bbZ^d}(f) = \cE_{\bbZ^d}(f,f)$.
\vspace{0.5\baselineskip}

We now introduce continuous-time random interlacements. We refer to~\cite{drewitz2014introduction} for more details on (discrete-time) random interlacements and to~\cite{sznitman2012isomorphism} for the case of continuous-time random interlacements. We write $\widehat{W}^\ast$ for the space $\widehat{W}$ modulo time shift, that is $\widehat{W}^\ast = \widehat{W} / \sim$ where $\widehat{w} \sim \widehat{w}'$ if there is a $k \in \bbZ$ such that $\widehat{w} = \widehat{w}'(\cdot + k)$. Moreover, we denote by $\pi^\ast : \widehat{W} \rightarrow \widehat{W}^\ast$ the canonical projection and endow $\widehat{W}^\ast$ with the push-forward $\sigma$-algebra of $\widehat{\cW}$ under $\pi^\ast$. For a finite set $A\subseteq \bbZ^d$ we denote  by $\widehat{W}^\ast_A$ the subset of $\widehat{W}^\ast$ of trajectories modulo time-shift that intersect $A$. For $\widehat{w}^*\in \widehat{W}^\ast_A$ we define $\widehat{w}^*_{A,+}$ to be the unique element of $\widehat{W}_+$ that follows $\widehat{w}^*$ step by step from the first time it enters $A$. More precisely, taking the unique $\widehat{w}\in \widehat{W}$ such that $\pi^\ast(\widehat{w}) = \widehat{w}^*$, $\widehat{w}(0) \in A\times(0,\infty)$ and $\widehat{w}(k) \notin A\times(0,\infty)$ for all $k<0$, we define $\widehat{w}^*_{A,+}(k) = \widehat{w}(k)$ for all $k\geq 0$.

The continuous-time random interlacements is then a Poisson process on $\widehat{W}^\ast\times \bbR_+$, with intensity measure $\widehat{\nu}(\De \widehat{w}^*)\,\De u$, where $\widehat{\nu}$ is a $\sigma$-finite measure on $\widehat{W}^*$ such that its restriction to $\widehat{W}^*_A$ (denoted by $\widehat{\nu}_A$) is equal to $\pi^*\circ \widehat{Q}_A$ where $\widehat{Q}_A$ is a finite measure on $\widehat{W}$ such that if $(X_t)_{t\in \bbR}$ is the continuous-time walk attached to $\widehat{w}\in \widehat{W}$ (see (1.7) in~\cite{sznitman2012isomorphism}), then
\begin{equation}
    \widehat{Q}_A [X_0 = x] = e_A(x),
\end{equation}
and when $e_A(x)>0$,
\begin{equation}
    \begin{minipage}{0.75\linewidth}
        under $\widehat{Q}_A$ conditioned on $X_0 = x$, $(X_t)_{t\geq 0}$ and the right-continuous regularization of $(X_{-t})_{t\geq 0}$ are independent and are distributed respectively as $(X_t)_{t\geq 0}$ under $P_x$ and as $(X_t)_{t\geq 0}$ under $P_x[\,\cdot\,| \widetilde{H}_A = \infty]$.
    \end{minipage}
\end{equation}
The space $\Omega$ where the Poisson point measure is defined can be chosen as
\begin{equation}
    \Omega = \left\{ 
        \begin{minipage}{0.75\linewidth}
            $\omega = \sum_{i\geq 0} \delta_{(\widehat{w}^*_i, u_i)}$; with $\widehat{w}_i^*\in \widehat{W}^*$ for each $i\geq 0$,  $u_i$ distinct, so that $\omega(\widehat{W}^\ast_A\times [0,u])<\infty$ and $\omega(\widehat{W}^\ast_A\times \bbR_+)=\infty$, for any non-empty finite $A\subseteq \bbZ^d$ and $u\geq 0$. \phantom{$\widehat{Q}$}
        \end{minipage}
    \right\}.
\end{equation}
The space $\Omega$ is endowed with the canonical $\sigma$-algebra and we denote by $\bbP$ the law on $\Omega$ under which $\omega$ is a Poisson point process of intensity measure $\widehat{\nu}\otimes \De u$.

Then, given $\omega=\sum_{i\geq 0} \delta_{(\widehat{w}^*_i, u_i)}$ in $\Omega$ and $u\geq 0$, the random interlacement at level $u$, and the vacant set at level $u$, are defined as the random subsets of $\bbZ^d$
\begin{equation}
    \cI^u(\omega) = \bigcup_{i: u_i\leq u} \mathrm{Range}(\widehat{w}^*_i),\quad \cV^u(\omega) = \bbZ^d \setminus \cI^u(\omega),
\end{equation}
where for $\widehat{w}^*\in \widehat{W}^*$, $\mathrm{Range}(\widehat{w}^*)$ stands for the set of points in $\bbZ^d$ visited by the first coordinate sequence associated to any $\widehat{w}\in\widehat{W}$ such that $\pi^*(\widehat{w}) = \widehat{w}^*$.

 
The main object of interest for us is $L_{x,u}(\omega)$, the  (continuous) occupation time at site $x$ and level $u$ of random interlacements, that is, the total time spent at $x$ by all trajectories $\widehat{w}_i^*$ with label $u_i\leq u$ in the cloud $\omega = \sum_{i\geq 0} \delta_{(\widehat{w}_i^*, u_i)} \in \Omega$. Formally, we define
\begin{equation}
\begin{split}
& L_{x,u}(\omega) = \sum_{i \geq 0} \sum_{n\in \bbZ} \zeta_n(\widehat{w}_i) \mathbbm{1}_{\{ Z_n(\widehat{w}_i) = x, u_i \leq u \}}, \text{ for } x\in \bbZ^d, u \geq 0, \\
& \text{for }\omega  = \sum_{i\geq 0} \delta_{(\widehat{w}_i^*, u_i)} \in \Omega, \text{ and } \pi^\ast(\widehat{w}_i) = \widehat{w}_i^\ast \text{ for any } i \geq 0.
\end{split}
\end{equation}
One knows that $\bbE[L_{x,u}] = u$ and also the following formula for the Laplace transform of $(L_{x,u})_{x\in \bbZ^d}$ (see Theorem 2.1 of~\cite{sznitman2012random}). Namely,
for any $V:\bbZ^d\to \bbR$ finitely supported such that $\|G |V| \|_\infty<1$, and $u\geq 0$,\begin{equation}\label{eq:laplacetransform}
            \bbE\Big[\exp\Big\{\sum_{x\in \bbZ^d} V(x) L_{x,u}\Big\}\Big] =  \exp\{ u \langle V, (I-GV)^{-1} 1 \rangle_{\bbZ^d} \}.
    \end{equation} 
   On the right-hand side of this equation, $GV$ stands for the composition of $G$ with the multiplication operator by $V$, so that $GV$ operates in a natural way on $L^\infty(\bbZ^d)$, the space of bounded real functions on $\bbZ^d$ (note that $\|GV \|_{L^\infty(\bbZ^d) \rightarrow L^\infty(\bbZ^d)} = \| G | V| \|_\infty$).
    
Even though~\eqref{eq:laplacetransform} is enough for our purposes, more is known for the logarithm of the Laplace transform and a variational formula is provided in Sections 2 and 4 of~\cite{li2015large}.
\vspace{\baselineskip}

We now introduce Brownian motion on $\bbR^d$ and present some aspects of its potential theory, in a similar fashion as it was done for the simple random walk above. Let $(Z_t)_{t \geq 0}$ be the canonical process on $C(\bbR_+, \bbR^d)$ and denote by $W_z$ the Wiener measure starting from $z \in \bbR^d$ such that under $W_z$, $(Z_t)_{t \geq 0}$ is a Brownian motion starting from $z \in \bbR^d$. For any open or closed set $B\subseteq \bbR^d$, we introduce $H_B = \inf \lbrace s \geq 0; Z_s \in B \rbrace$ and $\widetilde{H}_B = \inf \lbrace s > 0; Z_s \in B \rbrace$, the entrance and hitting times of $B$ for Brownian motion, and $T_B = \inf \lbrace s \geq 0; Z_s \notin B \rbrace ( = H_{B^c})$, the exit time of Brownian motion from $B$. For later use we also define the first time when $Z$ moves at $|\cdot|_\infty$-distance $ r\geq 0$ from its starting point, 
\begin{equation}
\tau_r = \inf \lbrace s \geq 0; |Z_s - Z_0|_\infty \geq r \rbrace.
\end{equation}
For an open or closed set $B \subseteq \bbR^d$, one introduces the harmonic potential of $B$,
\begin{equation}\label{eq:HarmonicPot}
\scrh_B(z) = W_z[\widetilde{H}_B < \infty], \qquad z \in \bbR^d.
\end{equation}

For $f,g  \in H^1(\bbR^d)$, the usual Sobolev space of square-integrable functions on $\bbR^d$ with square-integrable weak derivatives, one defines the Dirichlet form attached to Brownian motion
\begin{equation}
\cE(f) = \frac{1}{2} \int_{\bbR^d} \left|\nabla f(x)\right|^2\De x,
\end{equation}
and by polarization one defines furthermore
\begin{equation}
\cE(f,g) = \frac{1}{4}\left(\cE(f+g) + \cE(f-g) \right).
\end{equation}
Note that $\cE(\cdot,\cdot)$ defined in this way is bilinear and its definition can be extended to the space of all weakly differentiable functions with finite Dirichlet energy.  
Combining Theorem 4.3.3, p.\ 171 of~\cite{fukushima2010dirichlet} with Theorem 2.1.5, p.\ 72 of the same reference, one knows that for any bounded and either open or closed set $B \subseteq \bbR^d$, $\scrh_B$ is in this extended Dirichlet space of $(\cE,H^1(\bbR^d))$ (see Example 1.5.3 in \cite{fukushima2010dirichlet} for a characterization of this space) and it holds that
\begin{equation}
\label{eq:EqualityCapacityDirichlet}
\capa(B) = \cE(\scrh_B, \scrh_B).
\end{equation}
Moreover, if $B$ is open and bounded and $(B_n)_{n \geq 1}$ is a sequence of compact sets such that $B_n \uparrow B$, then (see Proposition 1.13, p.60 of~\cite{port2012brownian})
\begin{equation}\label{eq:increasing_capacity}
\capa(B_n) \uparrow \capa(B).
\end{equation}
We also note here, that if $f \in L^2(\bbR^d)\cap L^1(\bbR^d)$ and $g$ is in the extended Dirichlet space of $(\cE,H^1(\bbR^d))$, one has
\begin{equation}
\label{eq:CSEnergies}
|\langle f,g \rangle|^2 \leq E(f) \cE(g,g),
\end{equation}
where $E(f) = \int f(x) g_{BM}(x,y) f(y)\, \De x\De y$ is the energy associated to the function $f$, with $g_{BM}(x,y)$ being the Green function of the standard Brownian motion on $\bbR^d$. To see this inequality, one can for instance show it first in the case where $f,g$ are smooth and compactly supported, and then use an approximation argument (compare also with Lemma 1.5.3, p.\ 39, and Theorem 1.5.4, p.\ 44, of~\cite{fukushima2010dirichlet}).  \vspace{\baselineskip} 

We now recall an asymptotic lower bound from~\cite{nitzschner2017solidification} on the capacity of `porous interfaces' surrounding $A \subseteq \mathbb{R}^d$ and a related estimate from~\cite{chiarini2018entropic}. These estimates will be pivotal in the derivation of the bound~\eqref{eq:MainResultSection4} of Section~\ref{sec:entropicpush}. Let $U_0$ be a non-empty Borel subset of $\bbR^d$ with complement $U_1 = \mathbb{R}^d \setminus U_0$ and boundary $ \partial U_0 = \partial U_1$. One measures the local density of $U_1$ at $x \in \bbR^d$ in dyadic scales
\begin{equation}
\widehat{\sigma}_{\ell}(x) = \frac{|B_{\infty}(x,2^{-\ell})\cap U_1|}{|B_{\infty}(x,2^{-\ell})|}, \qquad \ell \in \bbZ,
\end{equation}
 where $| \cdot |$ stands for the Lebesgue measure on $\mathbb{R}^d$. We furthermore introduce for $\ell_\ast$ non-negative integer and for a non-empty compact subset $A$ of $\mathbb{R}^d$
 \begin{equation}
 \label{eq:SegmentationClass}
 \mathcal{U}_{\ell_\ast, A} = \, \begin{minipage}{0.7\linewidth}
  the collection of bounded Borel subsets $U_0 \subseteq \mathbb{R}^d$ with $\widehat{\sigma}_{\ell}(x) \leq \tfrac{1}{2}$  for all $x\in A$  and $\ell \geq \ell_\ast$.
 \end{minipage} 
\end{equation}  
For a given non-empty Borel subset $U_0 \subseteq \bbR^d$, $\varepsilon > 0$ and $\eta \in (0,1)$ we consider the following class of `porous interfaces' 
\begin{equation}
\label{eq:ClassofporousInterf}
\begin{split}
\mathscr{S}_{U_0,\varepsilon,\eta} &= \ \text{the class of }\Sigma \subseteq \mathbb{R}^d \text{ compact with } W_z[H_\Sigma < \tau_\varepsilon] \geq \eta, \text{ for } z \in \partial U_0.
\end{split}
\end{equation}
Essentially, $\varepsilon$ controls the distance of the porous interface $\Sigma$ from $\partial U_0$ and $\eta$ corresponds to the strength with which it is `felt'. With this, we can quote the capacity lower bound (3.16) of Corollary 3.4 in~\cite{nitzschner2017solidification}, which provides for $\eta \in (0,1)$ in the limit $\varepsilon/ 2^{-\ell_\ast}$ going to zero the following uniform control:
\begin{equation}
\label{eq:SolidificationEstimate}
\lim_{u \rightarrow 0} \inf_{\varepsilon \leq u2^{-{\ell_\ast}}} \inf_A \inf_{U_0 \in \mathcal{U}_{\ell_\ast,A}} \inf_{\Sigma \in \mathscr{S}_{U_0,\varepsilon,\eta}} \frac{\text{cap}(\Sigma)}{\text{cap}(A)} = 1,
\end{equation}
where $A$ varies in the class of non-empty compact subsets of $\bbR^d$ with positive capacity. Finally, we recall a result from~\cite{chiarini2018entropic} (see Lemma 2.2). It states that in the limit $\varepsilon/2^{-\ell^*}\to 0$, the Dirichlet energy of $\scrh_A-\scrh_{\Sigma}$ is bounded from above by the capacity difference $\capa(\Sigma) -\capa(A)$, uniformly over all compacts $A\subseteq \bbR^d$ and all porous interfaces $\Sigma$. More precisely, for any $\eta\in(0,1)$ fixed
\begin{equation}
\label{eq:SolidificationDirichletEnergy}
    \limsup_{u\to 0} \sup_{\varepsilon \leq u 2^{-\ell ^*}} \sup_A \sup_{U_0 \in \mathcal{U}_{\ell_\ast,A}} \sup_{\Sigma \in \mathscr{S}_{U_0,\varepsilon,\eta}} \bigg[\cE(\scrh_A - \scrh_{\Sigma})-\big(\capa(\Sigma)-\capa(A)\big)\bigg] = 0,
\end{equation}
where $A$ varies in the class of non-empty compact subsets of $\bbR^d$. Similar to~\cite{chiarini2018entropic}, this result will be needed in the proof of Theorem~\ref{thm:MainResult} (see Step 5,~\eqref{eq:BoundSigma_LargeDirichletform}), to rule out, with high probability, the existence of atypical interfaces of bad boxes.

\section{Laplace functional of occupation-time measures of random interlacements}
\label{sec:occupationtimebound}

In this section, we derive an identity for the Laplace functional of the occupation-time measure with respect to a certain class of potentials with a small perturbation. The main result of this section, Lemma \ref{Lemma_Gauge} below, will be instrumental in giving a bound on the probability of a large deviation in the occupation-time field of random interlacements from its expectation.  

These bounds will enter the proof of our main result in the form of Corollary \ref{Cor_OccTime}, and replace in essence certain bounds involving the Borell-TIS inequality in the case of the Gaussian free field, see Proposition 4.3 of \cite{chiarini2018entropic}.

We start with some notation. For a function $V : \bbZ^d \rightarrow \bbR$ vanishing outside a finite subset of $\bbZ^d$, we define the \emph{gauge function} $\gamma_V : \bbZ^d \rightarrow [0,\infty]$ as 

\begin{equation}
\label{eq:GaugeFunction}
\gamma_V(x) = E_x\Big[\exp\Big( \int_0^\infty V(X_s) \De s \Big) \Big],
\end{equation}
where we recall that under $P_x$, $(X_s)_{s \geq 0}$ is a continuous-time random walk on $\bbZ^d$ starting from $x \in \bbZ^d$, with $E_x$ the expectation associated to $P_x$. If $V$ is such that $\| G |V| \|_\infty < 1$, one can show by expanding the exponential and using the strong Markov property in the same way as in 
Proposition 1.2 and Remark 2.1 of \cite{li2015large}, that
\begin{equation}
\label{eq:CentralEqualityGamma}
\gamma_V(x) = (I - GV)^{-1} 1 (x),
\end{equation}
which will essentially allow perturbative calculations of $\gamma_V$ (recall that $GV$ stands for the composition of $G$, defined in \eqref{eq:DefGOperator}, and the multiplication by $V$). Of particular interest for us will be the case where $V$ corresponds to a multiple of the equilibrium measure of a finite, non-empty set $C \subseteq \bbZ^d$, since then $\gamma_{ae_C}$ (where $a \in (0,1)$) will correspond to a multiple of the equilibrium potential of the set $C$, shifted by one (see Remark \ref{Remark_CalculationGamma}). 

We now present the main result of this section, in which we develop certain perturbation formulae for $\gamma_V$ which may be of independent interest.
\begin{lemma}
\label{Lemma_Gauge}
Let $V, V' : \bbZ^d \rightarrow \bbR$ be two functions which are zero outside of a non-empty finite set, and assume that
\begin{equation}
\label{eq:ConditionsGV}
\| G |V| \|_\infty < 1, \qquad  \| G |V'| \|_\infty < 1.
\end{equation}
Then, the following perturbation formulae hold:
\begin{equation}
\label{eq:PerturbationFormula}
\gamma_{V'} - \gamma_V = (I - GV)^{-1}G (V' - V) \gamma_{V'},
\end{equation}
together with
\begin{equation}
\label{eq:PerturbationTestedAgainstV}
\langle V', \gamma_{V'} \rangle_{\bbZ^d} - \langle V, \gamma_V \rangle_{\bbZ^d} = \langle (V' - V) \gamma_V, \gamma_{V'} \rangle_{\bbZ^d}.
\end{equation}
Moreover, if we assume additionally that 
\begin{equation}
\label{eq:ConditionsGV2}
 \| (I - G|V|)^{-1} G |V' - V| \|_{\infty} < 1,
\end{equation}
then it holds that
\begin{equation}
\label{eq:PerturbationExplicit}
\gamma_{V'} = ( I -  (I- GV)^{-1}G(V' - V)  )^{-1} \gamma_V.
\end{equation}
\end{lemma}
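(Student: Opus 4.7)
The three identities will all be derived from the fundamental representation $\gamma_V = (I-GV)^{-1}1$ of \eqref{eq:CentralEqualityGamma}, combined with the symmetry $g(x,y)=g(y,x)$ of $G$ and with the elementary relation $V(I-GV) = (I-VG)V$ obtained by viewing $V$ as the (self-adjoint) multiplication operator by the function $V$. Throughout, the conditions \eqref{eq:ConditionsGV} ensure that $GV$ and $GV'$ act as strict contractions on $L^\infty(\bbZ^d)$, because the operator norm of $GV$ on $L^\infty$ coincides with $\|G|V|\|_\infty$; thus their resolvents are bounded operators given by convergent Neumann series.

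\textbf{Step 1 (identity \eqref{eq:PerturbationFormula}).} Rather than invoking the resolvent identity directly, I would apply $(I-GV)$ to $\gamma_{V'}$ and write
\begin{equation*}
(I-GV)\gamma_{V'} \;=\; (I-GV')\gamma_{V'} + G(V'-V)\gamma_{V'} \;=\; 1 + G(V'-V)\gamma_{V'},
\end{equation*}
using $(I-GV')\gamma_{V'}=1$. Applying $(I-GV)^{-1}$ to both sides and subtracting $\gamma_V$ yields \eqref{eq:PerturbationFormula}.

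\textbf{Step 2 (identity \eqref{eq:PerturbationTestedAgainstV}).} I would split
$\langle V',\gamma_{V'}\rangle_{\bbZ^d} - \langle V,\gamma_V\rangle_{\bbZ^d} = \langle V'-V,\gamma_{V'}\rangle_{\bbZ^d} + \langle V,\gamma_{V'}-\gamma_V\rangle_{\bbZ^d}$
and substitute \eqref{eq:PerturbationFormula} into the second term. To evaluate $\langle V,(I-GV)^{-1}G(V'-V)\gamma_{V'}\rangle_{\bbZ^d}$, I would transpose $(I-GV)^{-1}$: since $V$ is self-adjoint as a multiplication operator and $G$ is self-adjoint, the adjoint of $(I-GV)^{-1}$ on the pairing $\langle\cdot,\cdot\rangle_{\bbZ^d}$ is $(I-VG)^{-1}$, and together with $(I-VG)^{-1}V = V(I-GV)^{-1}$ this gives
\begin{equation*}
\langle V,(I-GV)^{-1} h\rangle_{\bbZ^d} \;=\; \langle V(I-GV)^{-1}1,\,h\rangle_{\bbZ^d} \;=\; \langle V\gamma_V,\,h\rangle_{\bbZ^d}.
\end{equation*}
Choosing $h = G(V'-V)\gamma_{V'}$ and using symmetry of $G$ once more converts the expression into $\langle GV\gamma_V,(V'-V)\gamma_{V'}\rangle_{\bbZ^d}$, and invoking $GV\gamma_V = \gamma_V - 1$ (read off from $\gamma_V=1+GV\gamma_V$) produces, after cancellation of the two $\langle V'-V,\gamma_{V'}\rangle_{\bbZ^d}$ contributions, the asserted identity $\langle(V'-V)\gamma_V,\gamma_{V'}\rangle_{\bbZ^d}$.

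\textbf{Step 3 (identity \eqref{eq:PerturbationExplicit}).} Rewriting \eqref{eq:PerturbationFormula} as $(I-K)\gamma_{V'} = \gamma_V$ with $K \ldef (I-GV)^{-1}G(V'-V)$, the task is to show that $I-K$ is invertible on $L^\infty(\bbZ^d)$. The pointwise inequality $|(GV)^k h|\leq (G|V|)^k|h|$, immediate by induction from $|g(x,y)V(y)h(y)|\leq g(x,y)|V(y)||h(y)|$, yields for every $f$ with $\|f\|_\infty\leq 1$ the bound $|Kf|\leq (I-G|V|)^{-1}G|V'-V|\cdot 1$ pointwise, whose sup-norm is strictly less than $1$ by \eqref{eq:ConditionsGV2}. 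Hence $\|K\|_{L^\infty\to L^\infty}<1$, $(I-K)^{-1}$ exists as a Neumann series, and applying it to both sides of $(I-K)\gamma_{V'}=\gamma_V$ gives \eqref{eq:PerturbationExplicit}.

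\textbf{Main obstacle.} The delicate point is Step 2, where one must simultaneously regard $V$ as a function (inside the pairing $\langle V,\cdot\rangle_{\bbZ^d}$) and as a self-adjoint multiplication operator (in order to transpose the resolvent $(I-GV)^{-1}$); the symmetry of $G$ must be used twice, once to pass $(I-GV)^{-1}$ onto $V$ and once to move $G$ across the pairing, and it is precisely this double duality that collapses the correction term into the manifestly symmetric form $\langle(V'-V)\gamma_V,\gamma_{V'}\rangle_{\bbZ^d}$.
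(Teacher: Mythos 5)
Your proposal is correct and follows essentially the same route as the paper: identity \eqref{eq:PerturbationFormula} via the resolvent relation for $GV$, $GV'$ applied to the constant function $1$ (you merely derive the second resolvent identity inline by applying $I-GV$ to $\gamma_{V'}$ instead of quoting it), identity \eqref{eq:PerturbationTestedAgainstV} by the same Neumann-series/symmetry manipulation using $G(V\gamma_V)=\gamma_V-1$, and \eqref{eq:PerturbationExplicit} by inverting $I-(I-GV)^{-1}G(V'-V)$ under \eqref{eq:ConditionsGV2}. Your pointwise domination argument in Step 3 in fact spells out the operator-norm bound that the paper leaves implicit.
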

\begin{proof}
We view $GV$ and $GV'$ as operators acting on $L^\infty(\bbZ^d)$. By \eqref{eq:ConditionsGV}, the resolvent sets of both operators contain $1$ and by the second resolvent identity (see Lemma 6.5 in~\cite{teschl2009mathematical}) we have
\begin{equation}
(I - GV')^{-1} - (I - GV)^{-1} = (I-GV)^{-1} G(V' - V)  (I - GV')^{-1}.
\end{equation}
Applying this operator equation to the constant function $1$ and using that \eqref{eq:CentralEqualityGamma} holds for both $V$ and $V'$ readily implies \eqref{eq:PerturbationFormula}. 
Next, we will prove \eqref{eq:PerturbationTestedAgainstV}. Upon multiplication of \eqref{eq:PerturbationFormula} with $V$ and summation over $x \in \bbZ^d$, we see that
\begin{equation}
\langle V, \gamma_V' \rangle_{\bbZ^d} - \langle V, \gamma_V \rangle_{\bbZ^d} = \langle V, (I-GV)^{-1} G(V' - V) \gamma_{V'} \rangle_{\bbZ^d}.
\end{equation}
 Since $\| G|V| \|_\infty < 1$, we can expand the sum $(I - GV)^{-1}$ into a series and rewrite the last expression as
 \begin{equation}
 \begin{split}
 \sum_{n = 0}^\infty \langle 1, V (GV)^n G (V' &- V)\gamma_{V'} \rangle_{\bbZ^d}  = \sum_{n = 1}^\infty \langle 1, (VG)^n (V' - V) \gamma_{V'} \rangle_{\bbZ^d} \\
 & = \langle ((I - GV)^{-1} - I)1, (V' - V) \gamma_{V'} \rangle_{\bbZ^d} \\
 & = \langle \gamma_V - 1, (V' - V) \gamma_{V'} \rangle_{\bbZ^d}.
 \end{split} 
 \end{equation}
 where we used that both $V$ and $G$ are symmetric operators.
 The claim follows easily by rearranging the terms. We finally turn to the proof of \eqref{eq:PerturbationExplicit}. From the perturbation identity \eqref{eq:PerturbationFormula}, we can conclude that
\begin{equation}
(I  - (I - GV)^{-1} G (V' - V) ) \gamma_{V'} = \gamma_V.
\end{equation}
If \eqref{eq:ConditionsGV2} holds, the operator acting on $\gamma_{V'}$ in the above equation has a bounded inverse $(I - (I - GV)^{-1} G (V' - V) )^{-1}$ and therefore, \eqref{eq:PerturbationFormula} follows. 

\end{proof}

In our main application, the perturbation of a potential $V$ is of a certain size $\delta > 0$, and it will be of interest to control deviations of the occupation-time profile from its expectation in terms of powers in $\delta$. The following corollary will be helpful in the proof of the main Theorem \ref{thm:MainResult} (more precisely in Proposition~\ref{PropOccTimeBounds}) of this article. In essence, it follows from combining the result of Lemma \ref{Lemma_Gauge} with a well-known formula for the Laplace functional of the occupation-time measure of random interlacements. 
\begin{corollary}
\label{Cor_OccTime}
Let $V, \eta : \bbZ^d \rightarrow \bbR$ be functions vanishing outside a finite set and $\delta > 0$ such that, with $V' = V + \delta \eta$, \eqref{eq:ConditionsGV} and \eqref{eq:ConditionsGV2} are both fulfilled. Then, for any $t\in\bbR$, 
\begin{equation}
\label{eq:ClaimCorollary}
\bbP\Big[\langle \cL_{u}, V'\rangle_{\bbZ^d} \geq u\langle V', \gamma_V^2 \rangle_{\bbZ^d} + t\Big] \leq \exp\Big(- u \cE_{\bbZ^d}(\gamma_V -1) - t + u\cR_{\delta,\eta,V} \Big),
\end{equation}
where $\cL_{u} := \sum_{x \in \bbZ^d} L_{x,u} \delta_x = \scrL_{1,u}$ and 
\begin{equation}
\cR_{\delta,\eta,V} = \langle|\eta|,1\rangle_{\bbZ^d} \| \gamma_V \|_\infty^2  \frac{\delta^2 \| (I - G|V|)^{-1} G |\eta| \|_\infty }{ 1 - \delta \| (I - G|V|)^{-1} G |\eta| \|_\infty}.
\end{equation}
\end{corollary}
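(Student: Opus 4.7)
The plan is to apply the exponential Chebyshev inequality to $\langle\cL_u,V'\rangle_{\bbZ^d}$, use the closed form~\eqref{eq:laplacetransform} for its Laplace transform, and then exploit Lemma~\ref{Lemma_Gauge} to compare the resulting cumulant $u\langle V',\gamma_{V'}\rangle_{\bbZ^d}$ with the term $u\langle V',\gamma_V^2\rangle_{\bbZ^d}$ appearing in~\eqref{eq:ClaimCorollary}. Concretely, Markov's inequality combined with~\eqref{eq:laplacetransform} gives, for every $s\in\bbR$,
$$\bbP\big[\langle\cL_u,V'\rangle_{\bbZ^d}\geq s\big]\leq \exp\big(-s+u\langle V',\gamma_{V'}\rangle_{\bbZ^d}\big),$$
so taking $s=u\langle V',\gamma_V^2\rangle_{\bbZ^d}+t$ reduces the claim to the deterministic estimate
$$\langle V',\gamma_{V'}\rangle_{\bbZ^d}-\langle V',\gamma_V^2\rangle_{\bbZ^d}\leq -\cE_{\bbZ^d}(\gamma_V-1)+\cR_{\delta,\eta,V}.$$

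To produce the Dirichlet form on the right-hand side, I would first observe that, since $\gamma_V-1=GV\gamma_V$ by~\eqref{eq:CentralEqualityGamma} and since $(I-P)G=I$ with $P$ the discrete averaging operator (so that $\cE_{\bbZ^d}(f,g)=\langle f,(I-P)g\rangle_{\bbZ^d}$ by a short computation from~\eqref{eq:discreteDF}),
$$\cE_{\bbZ^d}(\gamma_V-1)=\langle\gamma_V-1,(I-P)(\gamma_V-1)\rangle_{\bbZ^d}=\langle\gamma_V-1,V\gamma_V\rangle_{\bbZ^d}=\langle V,\gamma_V^2\rangle_{\bbZ^d}-\langle V,\gamma_V\rangle_{\bbZ^d}.$$
Plugging the perturbation identity~\eqref{eq:PerturbationTestedAgainstV} with $V'-V=\delta\eta$ into the left-hand side of the target estimate, together with the trivial decomposition $\langle V',\gamma_V^2\rangle_{\bbZ^d}=\langle V,\gamma_V^2\rangle_{\bbZ^d}+\delta\langle\eta,\gamma_V^2\rangle_{\bbZ^d}$, then yields
$$\langle V',\gamma_{V'}\rangle_{\bbZ^d}-\langle V',\gamma_V^2\rangle_{\bbZ^d}=-\cE_{\bbZ^d}(\gamma_V-1)+\delta\langle\eta\gamma_V,\gamma_{V'}-\gamma_V\rangle_{\bbZ^d},$$
so it remains to control the remainder $\delta\langle\eta\gamma_V,\gamma_{V'}-\gamma_V\rangle_{\bbZ^d}$ by $\cR_{\delta,\eta,V}$.

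For this last step I would expand, via~\eqref{eq:PerturbationExplicit}, $\gamma_{V'}-\gamma_V=\sum_{n\geq 1}\delta^n\,T^n\gamma_V$ with $T=(I-GV)^{-1}G\eta$, and use the Neumann-series expansion of $(I-GV)^{-1}$ to obtain the pointwise domination $|Tf|\leq T'|f|$ with the non-negative-kernel operator $T'=(I-G|V|)^{-1}G|\eta|$. Under~\eqref{eq:ConditionsGV2}, which in the operator-norm convention $\|GV\|_{L^\infty\to L^\infty}=\|G|V|\|_\infty$ used after~\eqref{eq:laplacetransform} reads $\delta\|T'\|_\infty<1$, summing the resulting geometric series gives $\|\gamma_{V'}-\gamma_V\|_\infty\leq\|\gamma_V\|_\infty\,\delta\|T'\|_\infty/(1-\delta\|T'\|_\infty)$, and bounding one factor of $\gamma_V$ by $\|\gamma_V\|_\infty$ in the scalar product, together with $|\langle\eta,f\rangle_{\bbZ^d}|\leq\langle|\eta|,1\rangle_{\bbZ^d}\|f\|_\infty$, produces exactly the required bound by $\cR_{\delta,\eta,V}$. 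The main obstacle is the identification in the middle paragraph: writing $\langle V',\gamma_{V'}\rangle_{\bbZ^d}-\langle V',\gamma_V^2\rangle_{\bbZ^d}$ in a form where the precise quantity $-\cE_{\bbZ^d}(\gamma_V-1)$ appears with the correct sign relies on the symmetrised resolvent identity~\eqref{eq:PerturbationTestedAgainstV} together with the self-adjointness of $G$, and one must carefully keep track of which occurrence of $\gamma$ has been perturbed and which has not.
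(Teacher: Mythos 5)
Your proposal is correct and follows essentially the same route as the paper's proof: exponential Markov plus the Laplace transform \eqref{eq:laplacetransform}, the perturbation identities \eqref{eq:PerturbationTestedAgainstV} and \eqref{eq:PerturbationExplicit} with a Neumann-series/geometric-series bound yielding exactly $\cR_{\delta,\eta,V}$, and the identity $\langle V,\gamma_V^2\rangle_{\bbZ^d}-\langle V,\gamma_V\rangle_{\bbZ^d}=\cE_{\bbZ^d}(\gamma_V-1)$. The only (cosmetic) difference is that you derive this last identity from $\gamma_V-1=GV\gamma_V$ and $(I-P)G=I$, whereas the paper uses $(\Delta+V)\gamma_V=0$ together with summation by parts -- the same fact in a slightly different dress.
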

\begin{proof}
By the exponential Markov inequality, one has
\begin{equation}
\label{eq:MainBoundLemma}
\begin{split}
\bbP[\langle & \cL_{u}, V ' \rangle_{\bbZ^d}  \geq u\langle V', \gamma_V^2  \rangle_{\bbZ^d} + t] \\
& \leq \exp\big( - u\langle V', \gamma_V^2  \rangle_{\bbZ^d} - t \big) \bbE\big[ \exp( \langle \cL_{u}, V'\rangle_{\bbZ^d} ) \big].
\end{split}
\end{equation}
By~\eqref{eq:laplacetransform} (see also Theorem 2.1 of \cite{sznitman2012random}) and since we assumed  $\| G |V'|  \|_\infty < 1$, the expectation can be written as
\begin{equation}
\bbE\big[ \exp( \langle \cL_{u}, V'\rangle_{\bbZ^d} ) \big] = \exp \Big( u \langle V', (I - GV')^{-1} 1 \rangle_{\bbZ^d} \Big) = \exp\big( u \langle V', \gamma_{V'}\rangle_{\bbZ^d} \big). 
\end{equation}
Now, we see that since $V' - V = \delta \eta$, 
\begin{equation}
\begin{split}
\langle V', \gamma_{V'}\rangle_{\bbZ^d} & \stackrel{\eqref{eq:PerturbationTestedAgainstV}}{=} \langle V, \gamma_{V} \rangle_{\bbZ^d} + \delta \langle \eta \gamma_V, \gamma_{V'} \rangle_{\bbZ^d}.
\end{split}
\end{equation}

Since the assumption \eqref{eq:ConditionsGV2} is fulfilled, we can insert \eqref{eq:PerturbationExplicit} into the above equation
\begin{equation}
\label{eq:VGammaV_prime}
\langle V', \gamma_{V'} \rangle_{\bbZ^d} = \langle V, \gamma_V \rangle_{\bbZ^d} + \delta \langle \eta \gamma_V, (I - \delta(I-GV)^{-1} G \eta)^{-1} \gamma_V \rangle_{\bbZ^d}.
\end{equation}

By construction, we have a natural ordering in terms of powers in $\delta$ of the right-hand side, and using again \eqref{eq:ConditionsGV2}, we arrive at 
\begin{equation}
\langle V', \gamma_{V'} \rangle_{\bbZ^d} = \langle V, \gamma_V \rangle_{\bbZ^d} + \delta \langle \eta, \gamma_V^2 \rangle_{\bbZ^d} + \delta \sum_{n \geq 1} \langle \eta \gamma_V, (\delta (I - GV)^{-1} G\eta)^n \gamma_V \rangle_{\bbZ^d}.
\end{equation}
The absolute value of the sum can be bounded as follows:
\begin{equation}
\label{eq:HigherOrderTerms}
\begin{split}
\bigg\vert  \sum_{n \geq 1} \langle \eta \gamma_V, & (\delta (I - GV)^{-1} G\eta)^n \gamma_V \rangle_{\bbZ^d} \bigg\vert \\
 & \leq \sum_{n \geq 1} \langle| \eta |,1\rangle_{\bbZ^d} \| \gamma_V \|_\infty^2  \Big(\delta \| (I - G|V|)^{-1} G |\eta| \|_\infty\Big)^n \\
& =  \langle| \eta |,1\rangle_{\bbZ^d} \| \gamma_V \|_\infty^2  \frac{\delta \| (I - G|V|)^{-1} G |\eta| \|_\infty }{ 1 - \delta \| (I - G|V|)^{-1} G| \eta| \|_\infty}.
\end{split}
\end{equation}
Collecting \eqref{eq:MainBoundLemma}--\eqref{eq:HigherOrderTerms}, we arrive at
\begin{equation}
\label{eq:penultimate_step}
\bbP[\langle \cL_{u}, V ' \rangle_{\bbZ^d} \geq u\langle V', \gamma_V^2  \rangle_{\bbZ^d} + t] \leq \exp\bigg(- u \langle V, \gamma_V^2 - \gamma_V \rangle_{\bbZ^d} - t + u\cR_{\delta,\eta,V}  \bigg).
\end{equation}
To conclude, we are left with showing that the scalar product can be rewritten as a Dirichlet form. To do this, we note that $(\Delta + V)\gamma_V = 0$ (with $\Delta$ the discrete Laplacian), as can be shown explicitly by using the definition of $\gamma_V$ \eqref{eq:GaugeFunction} and expanding $(I-GV)^{-1}$. For $V$ not zero everywhere, $\gamma_V > 0$, one therefore has $V = - \frac{\Delta \gamma_V}{\gamma_V}$ and thus
\begin{equation}
\label{eq:DirichletFormGammaV}
\begin{split}
\langle V, \gamma_V^2 \rangle_{\bbZ^d} - \langle V, \gamma_V \rangle_{\bbZ^d} & = \langle - \Delta \gamma_V, \gamma_V \rangle_{\bbZ^d} - \langle - \Delta \gamma_V, 1 \rangle_{\bbZ^d} \\
& =  \langle - \Delta \gamma_V, \gamma_V - 1 \rangle_{\bbZ^d} \\
& = \langle - \Delta (\gamma_V - 1), \gamma_V - 1 \rangle_{\bbZ^d} \\
& = \cE_{\bbZ^d}(\gamma_V - 1, \gamma_V - 1). 
\end{split}
\end{equation}
In the last step, we used the summation by parts formula. The formula \eqref{eq:DirichletFormGammaV} also holds trivially if $V$ is identical to zero. By inserting \eqref{eq:DirichletFormGammaV} into \eqref{eq:penultimate_step}, the claim of the Corollary follows.
\end{proof}

\begin{remark}
\label{Remark_CalculationGamma}

\begin{enumerate}
\item For the application that we have in mind (cf.\ Proposition~\ref{PropOccTimeBounds}), it will be crucial that the remainder term $\cR_{\delta,\eta, V}$ is of order $\delta^2$ as $\delta \rightarrow 0$ if $\|(I-G|V|)^{-1} G |\eta| \|_\infty$ and $\| \gamma_V \|_\infty$ stay bounded away from zero and infinity over the class of possible $\eta$ and $V$ that we are interested in. In fact,  the main contribution in terms of $\delta$ in the exponential in~\eqref{eq:ClaimCorollary} will come in the term $t$ and will be linear in $\delta$.

\item In the situation $V = ae_C$ with $C \subseteq \bbZ^d$ finite and non-empty and $a \in (0,1)$, one has $\gamma_{ae_C}(x) = 1+ \frac{a}{1-a} h_C(x)$. To see this, we remark that $G|V| = a h_C < 1$, thus we have
\begin{equation}
\begin{split}
\gamma_{ae_C}(x) & = (1 - GV)^{-1}1 (x)  = \sum_{n = 0}^\infty a^n (Ge_C)^n 1(x) \\
& = 1 + \sum_{n = 1}^\infty a^n(Ge_C)^{n-1} \underbrace{(G e_C) (x)}_{ = h_C(x) ( = 1 \text{ on }C)} = 1 + \frac{a}{1-a}h_C(x).
\end{split}
\end{equation}
\end{enumerate}
\end{remark}

\section{Entropic push of the occupation-time field by disconnection}\label{sec:entropicpush}

In this section we prove our main result, namely Theorem~\ref{thm:MainResult}. It states an asymptotic upper bound on the joint occurrence of the \emph{disconnection event} $\cD^u_N$ and the event that, for a fixed $R>0$ so that $[-M,M]^d\subseteq  B_R$, the $d_R$-distance (see~\eqref{Distance} below)
between the random measure $\scrL_{N,u}$ (the scaled occupation-time measure of random interlacements) and $\scrM^u_{\mathring{A}}(x)\De x$  becomes large. 
Informally, $d_R$ is a metric that measures the distance between two non-negative measures on $B_R$ by comparing on one hand the 1-Wasserstein distance between their versions normalized to one and on the other hand their total masses. 

If the critical levels $\overline{u}$, $u_\ast$ and $u_{\ast\ast}$ all coincide and if $A$ is regular in the sense that $\capa(\mathring{A}) = \capa(A)$, we furthermore obtain Corollary \ref{thm:CorollaryPush}, which can be roughly interpreted as follows: given disconnection, the occupation-time field of random interlacements is pinned with high probability around a local level equal to $\scrM^u_{A}(\tfrac{x}{N}) = (\sqrt{u} + (\sqrt{\overline{u}} - \sqrt{u}) \scrh_A(\tfrac{x}{N}))^2$. These results and the methods used in the proof are similar in spirit to corresponding ones in the case of level-set percolation of the Gaussian free field (see Section 4 of~\cite{chiarini2018entropic}).
\vspace{\baselineskip}

Before stating the main Theorem, we recall the notion of the $1$-Wasserstein distance and define precisely the metric $d_R$. For $\emptyset \neq J \subseteq \bbR^d$, and a function $\eta: J \rightarrow \bbR$, we denote the Lipschitz constant by
\begin{equation}
\lip(\eta) = \sup_{x,y \in J, x \neq y} \frac{|\eta(x) - \eta(y)|}{|x - y|} \in [0, \infty].
\end{equation}
 Moreover, we define the function space 
\begin{equation}
\label{eq:FunctionClass}
\text{Lip}_1(J) = \big\lbrace \eta : J \rightarrow \bbR ; \ \lip(\eta) \leq 1 \big\rbrace.
\end{equation} 
The $1$-Wasserstein distance (also known under the name of Kantorovich--Rubinstein distance) between \emph{probability measures} $P,Q$ on $J$ is  defined by 
\begin{equation}
    \label{Wasserstein}
    d_{W,J}(P,Q):= \sup_{\eta\in \lip_1(J)} \bigg\{ \int \eta\, \De P- \int \eta\, \De Q\bigg\}.
\end{equation}
It is known that if $J$ is compact, $d_{W,J}$ metrizes weak convergence on the space of probability measures on $J$ (see for example~\cite{villani2008optimal}, Theorem 6.9). 

Our goal is to compare the non-negative measures $\scrL_{N,u}$ and $\scrM_{\mathring{A}}^u(x)\De x$ which in general do not have finite mass on $\bbR^d$. Therefore, we will restrict these measures to arbitrary large boxes $B_R = [-R,R]^d$, with side lengths $2R>0$, and compare both their masses on $B_R$ and their normalized versions on $B_R$ (the choice of a box, i.e.~a ball in sup-norm, instead of a ball in Euclidean norm is not essential, but it will simplify certain geometrical arguments later). More precisely, fix $R>0$ and define
 for non-negative Radon measures $\mu,\nu$ on $B_R$ the distance
\begin{equation}
    \label{Distance}
    d_R(\mu,\nu):= \begin{cases}
        |\mu(B_R)-\nu(B_R)| + d_{W,B_R}\Big(\tfrac{\mu}{\mu(B_R)},\tfrac{\nu}{\nu(B_R)}\Big), & \mbox{if $\mu,\nu\neq 0$,}\\
       0, & \mbox{if $\mu=\nu=0$,}\\
       \infty, & \mbox{otherwise.}
    \end{cases} 
\end{equation}
For simplicity, we write $d_R(\mu,f)$ instead of $d_R(\mu,\nu)$ if $f$ is the Lebesgue density of $\nu$. We are now ready to state the main result of this article. Recall that, according to our convention at the end of Section~\ref{sec:introduction}, all numbered constants are assumed to be positive.

\begin{theorem}\label{thm:MainResult}
Let $\Delta > 0$, $u \in (0,\overline{u})$ and $R>0$ so that $[-M,M]^d\subseteq B_R$. Then,
\begin{equation}
\begin{split}
\label{eq:MainResultSection4}
\limsup_{N } \frac{1}{N^{d-2}} \log \bbP \bigg[ & d_R(\scrL_{N,u}, \scrM^u_{\mathring{A}}) \geq \Delta ; \cD^u_N \bigg] \\
& \leq - \frac{1}{d} (\sqrt{\overline{u}} - \sqrt{u})^2 \capa(\mathring{A}) - c_1(\Delta, R, A,u).
\end{split}
\end{equation}
Moreover, as $u \rightarrow 0$ one has $c_1(\Delta, R, A,u) \sim c_2(\Delta, R, A)\sqrt{u}$.
\end{theorem}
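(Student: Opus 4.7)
The plan is to upgrade the coarse-graining and solidification proof of the disconnection upper bound \eqref{eq:UpperBoundInterlacements} from \cite{nitzschner2017solidification} by layering on top of it a perturbative Laplace-transform argument based on Corollary~\ref{Cor_OccTime}, in close analogy with the Gaussian free-field analysis of \cite{chiarini2018entropic}. As a first reduction, by the Kantorovich--Rubinstein dual representation of the 1-Wasserstein distance combined with the mass comparison in \eqref{Distance}, the event $\{d_R(\scrL_{N,u},\scrM^u_{\mathring A})\geq \Delta\}$ is contained in a union, of cardinality subexponential in $N^{d-2}$, of linear-deviation events of the form $\{|\langle\scrL_{N,u},\eta\rangle - \langle\scrM^u_{\mathring A},\eta\rangle| \geq \Delta'\}$, with $\eta$ ranging over a finite $\epsilon$-net in $\mathrm{Lip}_1(B_R)$ (supplemented by constant test functions, to absorb the total-mass term). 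A union bound fixes one such $\eta$ and a sign.

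\textbf{Coarse-graining and fence.} I would next introduce three nested length scales $K_0 \ll L_0 \ll L_1 \ll N$, chosen as appropriate powers of $N$, and declare an $L_0$-box \emph{good} if, inside a slightly enlarged concentric box, the vacant set at level $u$ displays the strongly percolative unique-giant-cluster picture available for $u < \overline u$. Standard sprinkling and decoupling estimates (cf.~\cite{sznitman2015disconnection,nitzschner2017solidification}) give $\bbP[B \text{ bad}] \leq \exp(-cL_0^{d-2})$. On $\cD^u_N$ a segmentation argument produces a collection $\cC$ of bad $L_0$-boxes whose $N$-rescaled union $\Sigma_\cC$ lies in the porous-interface class $\mathscr{S}_{U_0,\varepsilon,\eta_0}$ for a suitable $U_0 \in \cU_{\ell_*, \mathring A}$, with the number of skeletons being $\exp(o(N^{d-2}))$, so that a union bound over $\cC$ is cheap. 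For each fixed $\cC$, one constructs a discrete fence $\cF_N\subseteq \bbZ^d$ supported on the boxes of $\cC$ whose discrete capacity and harmonic potential approximate, after rescaling by $N$, those of $\Sigma_\cC$.

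\textbf{Joint Laplace bound.} For fixed $\cC$ and $\eta$, I would apply Corollary~\ref{Cor_OccTime} with base potential $V = a\,e_{\cF_N}$ and perturbation $\delta\,\widetilde\eta_N$, with $\widetilde\eta_N(x) = \eta(x/N)$. The parameter $a\in(0,1)$ is chosen so that $a/(1-a) = \sqrt{\overline u /u} - 1$; by Remark~\ref{Remark_CalculationGamma} this makes $\gamma_V = 1 + (\sqrt{\overline u /u} - 1)h_{\cF_N}$, so that $u\gamma_V^2$ approximates the tilted-intensity profile $\scrM^u_{\mathring A}(\cdot/N)$. A direct calculation then shows that the event $\cD^u_N \cap \{\langle\scrL_{N,u},\eta\rangle \geq \langle\scrM^u_{\mathring A},\eta\rangle + \Delta'\}$ implies $\langle\cL_u,V'\rangle_{\bbZ^d} \geq u\langle V',\gamma_V^2\rangle_{\bbZ^d} + \delta N^d \Delta'(1-o(1))$: the first contribution uses that disconnection forces trajectories to pile up on the fence to the extent that $\langle\cL_u, ae_{\cF_N}\rangle_{\bbZ^d}$ exceeds the tilted mean $u\langle ae_{\cF_N}, \gamma_V^2\rangle_{\bbZ^d} = a\overline u\,\capa_{\bbZ^d}(\cF_N)$, while the second contribution comes from the linear-deviation assumption on $\eta$. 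Corollary~\ref{Cor_OccTime} then yields
\begin{equation*}
\bbP\bigl[\cD^u_N \cap \{\langle\scrL_{N,u},\eta\rangle \geq \langle\scrM^u_{\mathring A},\eta\rangle + \Delta'\}\bigr] \leq \exp\bigl(-u\cE_{\bbZ^d}(\gamma_V-1) - \delta N^d \Delta' + u\cR_{\delta,\widetilde\eta_N,V}\bigr).
\end{equation*}
The identity $\cE_{\bbZ^d}(\gamma_V-1) = (\sqrt{\overline u/u}-1)^2 \capa_{\bbZ^d}(\cF_N)$, combined with $\capa_{\bbZ^d}(\cF_N) \approx \tfrac{1}{d}\capa(\Sigma_\cC)N^{d-2}$ and with the solidification inputs \eqref{eq:SolidificationEstimate} and \eqref{eq:SolidificationDirichletEnergy}, recovers the leading capacity cost $\tfrac{1}{d}(\sqrt{\overline u}-\sqrt u)^2 \capa(\mathring A) N^{d-2}$. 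A scaling analysis shows $u\cR_{\delta,\widetilde\eta_N,V} = O(\delta^2 N^{d+2})$, so taking $\delta \asymp N^{-2}$ balances the gain $-\delta N^d\Delta'$ against the error, both of order $N^{d-2}$; optimizing the prefactor in $\delta$ produces the extra penalty $c_1(\Delta, R, A, u) N^{d-2}$. Careful tracking of $u$-dependent prefactors in $\|\gamma_V\|_\infty^2\asymp \overline u/u$ and in the weight $\gamma_V^2$ against which $\eta$ is tested yields the announced $c_1 \sim c_2(\Delta, R, A)\sqrt u$ in the small-$u$ limit.

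\textbf{Main obstacle.} The delicate point, as in \cite{chiarini2018entropic}, is to arrange the perturbation so that it genuinely produces an \emph{additional} exponential gain rather than eroding the leading capacity term. This requires carefully tracking the first-order-in-$\delta$ expansion of $\langle V', \gamma_{V'}\rangle_{\bbZ^d}$ embodied in the proof of Corollary~\ref{Cor_OccTime} and exploiting \eqref{eq:SolidificationDirichletEnergy} to replace $h_{\cF_N}$ by $\scrh_{\mathring A}$ in all relevant pairings up to $o(1)$, so that the splitting of the Dirichlet-energy cost from the linear deviation cost is clean. A secondary technical hurdle is verifying the hypotheses \eqref{eq:ConditionsGV} and \eqref{eq:ConditionsGV2} uniformly in $N$, in the skeleton $\cC$ and in the direction $\eta$ --- concretely $\|G|V|\|_\infty \leq a < 1$ and $\|(I-G|V|)^{-1}G|\delta\widetilde\eta_N|\|_\infty = O(\delta N^2) \to 0$ at the scaling $\delta\asymp N^{-2}$ --- together with showing that the exceptional skeletons $\cC$ with large continuum Dirichlet gap $\cE(\scrh_{\mathring A}-\scrh_{\Sigma_\cC})$ carry only $\exp(-\Omega(N^{d-2}))$ probability mass, so that the union bound can be restricted to "typical" interfaces where the replacement above is valid.
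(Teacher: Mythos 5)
Your overall architecture (reduction to finitely many test functions, coarse-graining with $\exp(o(N^{d-2}))$ complexity, a good/bad dichotomy of interfaces controlled by \eqref{eq:SolidificationEstimate}--\eqref{eq:SolidificationDirichletEnergy}, and the perturbative Laplace bound of Corollary~\ref{Cor_OccTime} with effective perturbation strength $\delta N^{-2}$ and $\delta\lesssim\sqrt{u}$, which is where $c_1\sim c_2\sqrt u$ comes from) matches the paper's proof. But there is a genuine gap at the heart of your coarse-graining step: you build the fence $\cC$ out of \emph{bad} $L_0$-boxes (boxes where the strongly percolative picture for $\cV^u$ fails) and invoke $\bbP[B\text{ bad}]\leq e^{-cL_0^{d-2}}$. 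In the actual argument the blocking interface consists of boxes that are good$(\alpha,\beta,\gamma)$ \emph{and} carry atypically many excursions, $N_u(D_z)\geq\beta\capa_{\bbZ^d}(D_z)$; the bad boxes are discarded wholesale through the event $\cB_N$, whose probability is superexponentially small at scale $N^{d-2}$. This distinction is not cosmetic: the sharp constant $\tfrac1d(\sqrt{\overline u}-\sqrt u)^2\capa(\mathring A)$ cannot come from per-box badness probabilities (which carry an unrelated constant and would require sprinkling to decouple), and, more importantly, your key claim in the joint Laplace step --- that disconnection forces $\langle\cL_u, a e_{\cF_N}\rangle_{\bbZ^d}$ to exceed (essentially) the tilted mean $a\overline u\,\capa_{\bbZ^d}(\cF_N)$ --- has no justification if $\cF_N$ is made of bad boxes, since local badness says nothing about occupation times. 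The pile-up on the fence comes precisely from the high excursion counts on \emph{good} interface boxes, via $\langle\cL_u,e_{D_z}\rangle_{\bbZ^d}\geq\gamma\capa_{\bbZ^d}(D_z)$ (cf.\ \eqref{eq:occ_of_D} and (4.9) of \cite{sznitman2017disconnection}), and only at level $\gamma<\overline u$, with $\gamma\uparrow\overline u$ taken at the very end; your choice $a/(1-a)=\sqrt{\overline u/u}-1$ with a pile-up directly at level $\overline u$ skips this intermediate-parameter structure.

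Two smaller points. First, replacing $\scrM^u_{\mathring A}$ by the discrete profile $u\gamma_V^2=\cM^u_C$ in the pairing with $\eta$, uniformly over good skeletons, is not a consequence of \eqref{eq:SolidificationDirichletEnergy} alone: that estimate compares continuum Dirichlet energies, so you additionally need a discrete-to-continuum comparison of $h_C$ with $\scrh_\Sigma$ (the paper uses the Einmahl/KMT-type strong coupling and gradient estimates for harmonic functions) and the energy Cauchy--Schwarz inequality \eqref{eq:CSEnergies} to convert Dirichlet-form closeness into closeness of the linear functionals tested against $\eta$. Second, your statement that skeletons with large Dirichlet gap ``carry only $\exp(-\Omega(N^{d-2}))$ probability mass'' is too weak (every $\cD_{N,\kappa}$ does); what is needed, and what \eqref{eq:SolidificationDirichletEnergy} together with the capacity lower bound delivers, is that for such $\kappa$ the cost of $\cD_{N,\kappa}$ alone exceeds the leading term by a fixed positive amount, so that the deviation event need not even be used there. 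Your $\epsilon$-net reduction is a workable alternative to the paper's mollification/location-family step, but note it also requires an a priori exponential control of the random mass $\scrL_{N,u}(B_R)$ to absorb the net-approximation error, which is the same Laplace-transform estimate the paper performs in its Step 1.
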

This result should be compared to Theorem 4.1 of \cite{chiarini2018entropic}. In particular, note that the same explanation as around (4.5) of the same reference assures the measurability of the event under the probability in \eqref{eq:MainResultSection4}. The following Corollary gives the interpretation of an `entropic push' alluded to above in the case that the critical levels  $\overline{u}$, $u_\ast$ and $u_{\ast\ast}$ coincide (see Section~\ref{sec:introduction} and the references therein for the definition of these levels). 

\begin{corollary}\label{thm:CorollaryPush}
Let $\Delta, u, R$ be as in Theorem \ref{thm:MainResult} and suppose that $A$ is regular in the sense that $\capa(\mathring{A}) = \capa(A)$. If the critical levels $\overline{u}$, $u_\ast$ and $u_{\ast\ast}$ coincide, one has 
\begin{equation}
\limsup_{N } \frac{1}{N^{d-2}} \log \bbP\bigg[d_R(\scrL_{N,u}, \scrM^u_{A}) \geq \Delta | \cD^u_N\bigg] \leq -c_1(\Delta,R,A,u). 
\end{equation}
\end{corollary}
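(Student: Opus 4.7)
The plan is to deduce the Corollary directly from Theorem~\ref{thm:MainResult} by combining it with the disconnection lower bound~\eqref{eq:LowerBoundInterlacements} from~\cite{li2014lowerBound}. I first rewrite the conditional probability as the ratio
\[ \bbP\bigl[d_R(\scrL_{N,u}, \scrM^u_A) \geq \Delta \,\big|\, \cD^u_N\bigr] = \frac{\bbP\bigl[d_R(\scrL_{N,u}, \scrM^u_A) \geq \Delta ;\, \cD^u_N\bigr]}{\bbP[\cD^u_N]}, \]
and estimate numerator and denominator separately on the exponential scale $N^{d-2}$.

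For the numerator I would exploit the regularity hypothesis $\capa(\mathring{A}) = \capa(A)$: by standard potential theory (using $\mathring{A}\subseteq A$, the monotonicity $\scrh_{\mathring{A}}\leq \scrh_A$ and the fact that the Brownian equilibrium measures of $A$ and of $\mathring{A}$ have equal total mass), one obtains $\scrh_{\mathring{A}} = \scrh_A$ quasi-everywhere, hence Lebesgue-almost-everywhere on $\bbR^d$. Since $\scrM^u_B(x)\,\De x$ depends on $B$ only through the Lebesgue-a.e.\ values of $\scrh_B$, the Radon measures $\scrM^u_A(x)\,\De x$ and $\scrM^u_{\mathring{A}}(x)\,\De x$ coincide on $B_R$, so that $d_R(\scrL_{N,u}, \scrM^u_A) = d_R(\scrL_{N,u}, \scrM^u_{\mathring{A}})$. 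Theorem~\ref{thm:MainResult} then supplies
\[ \limsup_{N} \tfrac{1}{N^{d-2}} \log \bbP\bigl[d_R(\scrL_{N,u}, \scrM^u_A) \geq \Delta ;\, \cD^u_N\bigr] \leq -\tfrac{1}{d}(\sqrt{\overline{u}}-\sqrt{u})^2 \capa(\mathring{A}) - c_1(\Delta,R,A,u). \]

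For the denominator I would invoke~\eqref{eq:LowerBoundInterlacements}, which gives
\[ \liminf_{N} \tfrac{1}{N^{d-2}} \log \bbP[\cD^u_N] \geq -\tfrac{1}{d}(\sqrt{u_{\ast\ast}} - \sqrt{u})^2 \capa(A). \]
Under the coincidence $\overline{u} = u_\ast = u_{\ast\ast}$ combined with regularity $\capa(A) = \capa(\mathring{A})$, the right-hand side equals precisely $-\tfrac{1}{d}(\sqrt{\overline{u}} - \sqrt{u})^2 \capa(\mathring{A})$, matching the leading capacitary term in the numerator estimate. Taking $\tfrac{1}{N^{d-2}}\log$ of the ratio and passing to $\limsup$ (so that $\limsup$ acts on the numerator and $-\liminf$ on the denominator), the two principal orders cancel and only the correction $-c_1(\Delta,R,A,u)$ survives, which is exactly the assertion of the corollary. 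I do not anticipate any genuine obstacle: the substantive work is entirely contained in Theorem~\ref{thm:MainResult}, and the corollary merely records that, once the principal exponential rates for disconnection match, the estimate of Theorem~\ref{thm:MainResult} upgrades automatically into a statement about the conditional probability.
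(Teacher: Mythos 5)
Your proposal is correct and follows essentially the same route as the paper: replace $\scrM^u_A$ by $\scrM^u_{\mathring{A}}$ using $\capa(\mathring{A})=\capa(A)$ (hence $\scrh_{\mathring{A}}=\scrh_A$ Lebesgue-a.e.), bound the conditional probability by the $\limsup$ of the joint event from Theorem~\ref{thm:MainResult} minus the $\liminf$ from~\eqref{eq:LowerBoundInterlacements}, and note that under $\overline{u}=u_\ast=u_{\ast\ast}$ the capacitary terms cancel, leaving $-c_1(\Delta,R,A,u)$.
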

\begin{proof}
First, we remark that $\capa(\mathring{A}) = \capa(A)$ implies that $\scrh_{\mathring{A}} = \scrh_A$ Lebesgue-a.e., see e.g.~below (3.3) of \cite{chiarini2018entropic}. Therefore the measures  $\scrM^u_{\mathring{A}}(x)\De x$ and $\scrM^u_{A}(x)\De x$ coincide,  and it holds that
\begin{equation}
\begin{split}
&\limsup_{N } \frac{1}{N^{d-2}} \log \bbP\bigg[d_R(\scrL_{N,u}, \scrM^u_{A}) \geq \Delta | \cD^u_N\bigg]  \\ 
&  \leq \limsup_{N } \frac{1}{N^{d-2}} \log \bbP \bigg[ d_R(\scrL_{N,u}, \scrM^u_{\mathring{A}}) \geq \Delta ; \cD^u_N \bigg] - \liminf_{N } \frac{1}{N^{d-2}} \log \bbP[\cD^u_N].
\end{split}
\end{equation}
Combining \eqref{eq:MainResultSection4} with the lower bound on the disconnection probability \eqref{eq:LowerBoundInterlacements} readily proves the claim. 
\end{proof}

The distance $d_R$ is not the only natural metric on the space of non-negative Radon measures on $B_R$. An alternative choice is given for example by the \emph{bounded Lipschitz distance} on $B_R$, which is defined as
\begin{equation}
    \label{bLDistance}
    d_{BL,R}(\mu,\nu) := \sup_{\substack{\|\eta\|_\infty\leq 1\\ \eta\in \lip_1(B_R)}}  \bigg\{ \int \eta\, \De \mu- \int \eta\, \De \nu\bigg\},
\end{equation} 
where $\mu,\nu$ are non-negative Radon measures on $B_R$. In fact, in the proof of Theorem~\ref{thm:MainResult} we will show~\eqref{eq:MainResultSection4} 
with $d_R$ replaced by $d_{BL,R}$ and no restriction on $R>0$. Theorem~\ref{thm:MainResult} is then deduced via Lemma~\ref{comparisonDistances} below and the fact that when $[-M,M]^d\subseteq B_R$ one has that, on the event $\cD^u_N$, both $\scrL_{N,u}$ and $\scrM_{\mathring{A}}^u(x) \De x$ are positive measures.
\begin{lemma}\label{comparisonDistances} Fix $R>0$ and $\mu,\nu$ positive Radon measures on $B_R$. Then
       \begin{equation}
           \label{eq:comparisonDistances}
           \tfrac{1}{ \mu(B_R)\wedge \nu(B_R) +1} \, d_{BL,R}(\mu,\nu) \leq d_R(\mu,\nu) \leq d_{BL,R} \, (\mu,\nu)  \Big(1 + 2\tfrac{(\sqrt{d}R)\vee 1}{\mu(B_R)\vee\nu(B_R)}\Big).
       \end{equation}
   \end{lemma}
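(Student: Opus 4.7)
Set $m_\mu = \mu(B_R)$ and $m_\nu = \nu(B_R)$, both strictly positive by hypothesis. Both inequalities proceed by confronting the two dualities defining $d_{W,B_R}$ and $d_{BL,R}$, decomposing a tested integral against $\mu-\nu$ into a piece controlled by the Wasserstein distance of the normalized measures and a piece absorbing the mass defect $|m_\mu-m_\nu|$. For the left inequality I fix an arbitrary competitor $\eta\in\lip_1(B_R)$ with $\|\eta\|_\infty\leq 1$ and, letting $\rho\in\{\mu,\nu\}$ denote whichever measure has the larger mass, use the algebraic identity
\[
\int \eta\,\De\mu - \int \eta\,\De\nu \;=\; (m_\mu\wedge m_\nu)\Big(\int \eta\,\De\tfrac{\mu}{m_\mu} - \int \eta\,\De\tfrac{\nu}{m_\nu}\Big) + (m_\mu - m_\nu)\int \eta\,\De\tfrac{\rho}{m_\rho}.
\]
Since $\eta\in\lip_1(B_R)$, the first bracket is bounded by $d_{W,B_R}(\mu/m_\mu,\nu/m_\nu)$, and since $\|\eta\|_\infty\leq 1$ and $\rho/m_\rho$ is a probability measure, the second term is bounded by $|m_\mu-m_\nu|$. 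Pulling out the factor $m_\mu\wedge m_\nu + 1$ using the elementary inequality $ax+y \leq (a+1)(x+y)$ for $a,x,y\geq 0$, and taking the supremum over $\eta$, yields the left bound.

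For the right inequality, the mass part is immediate from testing the constant function $\eta\equiv 1$ against $d_{BL,R}$, giving $|m_\mu-m_\nu|\leq d_{BL,R}(\mu,\nu)$. For the Wasserstein part the key device is a \emph{centering trick}: for any $\eta\in\lip_1(B_R)$ the shifted function $\tilde\eta = \eta - \eta(0)$ is still $1$-Lipschitz, gives the same integrals against differences of probability measures, and satisfies $\|\tilde\eta\|_\infty\leq \sqrt{d}\,R$ on $B_R$ (using $|x|\leq \sqrt d\,|x|_\infty\leq \sqrt d\,R$ there). Assuming without loss of generality that $m_\mu\geq m_\nu$ and writing
\[
\int \tilde\eta\,\De\tfrac{\mu}{m_\mu} - \int \tilde\eta\,\De\tfrac{\nu}{m_\nu} \;=\; \tfrac{1}{m_\mu}\Big(\int \tilde\eta\,\De\mu - \int \tilde\eta\,\De\nu\Big) + \tfrac{m_\nu-m_\mu}{m_\mu}\int \tilde\eta\,\De\tfrac{\nu}{m_\nu},
\]
I will feed $\tilde\eta/((\sqrt d\,R)\vee 1)$ (admissible for $d_{BL,R}$) into the first bracket to bound it by $((\sqrt d\,R)\vee 1)\,d_{BL,R}(\mu,\nu)$, and combine $\|\tilde\eta\|_\infty\leq (\sqrt d\,R)\vee 1$ with the mass bound to control the second by $((\sqrt d\,R)\vee 1)\,d_{BL,R}(\mu,\nu)/m_\mu$. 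Summing, identifying $m_\mu = m_\mu\vee m_\nu$ under the WLOG, and adding the mass contribution produces the claimed coefficient $1 + 2((\sqrt d\,R)\vee 1)/(m_\mu\vee m_\nu)$.

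The main subtlety is that $m_\mu\vee m_\nu$ (not $m_\mu\wedge m_\nu$, which may be much smaller) must appear in the denominator of the right-hand side: this forces both the WLOG choice $m_\mu\geq m_\nu$ and the asymmetric decomposition above, so that the factor $1/m_\mu$ is pulled out \emph{before} any invocation of $d_{BL,R}$. A secondary annoyance is the appearance of $(\sqrt d\,R)\vee 1$ rather than the more natural $\sqrt d\,R$: this handles the small-$R$ regime $\sqrt d\,R<1$, where dividing $\tilde\eta$ by $\sqrt d\,R$ alone would inflate its Lipschitz constant beyond $1$ and make it inadmissible for $d_{BL,R}$.
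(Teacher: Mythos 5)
Your proposal is correct and takes essentially the same route as the paper's proof: the left inequality via the same split of $\int\eta\,\De\mu-\int\eta\,\De\nu$ into $(\mu(B_R)\wedge\nu(B_R))$ times a normalized (Wasserstein-type) difference plus a mass-defect term, and the right inequality via the same centering $\eta\mapsto\eta-\eta(0)$ with rescaling by $(\sqrt{d}R)\vee 1$ to make the test function admissible for $d_{BL,R}$, together with $|\mu(B_R)-\nu(B_R)|\leq d_{BL,R}(\mu,\nu)$ from constant test functions. No gaps; your explicit choice of $\rho$ as the larger-mass measure is just a spelled-out version of the case handling the paper leaves implicit.
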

   \begin{proof}  We start with the simple observation
   \begin{equation}
       \label{eq:totalmasses}
       |\mu(B_R)-\nu(B_R)| \leq \sup_{\substack{\|\eta\|_{\infty}\leq 1\\ \eta\in \lip_1(B_R)}}  \Big\{\int_{B_R} \eta\,\De \mu - \int_{B_R} \eta\,\De \nu \Big\},
   \end{equation}
   by considering $\eta \equiv 1$ or $\eta\equiv-1$ on $B_R$. By replacing $\eta$ with $\widehat{\eta}=\eta-\eta(0)$ and observing that for $x \in B_R$, $|\widehat{\eta}(x)|\leq (\sqrt{d}R)\vee 1$ by Lipschitz continuity, we get
   \begin{equation}\label{wass}
       \begin{split}
       & d_{W, B_R}\big(\tfrac{\mu}{\mu(B_R)},\tfrac{\nu}{\nu(B_R)}\big) = \sup_{\eta \in \lip_1(B_R)} \Big\{\frac{1}{\mu(B_R)}\int_{B_R} \eta\,\De \mu - \frac{1}{\nu(B_R)}\int_{B_R} \eta\,\De \nu \Big\}\\
       &\leq \sup_{\substack{\|\widehat{\eta}\|_\infty\leq (\sqrt{d}R)\vee1\\ \widehat{\eta} \in \lip_1(B_R)}} \Big\{\frac{1}{\mu(B_R)}\int_{B_R} \widehat{\eta}\,\De \mu - \frac{1}{\nu(B_R)}\int_{B_R} \widehat{\eta}\,\De \nu \Big\}\\
       &\leq \frac{(\sqrt{d}R)\vee 1}{\mu(B_R)\vee \nu(B_R)}\Big( |\mu(B_R)-\nu(B_R)|+\sup_{\substack{\|\eta\|_{\infty}\leq 1\\ \eta\in \lip_1(B_R)}}  \Big\{\int_{B_R} \eta\,\De \mu - \int_{B_R} \eta\,\De \nu \Big\}\Big),
       \end{split}
   \end{equation}
    where in the last inequality we replaced $\widehat{\eta}$ by $\widehat{\eta}/((\sqrt{d}R)\vee 1)$.
       By combining~\eqref{eq:totalmasses} and~\eqref{wass} the second inequality of~\eqref{eq:comparisonDistances} follows by the definition of $d_R$. The first inequality, follows from 
       \begin{equation}
       \begin{split}
           d_{BL,R}(\mu,\nu) & \leq \mu(B_R)\wedge \nu(B_R)\sup_{\substack{\|\eta\|_{\infty}\leq 1\\ \eta\in \lip_1(B_R)}}  \Big\{\tfrac{1}{\mu(B_R)}\int_{B_R} \eta\,\De \mu - \tfrac{1}{\nu(B_R)}\int_{B_R} \eta\,\De \nu \Big\} \\
           & + |\mu(B_R)-\nu(B_R)|
           \end{split}
       \end{equation}
       and the definition of $d_R$.
       \end{proof}

The proof of the main Theorem will rely to a large extent on a certain coarse-graining procedure introduced in \cite{nitzschner2017solidification} that brings into play a class of `porous interfaces' in the sense of \eqref{eq:ClassofporousInterf}. We will therefore recall the relevant scales that play a role in this procedure and provide the necessary definitions that will enter the coarse-graining scheme. 

For $0 <u < \overline{u}$, we consider $\alpha > \beta > \gamma$ in $(u, \overline{u})$ and take a sequence $(\gamma_N)_{N \geq 1}$ of numbers in $(0,1]$ that satisfy the conditions (4.18) of \cite{nitzschner2017solidification}, in particular $\gamma_N \rightarrow 0$ as $N \rightarrow \infty$. 

Next, we consider scales 
\begin{equation}
\label{eq:ScalesL0}
L_0 = \lfloor (\gamma_N N \log N )^{\frac{1}{d-1}} \rfloor,\quad \widehat{L}_0 = 100 d \lfloor \sqrt{\gamma_N} N\rfloor,
\end{equation}
in particular (cf.~(4.24) of \cite{nitzschner2017solidification}) one has that $\widehat{L}_0 / L_0 \rightarrow \infty$ as $N \rightarrow \infty$. Moreover, we will use the lattices 
\begin{equation}
\bbL_0 = L_0 \bbZ^d,\quad \widehat{\bbL}_0 = \tfrac{1}{100d} \widehat{L}_0 \bbZ^d
\end{equation}
and consider for $z \in \bbL_0$ the boxes
\begin{equation}
\begin{split}
B_z & = z + [0,L_0)^d \cap \bbZ^d \subseteq D_z = z + [-3L_0,4L_0)^d \cap \bbZ^d \\
& \subseteq U_z = z + [-KL_0 + 1, KL_0 - 1)^d \cap \bbZ^d,
\end{split}
\end{equation}
where $K \geq c(\alpha, \gamma,\beta, \widetilde{\varepsilon}) (\geq 100)$ is a large integer and the constant corresponds to $c_4(\alpha,\beta,\gamma) \vee c_5(\widetilde{\varepsilon}) \vee c_8(\alpha,\beta,\gamma)$ in the notation of Theorem 2.3, Proposition 3.1 and Theorem 5.1 of \cite{sznitman2017disconnection} which will be sent to infinity eventually.

We denote by $N_u(D_z)$ the number of excursions from $D_z$ to the exterior boundary $\partial U_z$ of $U_z$ that are in the trajectories of the random interlacements up to level $u$, see (3.14) and (2.42) of~\cite{sznitman2017disconnection}. Moreover, we will need the notion of a good$(\alpha,\beta,\gamma)$ box $B_z$ (which is otherwise called bad$(\alpha,\beta,\gamma)$), see (3.11)--(3.13) of~\cite{sznitman2017disconnection}. Roughly speaking, one considers the excursions of the interlacements between $D_z$ and the complement of $U_z$ according to some natural ordering. Being good$(\alpha,\beta,\gamma)$ then corresponds to the existence of a connected set with $| \cdot |_\infty$-diameter exceeding $L_0/10$ in the complement of the first $\alpha \capa_{\bbZ^d}(D_z)$ excursions inside $B_z$, which must be connected to similar components in neighboring boxes $B_{z'}$ in $D_z$ avoiding the first $\beta \capa_{\bbZ^d}(D_z)$ excursions. Additionally, we need that the first $\beta \capa_{\bbZ^d}(D_z)$ excursions have to spend a significant local time of at least $\gamma \capa_{\bbZ^d}(D_z)$ on the inner boundary of $D_z$.

These above properties of boxes have their equivalent in the study of level-set percolation of the Gaussian free field: In our setting, being good$(\alpha,\beta,\gamma)$ roughly plays the same role as being $\psi$-good as defined in Section 5 of \cite{sznitman2015disconnection} does for the Gaussian free field, whereas $N_u(D_z) \geq \beta \capa_{\bbZ^d}(D_z)$ corresponds in essence to the notion of $B_z$ being $h$-good, see (5.9) of \cite{sznitman2015disconnection}. On an informal level, one can understand the variables $N_u(D_z)$ as a means to track a global structure of the occupation-time field that governs the decay of correlations at leading order, while being good$(\alpha,\beta,\gamma)$ only depends on local fluctuations of the field, and for boxes sufficiently far apart, here one has good decoupling properties. 

\paragraph{Outline of the proof} Since the proof of Theorem \ref{thm:MainResult} is done in a multi-step procedure, we will now turn to a detailed description of the outcome of each of the five steps. 
\vspace{0.3\baselineskip}

\noindent \textbf{1.} \emph{Reduction of the uniformity to a location family}: The major effort in this step takes place in Proposition~\ref{handling_supremum}, where we introduce a location family $\{ \chi_\epsilon(\cdot - \tfrac{x}{N}) \}_{x \in B_{R,N}}$ with $B_{R,N} := (NB_R) \cap \bbZ^d$ and consider a discrete convolution of $\eta \in \lip_1(B_R)$ with $\chi_\epsilon(\cdot - \tfrac{x}{N})$ (see \eqref{eq:convolution}), to reduce the set of test functions to a much smaller class. By choosing $\epsilon$ sufficiently small, we obtain an upper bound (see~\eqref{eq:uniformity}) on the probability on the left-hand side of \eqref{eq:MainResultSection4} with $d_R$ replaced by $d_{BL,R}$ in terms of the probability that disconnection occurs and the measures $\scrL_{N,u}$ and $\scrM^u_{\mathring{A}}(x)\De x$ deviate from each other, when tested against elements of the location family (and not against all of $\lip_1(B_R)$). In doing so, we utilize the representation of the Laplace functional of the measure $\scrL_{N,u}$ from \eqref{eq:laplacetransform}.
\vspace{0.3\baselineskip}

\noindent\textbf{2.} \emph{Coarse graining of the disconnection event}: After discarding a `bad event' $\cB_N$ with negligible probability at the relevant order, the effective disconnection event $\widetilde{\cD}^u_N = \cD^u_N \setminus \cB_N$ is decomposed into sub-events $\cD_{N,\kappa}$, where $\kappa \in \cK_N$. A choice of $\kappa \in \cK_N$ will essentially correspond to a set of $L_0$-boxes between $A_N$ and $S_N$ which are all $\text{good}(\alpha,\beta,\gamma)$ and fulfill $N_u(D_z) \geq \beta \capa_{\bbZ^d}(D_z)$. Importantly, this coarse graining is of a `small combinatorial complexity', which means that $|\cK_N| = \exp \{o(N^{d-2}) \}$. Therefore, a union bound will allow us to further reduce the goal of bounding the probability on the right-hand side of \eqref{eq:uniformity} to finding a bound on the probability of the event $\cD_{N,\kappa} \cap \{ |\langle \scrL_{N,u} - \scrM^u_{\mathring{A}}, \chi_\epsilon(\cdot - \tfrac{x}{N}) \rangle | \geq \Delta' \}$ which is uniform in $\kappa \in \cK_N$ \emph{and} in $x \in B_{R,N}$. For a fixed $\kappa \in \cK_N$, upper bounds on the probability of such an event will eventually bring into play the Brownian capacity of a set $\Sigma$ (depending on this $\kappa$), which constitutes a porous interface of boxes surrounding $A' \subseteq \mathring{A}$. It will also be necessary to distinguish two types of $\kappa \in \cK_N$, those for which the Dirichlet energy of $\scrh_{\mathring{A}} - \scrh_{\Sigma}$ is smaller than a given $\mu > 0$, a case that we denote as $\kappa\in\cK^\mu_N$, and those for which the opposite holds. Importantly, in the latter case one can directly use the solidification result \eqref{eq:SolidificationDirichletEnergy} to infer a bound on the probability $\bbP[\cD_{N,\kappa}]$ that is sufficiently good for our purposes. The main result of this step will be \eqref{eq:coarseGraining}, and in what follows we only need to focus on the cases where $\kappa \in \cK^\mu_N$. 
\vspace{0.3\baselineskip}

\noindent\textbf{3.} \emph{Uniform replacement of $\scrM^u_{\mathring{A}}$ by $\cM^u_C$:}  In this step, we aim at replacing the measure $\scrM^u_{\mathring{A}}(x) \De x$ by the measure $\tfrac{1}{N^d} \sum_{x\in \bbZ^d} \cM^u_C(x) \delta_{x/N}$, when tested against a function $\eta \in \lip_1(B_R)$, where $C$ is the set of discrete boxes associated to $\kappa \in \cK_N^\mu$ and $\cM^u_C$ is defined in~\eqref{eq:DiscreteProfileBoxes}. This is the aim of Proposition~\ref{ReplacementProp}. By the result from the previous step, in order to make use of such a replacement, the bound on the error needs to be uniform in $\kappa \in \cK_N^\mu$. Remarkably, this step is purely deterministic and also does not use any solidification estimates. Instead, we entirely rely on a strong coupling technique going back to~\cite{einmahl1989extensions} in the spirit of Komlos, Major and Tusnady to compare $h_C$ with its continuous counterpart $\scrh_{\Sigma}$ and on gradient estimates for bounded harmonic functions.
\vspace{0.3\baselineskip}

\noindent\textbf{4.} \emph{Occupation-time bounds}: After combining the results of the previous steps, we are left with  providing an upper bound on the probability of the intersection of $\cD_{N,\kappa}$ and $\Big\{ \Big|\langle \scrL_{N,u},\chi_\epsilon(\cdot - \frac{x}{N}) \rangle - \tfrac{1}{N^d} \sum_{y\in \bbZ^d} \cM^u_C(y) \chi_\epsilon(\tfrac{y-x}{N})\Big| \geq   \Delta' \Big\}$, which is both uniform in $x \in B_{R,N}$ and in $\kappa \in \cK^\mu_N$. This is done in Proposition~\ref{PropOccTimeBounds}:
The main observation is that the intersection of these two events entails the occurrence of a large deviation of a certain perturbed potential from its expectation, see \eqref{eq:encoded}, which brings us to a situation reminiscent of (4.8) of Theorem 4.2 in~\cite{sznitman2017disconnection}. At this point, the perturbation formulae for the Laplace transform of the occupation time measure (more specifically Corollary \ref{Cor_OccTime}) will be brought into play to yield an exponential bound involving $\capa_{\bbZ^d}(C)$. 
\vspace{0.3\baselineskip}

\noindent\textbf{5.} \emph{Application of the solidification estimates}: 
Via Lemma~\ref{comparisonDistances} we reduce~\eqref{eq:MainResultSection4} to a similar statement for the bounded Lipschitz distance $d_{BL,R}$. Following this, we proceed to collect the bounds obtained in Propositions~\ref{handling_supremum},~\ref{ReplacementProp} and~\ref{PropOccTimeBounds} and use solidification estimates to finalize the proof. 

\paragraph{\textbf{Step 1. Reduction of the uniformity to a location family}} We start by reducing the problem of controlling the supremum of $|\langle \scrL_{N,u} - \scrM^u_{\mathring{A}}, \eta\rangle|$ over the class $\lip_1(B_R)$ to a much smaller class, namely the `location family' $\{\chi_{\epsilon}(\cdot-x/N)\}_{x\in B_{R,N}}$ (recall that $B_{R,N} = (NB_R) \cap \bbZ^d$).

To do this, we consider for $\epsilon > 0$ the discrete convolution of any function in the class $\eta \in \lip_1(B_R)$ with $\|\eta\|_\infty \leq 1$ with $\chi_{\epsilon}(\cdot-x/N)$ and control the probability that $\cD^u_N$ and a deviation between $\scrL_{N,u}$ and $\scrM_{\mathring{A}}^u(x) \De x$ in the $d_{BL,R}$-distance of size bigger that $\Delta$ happen simultaneously. In essence, we show that the main contribution to this probability at leading order comes from the event that $|\langle \scrL_{N,u} - \scrM_{\mathring{A}}^u, \chi_\epsilon(\cdot - x/N) \rangle|$ becomes large for one of the $x \in B_{R,N}$ and disconnection occurs. This is encapsulated in the following proposition:
\begin{prop}\label{handling_supremum} Let $\chi:\bbR^d \to [0,1]$ be a symmetric, smooth probability density supported in the Euclidean unit ball and fix $R>0$.  Then, there exists $\epsilon = \epsilon(\Delta,R,A)\in (0,1)$ such that if $\chi_\epsilon(x):= \epsilon^{-d} \chi(x/\epsilon)$,
    \begin{equation}\label{eq:uniformity}
        \begin{aligned}
            \varlimsup_{N} &\tfrac{1}{N^{d-2}} \log \bbP\Big[d_{BL,R}(\scrL_{N,u}, \scrM^u_{\mathring{A}}) \geq \Delta; \cD^u_N\Big] \leq \big( -\tfrac{1}{d} (\sqrt{\overline{u}} -\sqrt{u})^2 \capa(\mathring{A}) - 1\big) 
            \\
            & \vee \varlimsup_{N} \sup_{x\in B_{R,N}}\tfrac{1}{N^{d-2}} \log \bbP\Big[\big|\langle\scrL_{N,u} - \scrM^u_{\mathring{A}}, \chi_{\epsilon}(\cdot-x/N)\rangle\big| \geq \tfrac{\Delta}{4|B_R|} ; \cD^u_N\Big].
        \end{aligned}
    \end{equation}
\end{prop}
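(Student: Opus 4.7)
The plan is to split the probability on the left of~\eqref{eq:uniformity} through the bad event $\cB_N=\{\scrL_{N,u}(B_R)>T\}$, where $T=T(\Delta,R,A,u)$ is a large threshold. The contribution from $\cB_N$ is handled by the Laplace transform identity~\eqref{eq:laplacetransform} and produces the first term in the maximum on the right-hand side of~\eqref{eq:uniformity}. On the complement $\cB_N^c$ the argument becomes deterministic: a mollification of any test function $\eta \in \lip_1(B_R)$ followed by a Riemann sum discretization reduces the supremum defining $d_{BL,R}$ to the finite location family $\{\chi_\epsilon(\cdot-x/N)\}_{x\in B_{R,N}}$.

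For the bound on $\bbP[\cB_N]$, I would take $V(y)=\theta N^{-d}\IND_{\{y\in B_{R,N}\}}$ with $\theta$ a small positive multiple of $N^{d-2}$, so that $\|G|V|\|_\infty\leq 1/2$; this is possible because~\eqref{eq:AsymptoticBehaviourGreen} yields $\sum_{y\in B_{R,N}}g(x-y)=O(N^2)$ uniformly in $x$. Using $(I-GV)^{-1}1\leq 2$ pointwise and $\langle V,1\rangle_{\bbZ^d}=O(\theta)$, the identity~\eqref{eq:laplacetransform} delivers $\bbE[\exp(\theta\scrL_{N,u}(B_R))]\leq\exp(cu\theta)$ for some $c=c(R)>0$. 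The exponential Chebyshev inequality then gives, for $T$ chosen sufficiently large depending on $\Delta,R,A,u$,
\begin{equation*}
\bbP[\cB_N]\leq\exp\Big(-N^{d-2}\big(\tfrac{1}{d}(\sqrt{\overline{u}}-\sqrt{u})^2\capa(\mathring{A})+1\big)\Big).
\end{equation*}

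On $\cB_N^c\cap\{d_{BL,R}(\scrL_{N,u},\scrM^u_{\mathring{A}})\geq\Delta\}$, take any $\eta\in\lip_1(B_R)$ with $\|\eta\|_\infty\leq 1$ realizing $|\langle\mu,\eta\rangle|\geq\Delta$ for $\mu:=\scrL_{N,u}-\scrM^u_{\mathring{A}}$, extend $\eta$ to $\bbR^d$ preserving the Lipschitz constant and set $\eta_\epsilon:=\eta\ast\chi_\epsilon$. The bound $\|\eta-\eta_\epsilon\|_\infty\leq\epsilon$ (from the support of $\chi_\epsilon$) and the estimate $|\mu|(B_{R+1})\leq T+c'$ with $c':=\scrM^u_{\mathring{A}}(B_{R+1})<\infty$ give $|\langle\mu,\eta-\eta_\epsilon\rangle|\leq\epsilon(T+c')$. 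Using the symmetry of $\chi$, Fubini yields $\langle\mu,\eta_\epsilon\rangle=\int\eta(x)\langle\mu,\chi_\epsilon(\cdot-x)\rangle\,\De x$, and since $x\mapsto\langle\mu,\chi_\epsilon(\cdot-x)\rangle$ is Lipschitz with constant bounded by $\|\nabla\chi_\epsilon\|_\infty\cdot|\mu|(B_{R+\epsilon})$, a Riemann sum approximation on $N^{-1}\bbZ^d$ produces
\begin{equation*}
\Big|\langle\mu,\eta_\epsilon\rangle-\tfrac{1}{N^d}\sum_{x\in B_{R,N}}\eta(x/N)\langle\mu,\chi_\epsilon(\cdot-x/N)\rangle\Big|\leq r(N,\epsilon,T),
\end{equation*}
with $r(N,\epsilon,T)\to 0$ as $N\to\infty$ for each fixed $\epsilon,T$. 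Choosing $\epsilon=\epsilon(\Delta,R,A)$ such that $\epsilon(T+c')\leq\Delta/4$ and $N$ so large that $r(N,\epsilon,T)\leq\Delta/4$ and $|B_{R,N}|/N^d\leq 2|B_R|$, the hypothesis $|\langle\mu,\eta\rangle|\geq\Delta$ forces $\tfrac{1}{N^d}\big|\sum_{x\in B_{R,N}}\eta(x/N)\langle\mu,\chi_\epsilon(\cdot-x/N)\rangle\big|\geq\Delta/2$, and hence the existence of some $x\in B_{R,N}$ with $|\langle\mu,\chi_\epsilon(\cdot-x/N)\rangle|\geq\Delta/(4|B_R|)$.

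A union bound over the $|B_{R,N}|=O(N^d)$ locations, combined with the tail bound on $\bbP[\cB_N]$ and the fact that $\log|B_{R,N}|=o(N^{d-2})$, concludes the argument after taking $\limsup_N\tfrac{1}{N^{d-2}}\log$. The main subtlety is the joint calibration of the constants: $T$ must be picked first so that $\bbP[\cB_N]$ beats the desired rate, only then can $\epsilon=\epsilon(\Delta,R,A)$ be chosen to absorb the mollification error $\epsilon(T+c')$, and finally $N$ must be sent to infinity to kill the Riemann sum error, which diverges in $\epsilon$ through $\|\nabla\chi_\epsilon\|_\infty$. Once this ordering is respected, each deterministic step is standard.
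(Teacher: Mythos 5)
Your overall skeleton---an exponential-moment bound via \eqref{eq:laplacetransform} to discard an exceptional event, a deterministic mollification and Riemann-sum reduction to the location family, and a union bound over $B_{R,N}$---is the same as the paper's, but your treatment of the boundary of $B_R$ has a genuine gap. The quantity $d_{BL,R}$ only sees the two measures restricted to $B_R$, whereas the events on the right-hand side of \eqref{eq:uniformity} involve the unrestricted pairings $\langle\scrL_{N,u}-\scrM^u_{\mathring{A}},\chi_\epsilon(\cdot-x/N)\rangle$, whose test functions stick out of $B_R$ when $x/N$ lies within distance $\epsilon$ of $\partial B_R$. With your Lipschitz-preserving extension of $\eta$, the chain $\langle\mu,\eta\rangle\approx\langle\mu,\eta_\epsilon\rangle=\int\eta(x)\,\langle\mu,\chi_\epsilon(\cdot-x)\rangle\,\De x$ is only coherent if every pairing is taken with $\mu$ restricted to $B_R$ (the hypothesis $d_{BL,R}\geq\Delta$ concerns only the restricted pairing, and the unrestricted pairing of the extended $\eta$ against $\mu$ is not even well defined, both $\scrL_{N,u}$ and $\scrM^u_{\mathring{A}}\,\De x$ having infinite total mass). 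Hence what your argument localizes is a large value of $\langle\mu|_{B_R},\chi_\epsilon(\cdot-x/N)\rangle$, possibly at a location with $x/N\in B_{R+\epsilon}\setminus B_R$. Passing from this to the stated event requires controlling $\int_{B_R^c}\chi_\epsilon(y-x/N)\,(\scrL_{N,u}(\De y)-\scrM^u_{\mathring{A}}(y)\De y)$, i.e.\ the occupation measure of an $\epsilon$-neighbourhood of $\partial B_R$ weighted by $\epsilon^{-d}$. This is a probabilistic statement that your cutoff event $\cB_N=\{\scrL_{N,u}(B_R)>T\}$ (or its $B_{R+1}$ variant) does not deliver: bounding the annulus mass by the total mass $T$ is far too crude, and the resulting error does not shrink with $\epsilon$---it grows precisely as you shrink $\epsilon$ to absorb the term $\epsilon(T+c')$. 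If instead you extend $\eta$ by zero, so that the unrestricted pairings appear directly, then $\|\eta-\eta_\epsilon\|_\infty\leq\epsilon$ fails on the annulus $B_{R+\epsilon}\setminus B_{R-\epsilon}$ and you face the same obstruction.

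This boundary issue is exactly what the paper's proof is built to handle: the mollification error is bounded by $c\epsilon\big(\IND_{B_{R-\epsilon}}+\tfrac{1}{\epsilon}\IND_{B_{R+\epsilon}\setminus B_{R-\epsilon}}\big)$ tested against $\scrL_{N,u}+\scrM^u_{\mathring{A}}$, and the resulting functional is controlled by an exponential Chebyshev bound whose rate is uniform in $\epsilon<1$, thanks to the slab estimate $\|G\psi_N\|_\infty\leq cR^2N^2$ in \eqref{eq:BoundGPsi}--\eqref{eq:BoundSecondTermSlab}; only then is $\epsilon$ fixed through \eqref{eq:choiceofepsilon}. Some probabilistic control of the occupation time near $\partial B_R$ at weight $\epsilon^{-1}$ (or $\epsilon^{-d}$ locally) is unavoidable and is the missing ingredient in your proposal. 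Two further, easily repaired points: your threshold $T$ should be calibrated against $\overline{u}$ rather than $u$ (otherwise $\epsilon$ would inherit a $u$-dependence, contradicting $\epsilon=\epsilon(\Delta,R,A)$), and the cutoff event must be formulated on the region on which you actually invoke the mass bound ($B_R$ versus $B_{R+1}$).
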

\begin{proof} 
Recall the definition of $d_{BL,R}$ from~\eqref{bLDistance}. In order to reduce the family $\lip_1(B_R)$ of test functions to a much smaller set, we consider for each $\eta\in \lip_1(B_R)$ the \emph{discrete convolution}
\begin{equation}
    \label{eq:convolution}
    \eta^\epsilon_N(x) = \frac{1}{N^d} \sum_{y\in \bbZ^d} \chi_\epsilon\Big(x-\frac{y}{N}\Big) \eta\Big(\frac{y}{N}\Big),\quad x\in \bbR^d,
\end{equation}
where we implicitly extended $\eta$ to be zero outside of $B_R$. We note that $\supp \eta^\epsilon_N \subseteq B_{R+\epsilon}$, and 
\begin{equation}\label{eq:convolvedeta}
    \limsup_{N} \sup_{\substack{\|\eta\|_\infty\leq 1\\ \eta\in \lip_1(B_R)}} \sup_{x\in B_{R-\epsilon}} |\eta^\epsilon_N(x)-\eta(x)| \leq \epsilon \int \chi(y)|y|\, \De y.
\end{equation}
Our first goal is to replace $\eta$ by $\eta^\epsilon_N$ in $|\langle \scrL_{N,u} - \scrM^u_{\mathring{A}}, \eta\rangle|$. Upon using the triangle inequality, we have 
\begin{equation}
    \begin{split}
        d_{BL,R}&(\scrL_{N,u}, \scrM^u_{\mathring{A}}) = \sup_{\substack{\|\eta\|_\infty\leq 1\\ \eta\in \lip_1(B_R)}}|\langle \scrL_{N,u} - \scrM^u_{\mathring{A}}, \eta\rangle| \\
        & \leq \sup_{\substack{\|\eta\|_\infty\leq 1\\ \eta\in \lip_1(B_R)}}|\langle \scrL_{N,u} - \scrM^u_{\mathring{A}}, \eta^\epsilon_N\rangle|   
        + \big\langle \scrM^u_{\mathring{A}} + \scrL_{N,u}, \sup_{\substack{\|\eta\|_\infty\leq 1\\ \eta\in \lip_1(B_R)}}|\eta^\epsilon_N - \eta|\big\rangle.
    \end{split}
\end{equation}
Consequently, we obtain the bound
\begin{equation}\label{eq:split}
    \begin{aligned} 
        \bbP\Big[d_{BL,R}(\scrL_{N,u}, \scrM^u_{\mathring{A}})  \geq \Delta; \cD^u_N\Big] & \leq \bbP\Big[\sup_{\substack{\|\eta\|_\infty\leq 1\\ \eta\in \lip_1(B_R)}}|\langle \scrL_{N,u} - \scrM^u_{\mathring{A}}, \eta^\epsilon_N\rangle| \geq \tfrac{\Delta}{2}; \cD^u_N\Big] \\
        & + \bbP\Big[\big\langle \scrM^u_{\mathring{A}} + \scrL_{N,u}, \sup_{\substack{\|\eta\|_\infty\leq 1\\ \eta\in \lip_1(B_R)}}|\eta^\epsilon_N - \eta|\big\rangle \geq \tfrac{\Delta}{2}\Big].
    \end{aligned}
\end{equation}
We will now derive separate large deviations upper bounds for the two summands in~\eqref{eq:split}. We start with an upper bound for the first summand in~\eqref{eq:split}. We observe that in view of the definition of $\eta^\epsilon_N$ and the fact that $\|\eta\|_\infty\leq 1$ and $\supp \eta\subseteq B_R$
\begin{equation}
    \begin{split}
        |\langle \scrL_{N,u} - \scrM^u_{\mathring{A}},\eta^\epsilon_N\big\rangle|& \leq \frac{1}{N^d}\sum_{x\in \bbZ^d} \Big|\eta \Big(\frac{x}{N}\Big) \Big| \big|\langle\scrL_{N,u} - \scrM^u_{\mathring{A}}, \chi_{\epsilon}(\cdot-x/N)\rangle\big|\\
        &\leq \frac{|B_{R,N}|}{N^d} \sup_{x\in B_{R,N}}|\langle \scrL_{N,u} - \scrM^u_{\mathring{A}},  \chi_{\epsilon}(\cdot-x/N)\rangle|.
    \end{split}
\end{equation}Using a union bound, we thus conclude that 
\begin{equation}\label{eq:pre_firstsummand}
    \begin{aligned}
        \bbP\Big[&\sup_{\substack{\|\eta\|_\infty\leq 1\\ \eta\in \lip_1(B_R)}}|\langle \scrL_{N,u} - \scrM^u_{\mathring{A}}, \eta^\epsilon_N\rangle| \geq \tfrac{\Delta}{2}; \cD^u_N\Big] \\ &\leq  |B_{R,N}|\sup_{x\in B_{R,N}}  \bbP\Big[|\langle \scrL_{N,u} - \scrM^u_{\mathring{A}},  \chi_{\epsilon}(\cdot-x/N)\rangle| \geq \tfrac{\Delta N^d}{2 |B_{R,N}|}; \cD^u_N\Big].
    \end{aligned}
\end{equation}
Taking logarithms, dividing by $N^{d-2}$ and sending $N\to \infty$ in~\eqref{eq:pre_firstsummand} yields
\begin{equation}\label{eq:firstsummand}
    \begin{aligned}
        &\varlimsup_N \tfrac{1}{N^{d-2}} \log \bbP\Big[\sup_{\substack{\|\eta\|_\infty\leq 1\\ \eta\in \lip_1(B_R)}}|\langle \scrL_{N,u} - \scrM^u_{\mathring{A}}, \eta^\epsilon_N\rangle| \geq \tfrac{\Delta}{2}; \cD^u_N\Big] \\ &\leq  \varlimsup_N \sup_{x\in B_{R,N}}\tfrac{1}{N^{d-2}} \log   \bbP\Big[|\langle \scrL_{N,u} - \scrM^u_{\mathring{A}},  \chi_{\epsilon}(\cdot-x/N)\rangle| \geq \tfrac{\Delta}{4 |B_R|}; \cD^u_N\Big],
    \end{aligned}
\end{equation}
where we used that $|B_{R,N} | \leq 2 |B_R| N^d$ for any $N$ large enough.

We are left with handling the second summand in~\eqref{eq:split}. In view of~\eqref{eq:convolvedeta} and the fact that $\chi$ is assumed to be bounded by one, we have that for all $N$ large enough
\begin{equation}
\label{eq:MollificationCloseness}
    \sup_{\substack{\|\eta\|_\infty\leq 1\\\eta \in \lip_1(B_R)}} \sup_{x\in B_{R-\epsilon}} \big|\eta^\epsilon_N(x)- \eta(x) \big| \leq c \epsilon.
\end{equation}
Observe that $\eta^\epsilon-\eta=0$ on $\bbR^d\setminus B_{R+\epsilon}$. Then, for all large $N$, and any $\epsilon<1$,  
\begin{equation}\label{eq:boundlabel}
    \big\langle \scrM^u_{\mathring{A}} + \scrL_{N,u}, \sup_{\substack{\|\eta\|_\infty\leq 1\\ \eta\in \lip_1(B_R)}}|\eta^\epsilon_N - \eta|\big\rangle \leq c \epsilon \overline{u}\, R^d  + c \epsilon \langle \scrL_{N,u},  \mathbbm{1}_{B_{R-\epsilon}} + \tfrac{1}{\epsilon}\mathbbm{1}_{B_{R+\epsilon}\setminus B_{R-\epsilon}}\rangle.
\end{equation}

Define for convenience $\psi := \mathbbm{1}_{B_{R-\epsilon}} + \tfrac{1}{\epsilon}\mathbbm{1}_{B_{R+\epsilon}\setminus B_{R-\epsilon}}$ and $\psi_N (x) := \psi(x/N)$. We then fix $\lambda = \lambda(R)$ such that $\lambda N^{-2} \|G \psi_N\|_\infty \leq 1/2$ for all $N$ large. Let us explain why this choice is possible. For $x \in \bbZ^d$, one has
\begin{equation}
\label{eq:BoundGPsi}
\begin{split}
    G \psi_N(x) & = \sum_{y\in (N B_{R-\epsilon})\cap \bbZ^d} g(x,y) +  \tfrac{1}{\epsilon}\sum_{y\in (N B_{R+\epsilon}\setminus B_{R-\epsilon})\cap \bbZ^d} g(x,y) \\
    & \leq c R^2 N^2 + \tfrac{1}{\epsilon} \sum_{y\in (N B_{R+\epsilon}\setminus B_{R-\epsilon})\cap \bbZ^d} g(x,y).
    \end{split}
\end{equation}
Let us argue how the second summand in~\eqref{eq:BoundGPsi} is bounded. By $H^\epsilon_N = \{- 2 \lfloor \epsilon N \rfloor,..., 2 \lfloor \epsilon N \rfloor \} \times \{ - 2 \lfloor RN\rfloor ,..., 2\lfloor RN \rfloor \}^{d-1}$, we denote a slab of size  $4 \lfloor \epsilon N \rfloor + 1$ in one and  $4 \lfloor RN\rfloor + 1$ in $d-1$ dimensions.  For $N$ large enough, the set $(NB_{R+\epsilon} \setminus B_{R-\epsilon}) \cap \bbZ^d$ is contained in a union of $2d$ rotated and translated copies of $H_N^\epsilon$. By~\eqref{eq:AsymptoticBehaviourGreen} and the equivalence between the Euclidean and sup-norm in $\bbR^d$, one can see that 
\begin{equation}
\sum_{y\in (N B_{R+\epsilon}\setminus B_{R-\epsilon})\cap \bbZ^d} g(x,y) \leq \sum_{y \in H^\epsilon_N} \frac{c}{(|y|_\infty \wedge 1)^{d-2}},
\end{equation}
where we used that the sum of $g(x,y)$ over $y \in H_N^\epsilon$ is maximal when $x = 0$.

Finally, by decomposing the slab $H_N^\epsilon$ into the cube $Q_N^\epsilon$, defined by $Q_N^\epsilon := \{-2\lfloor \epsilon N\rfloor ,..., 2\lfloor \epsilon N \rfloor \}^d$, and the rest, one obtains:
\begin{equation}
\label{eq:BoundSecondTermSlab}
\begin{split}
\sum_{y \in H^\epsilon_N} \frac{c}{(|y|_\infty \wedge 1)^{d-2}} & \leq \sum_{y \in Q^\epsilon_N} \frac{c}{(|y|_\infty \wedge 1)^{d-2}} + \sum_{y \in H^\epsilon_N \setminus Q^\epsilon_N } \frac{c}{(|y|_\infty \wedge 1)^{d-2}} \\
& \leq c \epsilon^2 N^2 + c' \epsilon R N^2.
\end{split}
\end{equation} 
Combining~\eqref{eq:BoundGPsi}--\eqref{eq:BoundSecondTermSlab} and using $\epsilon < 1$, one finds that $\|G \psi_N \|_\infty \leq cR^2 N^2$.

Then, with the help of~\eqref{eq:laplacetransform}, we get for all $N$ large enough (using the exponential Markov inequality in the second bound)
\begin{equation}
    \begin{aligned}
        \bbP& \Big[\big\langle \scrM^u_{\mathring{A}} + \scrL_{N,u}, \sup_{\substack{\|\eta\|_\infty\leq 1\\ \eta\in \lip_1(B_R)}}|\eta^\epsilon_N - \eta|\big\rangle \geq \tfrac{\Delta}{2}\Big] \stackrel{\eqref{eq:boundlabel}}{\leq} \bbP\Big[\langle \scrL_{N,u},\psi\rangle \geq c'\tfrac{\Delta}{\epsilon} - c \overline{u}\,R^d\Big]\\ 
        &\leq \exp\Big\{-\lambda N^{d-2} \Big(c' \tfrac{\Delta}{\epsilon} - c\overline{u}\, R^d\Big) + u \lambda N^{-2} \langle \psi_N, (I - \lambda N^{-2} G \psi_N)^{-1}1\rangle_{\bbZ^d}\Big\}\\
        &\leq \exp\Big\{-\lambda N^{d-2} \Big( c'\tfrac{\Delta}{\epsilon} - c \overline{u}\,R^d\Big) + c u \lambda  R^d N^{d-2}\Big\}, 
    \end{aligned}
\end{equation}
We finally choose $\epsilon = \epsilon(\Delta,R,A) < 1$ in such a way that
\begin{equation}\label{eq:choiceofepsilon}
    \lambda \Big(c' \tfrac{\Delta}{\epsilon} - c \overline{u}\,R^d\Big) - c u \lambda R^d  \geq  \tfrac{1}{d} (\sqrt{\overline{u}} -\sqrt{u})^2 \capa(\mathring{A}) + 1.
\end{equation}

After taking logarithms and sending $N\to \infty$, we conclude that for this choice of $\epsilon$
\begin{equation}\label{eq:secondsummand}
\begin{split}
    \limsup_{N} \tfrac{1}{N^{d-2}} \log \bbP\Big[\big\langle \scrM^u_{\mathring{A}} + \scrL_{N,u}, & \sup_{\substack{\|\eta\|_\infty\leq 1\\ \eta\in \lip_1(B_R)}}|\eta^\epsilon_N - \eta|\big\rangle \geq \tfrac{\Delta}{2}\Big] \\
    & \leq - \tfrac{1}{d} (\sqrt{\overline{u}} -\sqrt{u})^2 \capa(\mathring{A}) - 1.
    \end{split}
\end{equation}
By combining~\eqref{eq:firstsummand} and~\eqref{eq:secondsummand} in~\eqref{eq:split}, we obtain the claimed~\eqref{eq:uniformity}. 
\end{proof}

\paragraph{\textbf{Step 2. Coarse graining of the disconnection event}} In this step, we revisit the coarse-graining of the disconnection event $\cD^u_N$ developed in \cite{nitzschner2017solidification} that allows us to bring into play a set of `bad' boxes between $A_N$ and $S_N$ whose scaled $\bbR^d$-filling will act as a porous interface.

Recall the definition of the scales $L_0$ and $\widehat{L}_0$ from \eqref{eq:ScalesL0}. As a first step, we define the random subset 
\begin{equation}
\cU^1 = \ \begin{minipage}{0.8\linewidth}
  the union of all $L_0$-boxes $B_z$ that are either contained in\\  $B(0,(M+1)N)^c$ or linked to an $L_0$-box in $B(0,(M+1)N)^c$ by
a path of $L_0$-boxes $B_{z_i}$, $0 \leq i \leq n$, all (except possibly the last one) good($\alpha,\beta,\gamma$) and such that $N_u(D_{z_i}) < \beta \capa_{\bbZ^d}(D_{z_i})$.
\end{minipage}
\end{equation}
To determine the presence of $\cU^1$ within boxes $B(x,\widehat{L}_0)$, we introduce the function
\begin{equation}
\widehat{\sigma}(x) = |\cU^1 \cap B(x,\widehat{L}_0) | / |B(x,\widehat{L}_0)|, \quad x \in \bbZ^d,
\end{equation}
and moreover we define the (random) subset $\widehat{\cS}_N \subseteq \widehat{\bbL}_0$, that provides a `segmentation' of the interface of blocking $L_0$-boxes, namely 
\begin{equation}
\widehat{\cS}_N = \left\{x \in \widehat{\bbL}_0; \widehat{\sigma}(x) \in \left[ \tfrac{1}{4}, \tfrac{3}{4} \right] \right\}.
\end{equation} 
We proceed as in (4.39) of~\cite{nitzschner2017solidification} and extract $\widetilde{\cS}_N \subseteq \widehat{\cS}_N $ such that
\begin{equation} 
\begin{minipage}{0.8\linewidth}$\widetilde{\cS}_N$ is a maximal subset of $\widehat{\cS}_N$ with the property that the $B(x,2\widehat{L}_0)$, $x \in \widetilde{\cS}_N$, are pairwise disjoint.
\end{minipage}
\end{equation}
Further, we need the `bad' event $\cB_N$ from (4.22) of \cite{nitzschner2017solidification}, which is defined as 
\begin{equation}
\label{eq:BadEvent}
\cB_N = \bigcup_{e \in \{e_1,...,e_d \} } \left\{ \begin{minipage}{0.6\textwidth}
  there are at least $\rho(L_0)(N_{L_0}/L_0)^{d-1}$ columns of $L_0$-boxes in direction $e$ in $B(0,10(M+1)N)$ that contain a bad($\alpha, \beta, \gamma$) $L_0$-box 
\end{minipage}\right\},
\end{equation}
in which $\rho(L)$ is a positive function depending on $\alpha, \beta, \gamma$ and $K$ tending to $0$ as $L \rightarrow \infty$, $N_{L_0} = L_0^{d-1}/\log L_0$, and $\{e_1,...,e_d \}$ is the canonical basis of $\bbR^d$. The probability of the `bad' event decays with a super-exponential rate at the order we are interested in, namely
\begin{equation}
\label{eq:SuperExponentialBound}
\lim_{N } \frac{1}{N^{d-2}} \log \bbP[\cB_N] = -\infty,
\end{equation}
see Lemma 4.2 of \cite{nitzschner2017solidification}. For this reason, it can be discarded in the further discussion by working on an \emph{effective} disconnection event 
\begin{equation}
\widetilde{\cD}^\alpha_N = \cD^\alpha_N \setminus \cB_N.
\end{equation}
We set $\overline{K} = 2K+3$ and as in (4.41) of \cite{nitzschner2017solidification}, one has that
\begin{equation}\label{effectiveevent}
  \begin{minipage}{0.85\textwidth}
    for large $N$, on $\widetilde{\cD}^\alpha_N$, for each $x \in \widetilde{\cS}_N$, there is a collection $\widetilde{\cC}_x$ of points in $\bbL_0$ such that the corresponding $L_0$-boxes $B_z$, $z\in \widetilde{\cC}_x$, intersect $B(x,\widehat{L}_0)$ with $\widetilde{\pi}_x$-projection at mutual distance at least $\overline{K}L_0$. Moreover, $\widetilde{\cC}_x$ has cardinality $\big\lfloor\big( \tfrac{c'}{K} \tfrac{\widehat{L}_0}{L_0} \big)^{d-1} \big\rfloor$ and for each $z \in \widetilde{\cC}_x$, $B_z$ is good($\alpha,\beta,\gamma$) and $N_u(D_z) \geq \beta \capa_{\bbZ^d}(D_z)$ \end{minipage}
\end{equation}
(here, for each $x \in \widetilde{\cS}_N$, $\widetilde{\pi}_x$ is the projection on the set of points in $\bbZ^d$ with vanishing $\widetilde{i}_x$-coordinate).  

We now introduce the random variable $\kappa_N$ defined on $\widetilde{\cD}^\alpha_N$ with range $\cK_N$, 
\begin{equation}
\kappa_N = \big(\widehat{\cS}_N, \widetilde{\cS}_N, (\widetilde{\pi}_x, \widetilde{\cC}_x)_{x \in \widetilde{\cS}_N} \big),
\end{equation}
see below (4.41) of \cite{nitzschner2017solidification} or (3.19) of \cite{nitzschner2018entropic}. As in (4.43) of \cite{nitzschner2017solidification} the choice of $\widehat{L}_0$ and $L_0$ together with \eqref{effectiveevent}, implies the `small combinatorial complexity':
\begin{equation}
\label{eq:SmallCombCompl}
|\cK_N| = \exp\{ o(N^{d-2}) \}.
\end{equation}  
For a given choice of $\kappa = (\widehat{\cS},\widetilde{\cS}, (\widetilde{\pi}_x,\widetilde{\cC}_x)_{x \in \widetilde{\cS}}) \in \cK_N$, we define a number of sets which will be of use later:
\begin{equation}
\label{eq:UnionBound}
\begin{cases}
\cC & = \bigcup_{x \in \widetilde{\cS}} \widetilde{\cC}_x \\
C & = \bigcup_{z \in \cC} D_z \subseteq \bbZ^d \\
U_1 & = \text{ the unbounded component of }\bbR^d \setminus \tfrac{1}{N}  \bigcup_{x \in \widehat{\cS}} B_\infty\Big(x, \tfrac{1}{50d} \widehat{L}_0 \Big)  \\
U_0 & = \bbR^d \setminus U_1 \\
\Sigma^{(r)} & = \frac{1}{N} \bigcup_{z \in \cC} \Big (z + [-(3+r)L_0, (4+r)L_0]^d \Big ) \subseteq \bbR^d, r \in (0,3) \\
\Sigma & = \frac{1}{N} \bigcup_{z \in \cC} \Big (z + [-3L_0,4L_0]^d \Big ) \subseteq \bbR^d \\
\Sigma_{(r)} & = \frac{1}{N} \bigcup_{z \in \cC} \Big (z + [-(3-r)L_0, (4-r)L_0]^d \Big )\subseteq \bbR^d, r \in (0,3).
\end{cases}
\end{equation}
\begin{figure}[ht]\label{fig:scalesDisc}
  \centering
  \includegraphics[width=0.7\textwidth]{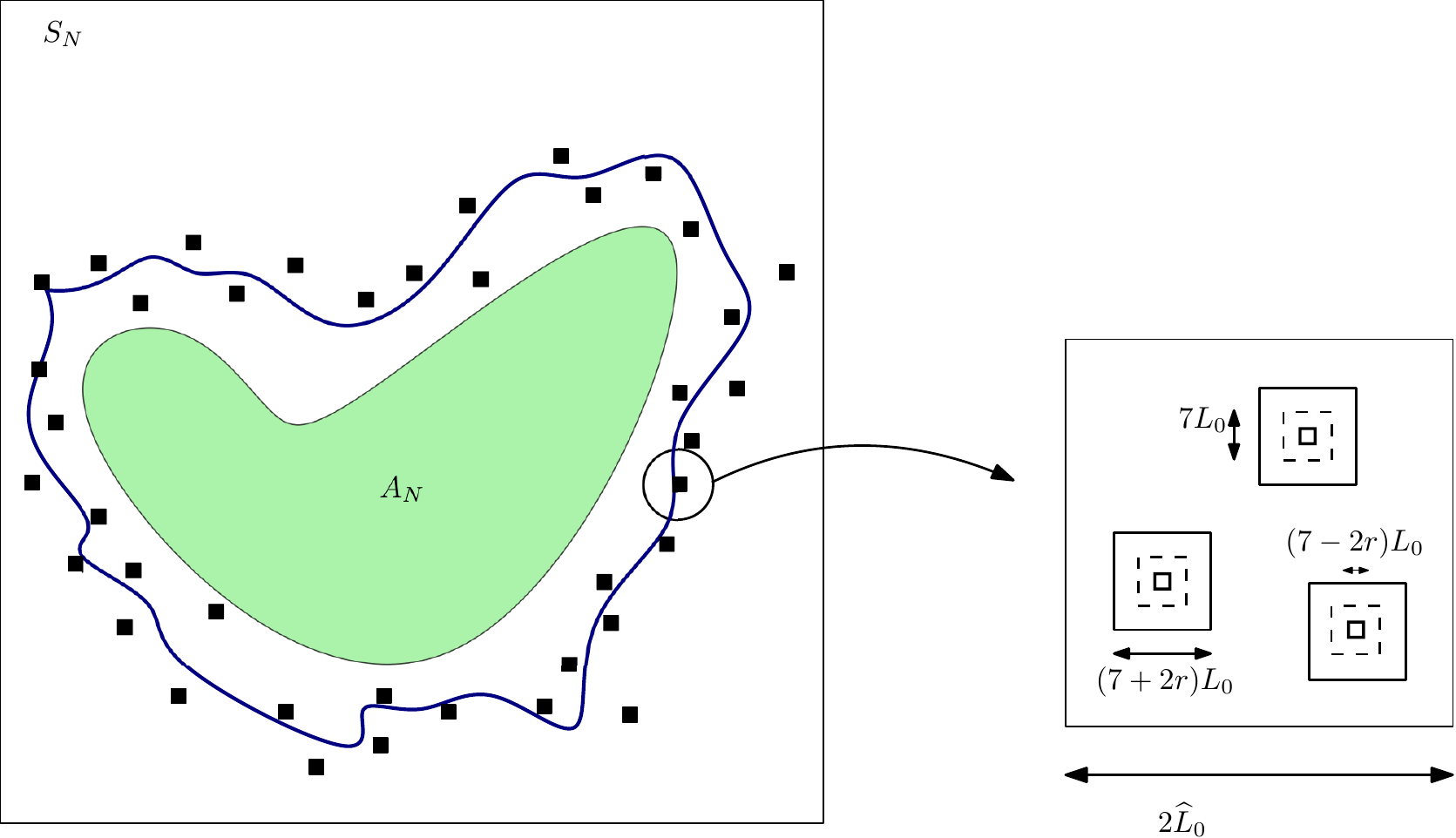} 
  \caption{Informal illustration of the boxes present for a given $\kappa \in \cK_N$, with the set of selected boxes of side-length $2\widehat{L}_0$ surrounding $A_N$ and the blow-up of one such box with selected boxes $D_z$ of size $7L_0$ (dashed lines) and slight enlargements/diminutions. The scaled $\bbR^d$-fillings of the dashed boxes constitute $\Sigma$, while the scaled $\bbR^d$-fillings of the enlarged/diminished boxes constitute $\Sigma^{(r)}$ and $\Sigma_{(r)}$, respectively.}
 \end{figure}

Note that for every $r \in (0,3)$, $\Sigma_{(r)} \subseteq \Sigma \subseteq \Sigma^{(r)} $, and $\Sigma_{(r)} $ and $ \Sigma^{(r)}$ are an enlargement and a diminution respectively, of the scaled $\bbR^d$-filling $\Sigma$ of $C$, and all three can be seen as `porous interfaces' for the `segmentation' $U_0$ in the sense of \eqref{eq:SegmentationClass} and \eqref{eq:ClassofporousInterf} (with the choice $\varepsilon = 10 \tfrac{\widehat{L}_0}{N}$). It should be noted that the occupation time bounds that will be developed in Step 4 force us to work with the boxes $D_z$, $z \in \cC$ as opposed to $B_z$, $z \in \cC$ in \cite{nitzschner2017solidification, sznitman2018macroscopic} to eventually build up the `porous interfaces' $\Sigma$.
This brings us to the main decomposition of this section, namely the coarse-graining
\begin{equation}\label{eq:coarseGraining}
\widetilde{\cD}^\alpha_N = \bigcup_{\kappa \in \cK_N} \cD_{N,\kappa}, \ \text{ where } \cD_{N,\kappa} = \widetilde{\cD}^\alpha_N \cap \{ \kappa_N = \kappa \}.
\end{equation}

We finish this step by introducing the set of `good' configurations $\cK^\mu_N$ for a given real number $\mu > 0$. In essence, we will declare a configuration $\kappa$ to be in $\cK^\mu_N$, if the harmonic potential $\scrh_{\Sigma}$ is `close' to $\scrh_{\mathring{A}}$ as measured by the Dirichlet form. The formal definition is
\begin{equation}
\label{eq:GoodInterfacesDef}
\cK^\mu_N = \{ \kappa \in \cK_N : \cE(\scrh_{\Sigma} - \scrh_{\mathring{A}})  \leq \mu \}.
\end{equation}

\paragraph{\textbf{Step 3. Uniform replacement of $\scrM^u_{\mathring{A}}$ by $\cM^u_C$}}
For the interfaces $C$ attached to $\kappa \in \cK^\mu_N$ (see \eqref{eq:UnionBound}), we define a discrete approximation to $\scrM^u_{\mathring{A}}$, namely the function 
\begin{equation}
\label{eq:DiscreteProfileBoxes}
\cM^u_C(x) = (\sqrt{u} + (\sqrt{\overline{u}} - \sqrt{u}) h_C(x))^2, \qquad x \in \bbZ^d
\end{equation}
(recall the definition of $h_C$ from~\eqref{eq:harmPotentialDiscrete}). Our aim is to show that for large $N, K$, uniformly in $\kappa \in \cK^\mu_N$, $\cM^u_{C}$ is (after scaling) a good approximation of $\scrM^u_{\mathring{A}}$. More precisely we show the following.
\begin{prop}
\label{ReplacementProp} For any fixed $\mu >0$, and $K \geq 100$ large enough (depending on $\mu$)
    \begin{equation}
        \label{eq:ReplacementLALC}
            \limsup_{N}\sup_{\kappa\in \cK^\mu_N} \sup_{\substack{\|\eta\|_\infty\leq 1\\ \eta\in \lip_1(B_R)}} \Big| \frac{1}{N^d} \sum_{x\in \bbZ^d} \cM^u_{C} (x) \eta(\tfrac{x}{N})-\langle \scrM^u_{\mathring{A}}, \eta\rangle\Big|\leq  8 E(\mathbbm{1}_{B_R})^{1/2} \mu^{1/2},
        \end{equation}
where $E$ was defined below~\eqref{eq:CSEnergies}.
\end{prop}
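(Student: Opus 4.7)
The plan is to factor the squared-profile difference into a product involving $h_C(x)-\scrh_{\mathring A}(x/N)$, insert the continuous harmonic potential $\scrh_\Sigma(x/N)$ as an intermediary, and control the two resulting pieces by separate techniques: the ``discrete versus continuous'' piece by Einmahl's strong approximation together with gradient estimates for bounded harmonic functions, and the ``continuous versus $\mathring A$'' piece by the Cauchy--Schwarz inequality \eqref{eq:CSEnergies} combined with the defining property of $\cK^\mu_N$.

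Applying $a^2-b^2=(a-b)(a+b)$ with $a=\sqrt u+(\sqrt{\overline u}-\sqrt u)h_C(x)$ and $b=\sqrt u+(\sqrt{\overline u}-\sqrt u)\scrh_{\mathring A}(x/N)$, and using $h_C,\scrh_{\mathring A}\in[0,1]$, one obtains the pointwise estimate
\begin{equation*}
|\cM^u_C(x)-\scrM^u_{\mathring A}(\tfrac{x}{N})| \leq 2\sqrt{\overline u}(\sqrt{\overline u}-\sqrt u)\,|h_C(x)-\scrh_{\mathring A}(\tfrac{x}{N})|.
\end{equation*}
Inserting $\scrh_\Sigma(x/N)$ inside the difference, it then suffices to establish, uniformly in $\kappa\in\cK^\mu_N$ and in $\eta\in\lip_1(B_R)$ with $\|\eta\|_\infty\leq 1$, that $I_1\to 0$ and $\limsup_N |I_2|\leq c\,E(\mathbbm{1}_{B_R})^{1/2}\mu^{1/2}$, where
\begin{equation*}
I_1 := \tfrac{1}{N^d}\sum_{x\in\bbZ^d}|h_C(x)-\scrh_\Sigma(\tfrac{x}{N})|\,|\eta(\tfrac{x}{N})|, \qquad I_2 := \tfrac{1}{N^d}\sum_{x\in\bbZ^d}(\scrh_\Sigma-\scrh_{\mathring A})(\tfrac{x}{N})\,\eta(\tfrac{x}{N}),
\end{equation*}
and $c$ is a dimensional constant. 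Tracking all constants then produces the $8E(\mathbbm{1}_{B_R})^{1/2}\mu^{1/2}$ in \eqref{eq:ReplacementLALC}.

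For $I_2$ the argument is purely deterministic: $I_2$ is a Riemann sum at scale $1/N$ for $\int_{B_R}(\scrh_\Sigma-\scrh_{\mathring A})\eta\,dx$, with error $o_N(1)$ uniform in $\eta$ (the integrand is bounded by $1$ and smooth off $\partial\Sigma$, whose Lebesgue measure vanishes). Then \eqref{eq:CSEnergies} applied with $f=\eta\mathbbm{1}_{B_R}$ and $g=\scrh_\Sigma-\scrh_{\mathring A}$, together with positivity of the Brownian Green kernel to bound $E(\eta\mathbbm{1}_{B_R})\leq E(\mathbbm{1}_{B_R})$, and the defining inequality \eqref{eq:GoodInterfacesDef} of $\cK^\mu_N$, yield the required bound for $I_2$.

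For $I_1$, which is the main obstacle, I would follow the outline and invoke Einmahl's strong approximation \cite{einmahl1989extensions} to couple, for each $x$ in a large fixed box containing $\mathring A$ and $\Sigma$, the continuous-time walk under $P_x$ with a Brownian motion under $W_{x/N}$, so that after rescaling time by $N^2$ the paths stay within $O(N^{-1}\log N)$ up to times much larger than the typical hitting time of $\Sigma$, with overwhelming probability. Because $\Sigma$ is the scaled $\bbR^d$-filling of $C$, the events $\{H_C<\infty\}$ for the walk and $\{\widetilde H_\Sigma<\infty\}$ for the Brownian motion then differ only on a ``corridor'' of thickness $O(N^{-1}\log N)$ around $\partial\Sigma$, on which one uses the trivial bound $|h_C-\scrh_\Sigma|\leq 1$ and pays only its Lebesgue measure. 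Off this corridor, gradient estimates for the bounded harmonic function $\scrh_\Sigma$ on $\bbR^d\setminus\Sigma$ convert the $O(N^{-1}\log N)$ spatial coupling error into a pointwise $o_N(1)$ bound on $|h_C(x)-\scrh_\Sigma(x/N)|$. The hardest point is uniformity over $\kappa\in\cK^\mu_N$: the $(d-1)$-dimensional surface area of $\partial\Sigma$ must be bounded independently of $\kappa$. This is achieved because every $\Sigma$ is a union of congruent cubes $D_z/N$, $z\in\cC$, whose centers lie at mutual distance at least $\overline K L_0/N$, so that $|\partial\Sigma|_{d-1}$ is dominated by $|\cC|(L_0/N)^{d-1}$, which is uniformly $O(1)$ once $K$ is large. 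Collecting the bounds for $I_1$ and $I_2$ with the prefactor $2\sqrt{\overline u}(\sqrt{\overline u}-\sqrt u)$ and absorbing numerical constants yields \eqref{eq:ReplacementLALC}.
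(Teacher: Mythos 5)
Your overall skeleton (factor $a^2-b^2$, compare $h_C$ with a continuum potential via the Einmahl coupling, then use \eqref{eq:CSEnergies} together with \eqref{eq:GoodInterfacesDef}) matches the paper's outline, and your treatment of $I_2$ is essentially sound -- in fact more direct than the paper's, since you test against $\scrh_{\Sigma}$ itself and can invoke $\cE(\scrh_{\Sigma}-\scrh_{\mathring{A}})\leq\mu$ verbatim (the paper instead works with enlarged/diminished interfaces $\Sigma^{(r)},\Sigma_{(r)}$ and must pay for this with Proposition~\ref{ComparisonCapacities}, which is exactly where the hypothesis ``$K$ large depending on $\mu$'' is used; your Riemann-sum uniformity claim also needs the quantitative inputs $\sup_{\kappa}|C|=o(N^d)$ and the gradient bound $cN/L_0$ off a neighborhood of $\Sigma$, rather than just ``$\partial\Sigma$ has measure zero'', since $\Sigma$ varies with $N$ and $\kappa$).

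The genuine gap is in $I_1$. The discrepancy between $\{H_C<\infty\}$ for the walk and $\{H_{\Sigma}<\infty\}$ for the Brownian motion is created \emph{along the trajectory} near $\partial\Sigma$, not at the starting point: even for $x/N$ far from the corridor, the coupled pair may approach $N\Sigma$ to within $O(\log N)$ with exactly one of the two hitting its target. Restricting the starting point away from a corridor and invoking gradient estimates for $\scrh_{\Sigma}$ cannot control this -- the gradient bound compares $\scrh_{\Sigma}$ at two nearby \emph{starting} points, which is not the quantity at stake, and it degenerates precisely near $\partial\Sigma$ where the discrepancy occurs. What is actually needed is a quantitative regularity/hitting estimate uniform over $\kappa$: from any point at distance $O(\log N)$ of $N\Sigma$ the Brownian motion (resp.\ the walk, for $C$) hits the target with probability $1-c(\log N/L_0)^{a}$, so that by the strong Markov property the one-sided errors are $o_N(1)$ (plus a truncation of late hitting times). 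The paper sidesteps this by comparing $h_C$ one-sidedly with $\scrh_{\Sigma^{(r)}}$ from above and $\scrh_{\Sigma_{(r)}}$ from below (the enlargement $rL_0\gg\log N$ swallows the coupling error automatically; this is the content of (A.4)--(A.5) of Proposition A.1 in \cite{chiarini2018entropic}), and then recovers the $\mu$-bound for $\cE(\scrh_{\Sigma^{(\pm r)}}-\scrh_{\mathring{A}})$ via Proposition~\ref{ComparisonCapacities}; note also that this forces the splitting into $\eta^{+},\eta^{-}$, since the comparisons are one-sided. Finally, your uniformity claim that $|\cC|(L_0/N)^{d-1}=O(1)$ is false: with $|\widetilde{\cS}|\leq c(N/\widehat{L}_0)^d$ and $|\widetilde{\cC}_x|\leq c(\widehat{L}_0/(KL_0))^{d-1}$ one only gets a bound of order $K^{-(d-1)}\gamma_N^{-1/2}$, which may diverge; the corridor volume still vanishes because the thickness $\log N/N$ beats this, but the argument as written does not establish it.
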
 
\begin{proof}
By replacing $\eta$ with $-\eta$, we see that it suffices to obtain an upper bound for
\begin{equation}\label{eq:first_LA2LC}
    \begin{aligned}
    \frac{1}{N^d} \sum_{x\in \bbZ^d} \cM^u_{C} (x) \eta(\tfrac{x}{N})-\langle \scrM^u_{\mathring{A}}, \eta\rangle &= \Big(\frac{1}{N^d} \sum_{x\in \bbZ^d} \cM^u_{C} (x) \eta^+(\tfrac{x}{N})-\langle \scrM^u_{\mathring{A}}, \eta^+\rangle\Big) \\ 
    &- \Big(\frac{1}{N^d} \sum_{x\in \bbZ^d} \cM^u_{C} (x) \eta^-(\tfrac{x}{N})-\langle \scrM^u_{\mathring{A}}, \eta^-\rangle\Big).    
\end{aligned}
\end{equation}
This requires an upper bound on the first term on the right-hand side of~\eqref{eq:first_LA2LC} and a lower bound on the second term. We will give details only for the first contribution since the lower bound on the second contribution is obtained with a similar argument.

Combining the definitions of $\scrM^u_{\Sigma^{(r)}}$ in \eqref{eq:DefinitionScrL} and $\cM^u_C$ in \eqref{eq:DiscreteProfileBoxes} together with the fact that $u \in (0,\overline{u})$ and $h_C, \scrh_{\Sigma^{(r)}} \leq 1$, we obtain the inequality
\begin{equation}
 \cM^u_{C} (x) -  \scrM^u_{\Sigma^{(r)}} (\tfrac{x}{N}) \leq 2 \overline{u} ( h_C(x) - \scrh_{\Sigma^{(r)}}(\tfrac{x}{N})) \vee 0,
\end{equation} 
 from which one can easily infer that
 \begin{equation}\label{eq:strong coupling}
 \begin{split}
        \frac{1}{N^d} \sum_{x\in \bbZ^d} \cM^u_{C} (x)& \eta^+(\tfrac{x}{N})  \leq \frac{1}{N^d} \sum_{x\in \bbZ^d} \scrM^u_{\Sigma^{(r)}} (\tfrac{x}{N}) \eta^+(\tfrac{x}{N}) \\
        &+ \sup_{\kappa \in \cK_N^\mu} \sup_{x\in B_{R,N}} 2\overline{u} \frac{|B_{R,N}|}{N^d} ( h_C(x) - \scrh_{\Sigma^{(r)}}(\tfrac{x}{N})) \vee 0.
        \end{split}
\end{equation}
An application of (A.5) in~\cite[Proposition A.1]{chiarini2018entropic}, which relies on a strong coupling result of~\cite{einmahl1989extensions} between the simple random walk and Brownian motion, shows that the second summand of~\eqref{eq:strong coupling} converges to zero as $N\to \infty$ (in fact, one has to use a slight modification of this statement).

We now provide a gradient estimate to approximate the first summand in~\eqref{eq:strong coupling} by an integral, similar as in (4.32)--(4.35) of~\cite{chiarini2018entropic}. 
Let $\Sigma^{(r),\ast}$ and $\Sigma^{(r),\ast}_2$ be the enlargements of $\Sigma^{(r)}$ by $\tfrac{L_0}{N}$ and $\tfrac{L_0}{2N}$ respectively, that is
\begin{equation}
    \Sigma^{(r),\ast} = \{x\in \bbR^d:\, d(x,\Sigma^{(r)})\leq \tfrac{L_0}{N}\},\quad \Sigma^{(r),\ast}_2  = \{x\in \bbR^d:\, d(x,\Sigma^{(r)})\leq \tfrac{L_0}{2N}\}. 
\end{equation}
Using the fact that $\scrh_{\Sigma^{(r)}}$ is harmonic outside $\Sigma^{(r),\ast}_2$ and Theorem 2.10 in~\cite{gilbarg2015elliptic} yields the gradient bound
\begin{equation}\label{eq:gradientestimate}
    \sup_{x\in (\Sigma^{(r),\ast}_2)^c} |\nabla \scrM^u_{\Sigma^{(r)}} (x)| \leq \sup_{x\in (\Sigma^{(r),\ast}_2)^c} 2(\overline{u} - \sqrt{\overline{u} u}) |\nabla \scrh_{\Sigma^{(r)}}(x)| \leq c \frac{N}{L_0}.
\end{equation}
We proceed with the approximation of the sum in~\eqref{eq:strong coupling} by an integral. To this end, we write
\begin{equation}
\label{eq:Step3_part1}
    \frac{1}{N^d} \sum_{x\in \bbZ^d} \scrM^u_{\Sigma^{(r)}} (\tfrac{x}{N}) \eta^+(\tfrac{x}{N}) \leq \int \scrM^u_{\Sigma^{(r)}} (x) \eta^+(x)\,\De x + \sup_{\kappa\in \cK_N^\mu}  \sup_{\substack{\|\eta\|_\infty\leq 1\\ \eta\in \lip_1(B_R)}} \cR^N_{\eta,\kappa},
\end{equation}
where 
\begin{equation}
    \begin{aligned}
        \cR^N_{\eta,\kappa} &= \frac{1}{N^d} \sum_{y \in \bbZ^d : \tfrac{y}{N} \in \Sigma^{(r),\ast}} \eta^+(\tfrac{y}{N}) \scrM^u_{\Sigma^{(r)}} (\tfrac{y}{N})\\ & + \sum_{y \in \bbZ^d :\tfrac{y}{N} \notin \Sigma^{(r),\ast}} \int_{\big[\tfrac{y}{N},\tfrac{y+1}{N}\big)^d} \scrM^u_{\Sigma^{(r)}}(x) |\eta^+(\tfrac{y}{N})-\eta^+(x) |\,\De x\\ 
        &+    \sum_{y \in \bbZ^d : \tfrac{y}{N} \notin \Sigma^{(r),\ast}} \eta^+(\tfrac{y}{N}) \int_{\big[\tfrac{y}{N},\tfrac{y+1}{N}\big)^d} |\scrM^u_{\Sigma^{(r)}}(x)-\scrM^u_{\Sigma^{(r)}}(\tfrac{y}{N})| \,\De x.
    \end{aligned}
\end{equation}
Furthermore, from this representation of the error term, we obtain
\begin{equation}
\begin{split}
\label{eq:Step_part2}
    \varlimsup_{N} & \sup_{\kappa\in \cK_N^\mu}  \sup_{\substack{\|\eta\|_\infty\leq 1\\ \eta\in \lip_1(B_R)}} \cR^N_{\eta, \kappa} \\\
    & \leq \varlimsup_{N}\sup_{\kappa\in \cK_N^\mu}  \sup_{\substack{\|\eta\|_\infty\leq 1\\ \eta\in \lip_1(B_R)}} \Big( \frac{c|C|}{N^d}\|\eta\|_{\infty} + \frac{c}{N}\lip(\eta) + \frac{cN}{L_0}\|\eta\|_{\infty} \Big) = 0,
    \end{split}
\end{equation}
where we used that $\sup_{\kappa \in \cK^\mu_N} |C| = o(N^d)$ by construction (see e.g.~the argument in (3.39) of~\cite{chiarini2018entropic}) for the first summand, the Lipschitz continuity of $\eta$ for the second contribution and the fact that $\bigcup_{y\in \bbZ^d: y/N \notin \Sigma^{(r),\ast}} [\tfrac{y}{N}, \tfrac{y+1}{N}) \subseteq (\Sigma^{(r),\ast}_2)^c$ which allows the application of the gradient estimate \eqref{eq:gradientestimate} for the third summand.

Combining~\eqref{eq:strong coupling} and~\eqref{eq:Step3_part1}, we finally get
\begin{equation}\label{eq:upperbound_step3}
    \begin{aligned}
        \tfrac{1}{N^d} \sum_{x\in \bbZ^d} & \cM^u_{C} (x) \eta^+(\tfrac{x}{N}) - \int \scrM^u_{\mathring{A}}(x) \eta^+(x)\, \De x \\
        &  \leq \int (\scrM^u_{\Sigma^{(r)}}(x)-\scrM^u_{\mathring{A}}(x)) \eta^+(x)\,\De x + \cR_N \\
        &\leq 2 \overline{u} \int |\scrh_{\Sigma^{(r)}}(x)-\scrh_{\mathring{A}}(x)| \eta^+(x)\, \De x + \cR_N \\
        &\leq  2 \overline{u}E(|\eta|)^{1/2} \cE(\scrh_{\Sigma^{(r)}}-\scrh_{\mathring{A}})^{1/2} + \cR_N,
    \end{aligned}
\end{equation}
with the definition 
\[
    \cR_N = \sup_{\kappa \in \cK_N^\mu} \sup_{\substack{\|\eta\|_\infty\leq 1\\ \eta\in \lip_1(B_R)}} \cR^N_{\eta,\kappa} + \sup_{\kappa \in \cK_N^\mu} \sup_{x\in B_{R,N}} 2\overline{u} \frac{|B_{R,N}|}{N^d} ( h_C(x) - \scrh_{\Sigma^{(r)}}(\tfrac{x}{N})) \vee 0,
\] where in the last step we used~\eqref{eq:CSEnergies} and $E(\eta^+)\leq E(|\eta|)$. 

Via the same ideas (in particular using (A.4) in~\cite[Proposition A.1]{chiarini2018entropic}), we can show that
\begin{equation}\label{eq:lowerbound_step3}
\begin{split}
        \tfrac{1}{N^d} \sum_{x\in \bbZ^d}  \cM^u_{C} (x) \eta^-(\tfrac{x}{N}) & - \int \scrM^u_{\mathring{A}}(x) \eta^-(x)\, \De x \\
        &  \geq 
 - 2\overline{u} \big(E(|\eta|) \cE(\scrh_{\Sigma_{(r)}}-\scrh_{\mathring{A}})\big)^{1/2} - \cR'_N,
 \end{split}
\end{equation}
where $\cR'_N\to 0$ is independent of $\kappa\in \cK_N^\mu$ and $\eta\in \lip_1(B_R)$ with $\|\eta\|_\infty\leq 1$. 

Moreover, we note that for any $\kappa \in \cK_N^\mu$, one has
\begin{equation}
\label{eq:ReplacementOfEnlargedBoxes}
\begin{split}
\cE(\scrh_{\Sigma^{(r)}} - \scrh_{\mathring{A}})  &\leq 2\cE(\scrh_{\Sigma^{(r)}} - \scrh_{\Sigma}) + 2\cE(\scrh_{\Sigma} - \scrh_{\mathring{A}}) \\
& \leq 2(\capa(\Sigma^{(r)}) - \capa(\Sigma)) + 2\mu \\
& \leq 2 \capa(\Sigma) [\delta_{N,K}(1 + o_r(1)) -1] + 2\mu 
\\ &\leq c M^{d-2}  [\delta_{N,K}(1 + o_r(1)) -1]  + 2\mu,
\end{split}
\end{equation}
where $\delta_{N,K}\geq 0$ is a sequence fulfilling $\delta_{N,K} \rightarrow 1$ as $N,K \rightarrow \infty$, where we used \eqref{eq:GoodInterfacesDef} and the fact that $\Sigma \subseteq \Sigma^{(r)}$ in the second inequality, Proposition~\ref{ComparisonCapacities} in the third and $\capa(\Sigma) \leq \capa([-M,M]^d) = c M^{d-2}$ (by scaling) in the last. With the same argument, one can see that for any $\kappa \in \cK_N^\mu$:
\begin{equation}
\label{eq:ReplacementOfEnlargedBoxes2}
\begin{split}
\cE(\scrh_{\Sigma_{(r)}} - \scrh_{\mathring{A}}) &  \leq c M^{d-2}  [\delta_{N,K}(1 + o_r(1)) -1] + 2\mu.
\end{split}
\end{equation}
Upon inserting~\eqref{eq:upperbound_step3} and~\eqref{eq:lowerbound_step3} into~\eqref{eq:first_LA2LC}, we obtain
\begin{equation}
    \begin{aligned}
       & \varlimsup_{N} \widehat{\sup} \Big( \frac{1}{N^d} \sum_{x\in \bbZ^d} \cM^u_{C} (x) \eta(\tfrac{x}{N})-\langle \scrM^u_{\mathring{A}}, \eta\rangle \Big) \\ &\leq \varlimsup_{N} \Big( \widehat{\sup}\, 2\overline{u} E(|\eta|)^{1/2}  \big(\cE(\scrh_{\Sigma^{(r)}}-\scrh_{\mathring{A}})^{1/2} +  \cE(\scrh_{\Sigma_{(r)}}-\scrh_{\mathring{A}})^{1/2}\big) + \cR'_N+\cR_N \Big) \\ & 
        \stackrel{\eqref{eq:ReplacementOfEnlargedBoxes}, \eqref{eq:ReplacementOfEnlargedBoxes2}}{\leq}  4\overline{u} E(\mathbbm{1}_{B_R})^{1/2}  (c M^{d-2}   [\varlimsup_N \delta_{N,K}(1 + o_r(1)) -1]  + 2\mu) ^{1/2} \\
        & \leq 8\overline{u}E(\mathbbm{1}_{B_R})^{1/2} \mu^{1/2},
    \end{aligned}
\end{equation}
where we defined $\widehat{\sup}(\cdot) = \sup_{\kappa\in \cK_N^\mu}\sup_{\substack{\|\eta\|_\infty\leq 1\\ \eta\in \lip_1(B_R)}}(\cdot)$ and in the last step, $K \geq 100$ was chosen large enough and $r > 0$ small enough, which is what we wanted to show.
\end{proof}

\paragraph{\textbf{Step 4. Occupation-time bounds}}

    We consider $L_0$ as in \eqref{eq:ScalesL0}, $K\geq 100$ an integer and let
    $\cC$ be a non-empty finite subset of $\bbL_0$ with points at mutual distance at least $\overline{K} L_0$, where $\overline{K}=2K+3$ (for instance, the $\cC$ attached to a choice $\kappa \in \cK_N$ via~\eqref{eq:UnionBound} fulfills this condition).
    For a given $\cC$ as above we set $C = \bigcup_{z\in \cC} D_z$. We will often write $D\in \cC$ meaning $D=D_z$ with $z\in \cC$.

    The main result of this step is Proposition~\ref{PropOccTimeBounds} below which should be compared to Theorem 4.2 of~\cite{sznitman2017disconnection}. This result plays the role of Proposition 4.3 in~\cite{chiarini2018entropic}, where the notion of $h$-good box corresponds to $N_u(D)\geq \beta \capa_{\bbZ^d} (D)$ in the present context. Whereas Gaussian bounds (in particular the Borell-TIS inequality) were central in~\cite{chiarini2018entropic}, here instead we rely on the Laplace transform of the occupation times of random interlacements. 
    \begin{prop}
    \label{PropOccTimeBounds}
    Let $\alpha>\beta>\gamma $ in $(u,\overline{u})$ and $\zeta:\bbR^d \to \bbR$ such that $\|\zeta\|_\infty\leq 1$ and $\supp \zeta \subseteq B_R$.
    Define the event
    \begin{equation}
    \label{eq:GammaDef}
        \Gamma :=\bigcap_{z\in \cC} \{\text{$B_z$ is \normalfont{good}$(\alpha,\beta,\gamma)$ \textit{and} $N_u(D_z)\geq \beta \capa_{\bbZ^d}(D_z)$}\}.
    \end{equation}
    Then, there exists a function $K\mapsto \widehat{\varepsilon}(K)> 0$ such that $\widehat{\varepsilon}(K)\to 0$ as $K\to \infty$  and a positive constant $c_3(\Delta, R)$ such that 
    \begin{equation}\label{eq:exponentialbound}
        \begin{aligned}
     &\bbP\bigg[\Big|\langle \scrL_{N,u},\zeta \rangle - \tfrac{1}{N^d} \sum_{x\in \bbZ^d} \cM^u_C(x) \zeta(\tfrac{x}{N})\Big| \geq   \Delta;\, \Gamma\bigg]\\
     &\leq 2e^{ - \big[(\sqrt{\gamma} - \sqrt{u})^2 - \overline{u}\, \widehat{\varepsilon}(K) \big]\capa_{\bbZ^d}(C) - \big[c_3(\Delta, R) \sqrt{u} - c\, R^d  (\overline{u}-\gamma) \big] N^{d-2} }.
        \end{aligned}
    \end{equation}
    \end{prop}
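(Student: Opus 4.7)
The plan is to apply the Laplace-transform bound from Corollary~\ref{Cor_OccTime} with the potential $V = ae_C$ for $a = 1 - \sqrt{u/\gamma} \in (0,1)$, perturbed in the direction of the test function. By Remark~\ref{Remark_CalculationGamma}(ii), $\gamma_V(x) = 1 + (\sqrt{\gamma/u}-1)h_C(x)$, hence $u\gamma_V(x)^2 = (\sqrt{u}+(\sqrt{\gamma}-\sqrt{u})h_C(x))^2$. Since $h_C \in [0,1]$ and $u<\gamma<\overline{u}$, one has $0 \leq \cM^u_C(x) - u\gamma_V(x)^2 \leq \overline{u} - \gamma$ uniformly, producing the replacement error of order $cR^d(\overline{u}-\gamma)$ when tested against $\zeta(\cdot/N)/N^d$, accounting for the stated $cR^d(\overline{u}-\gamma)$ term. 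Moreover, $e_C$ is supported on $C$ where $h_C = 1$, so $\langle e_C, \gamma_V\rangle_{\bbZ^d} = \sqrt{\gamma/u}\,\capa_{\bbZ^d}(C)$, and by~\eqref{eq:DirichletFormGammaV}, $u\cE_{\bbZ^d}(\gamma_V-1) = (\sqrt{\gamma}-\sqrt{u})^2\capa_{\bbZ^d}(C)$, providing the main capacity term in the exponent.

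The key input from $\Gamma$ I would extract is a lower bound
\begin{equation}
    \label{eq:proposalLower}
    \langle \cL_u, e_C\rangle_{\bbZ^d} \geq (1 - \widehat{\varepsilon}(K))\,\gamma\,\capa_{\bbZ^d}(C) \quad \text{on } \Gamma,
\end{equation}
with $\widehat{\varepsilon}(K) \to 0$ as $K\to\infty$. This combines the two ingredients of $\Gamma$: the condition $N_u(D_z) \geq \beta\capa_{\bbZ^d}(D_z)$ ensures sufficiently many excursions through each $D_z$, while the local-time part of the good$(\alpha,\beta,\gamma)$ condition provides an $e_{D_z}$-weighted occupation time of order $\gamma\capa_{\bbZ^d}(D_z)$. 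The $\overline{K}L_0$-separation of the boxes, together with the asymptotic capacity comparison of Proposition~\ref{ComparisonCapacities}, allows one to identify $e_C$ and $\sum_{z\in\cC}e_{D_z}$ up to a multiplicative factor $1 + o_K(1)$, and summing yields~\eqref{eq:proposalLower}. The $\overline{u}\widehat{\varepsilon}(K)\capa_{\bbZ^d}(C)$ correction in the stated exponent arises from this step.

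Granting~\eqref{eq:proposalLower}, for the upper-tail event I would set $\eta(x) = \zeta(x/N)$, $\delta = c_3(\Delta,R)\sqrt{u}N^{-2}$ with $c_3$ small enough that~\eqref{eq:ConditionsGV2} holds and the remainder $u\cR_{\delta,\eta,V}$ is dominated by $\tfrac{1}{2}\delta N^d\Delta$, and $V' = V + \delta\eta$. Combining~\eqref{eq:proposalLower} with the hypothesis $\langle\scrL_{N,u},\zeta\rangle - \tfrac{1}{N^d}\sum_x\cM^u_C(x)\zeta(x/N) \geq \Delta$ and the replacement error from the first paragraph gives
\begin{equation*}
    a\langle \cL_u, e_C\rangle_{\bbZ^d} + \delta\langle \cL_u, \eta\rangle_{\bbZ^d} \geq u\langle V',\gamma_V^2\rangle_{\bbZ^d} + \delta N^d\Delta - \overline{u}\widehat{\varepsilon}(K)\capa_{\bbZ^d}(C) - c\delta N^d R^d(\overline{u}-\gamma).
\end{equation*}
Applying Corollary~\ref{Cor_OccTime} to this event and using $\delta N^d = c_3\sqrt{u}N^{d-2}$ yields the claimed exponent. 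The lower tail is handled symmetrically with $V' = V - \delta\eta$, producing the factor of $2$ in the stated bound.

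The main obstacle will be justifying~\eqref{eq:proposalLower} with the sharp leading constant $\gamma\capa_{\bbZ^d}(C)$ and error of only order $\widehat{\varepsilon}(K)$; the difficulty lies in combining the local-time lower bound on $\partial_{\mathrm{in}}D_z$ (given directly by the goodness condition) with the potential-theoretic comparison between $e_C$ and $\sum_z e_{D_z}$, which becomes tight only for $\overline{K}L_0$-separated boxes. Secondary care is needed in choosing $c_3$ small enough in view of the bounds $\|\gamma_V\|_\infty \leq c\sqrt{\gamma/u}$, $\|(I-G|V|)^{-1}G|\eta|\|_\infty = O(\sqrt{\gamma/u}\,R^2 N^2)$, and $\langle|\eta|,1\rangle_{\bbZ^d} = O(R^d N^d)$, so that $u\cR_{\delta,\eta,V} = O(c_3^2\sqrt{u}R^{d+2}\gamma^{3/2}N^{d-2})$ is absorbed into half the $\delta N^d\Delta$ term.
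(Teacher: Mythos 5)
Your proposal is correct and follows essentially the same route as the paper: the same potential $V=(1-\sqrt{u/\gamma})\,e_C$ with $\gamma_V=1+\tfrac{a}{1-a}h_C$, the same encoding of $\Gamma$ into a lower bound $\langle \cL_u,e_C\rangle_{\bbZ^d}\geq \gamma\,\capa_{\bbZ^d}(C)/(1+\widehat{\varepsilon}(K))$, the same comparison $|\cM^u_C-u\gamma_V^2|\leq \overline{u}-\gamma$ producing the $cR^d(\overline{u}-\gamma)$ term, the same perturbation $V'=V\pm\delta N^{-2}\zeta(\cdot/N)$ with $\delta\asymp\sqrt{u}$ so that Corollary~\ref{Cor_OccTime} applies and the remainder is absorbed into half of $\delta N^{d-2}\Delta$, and the same union bound giving the factor $2$. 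The only slip is in the step you flag as the main obstacle: your lower bound on $\langle \cL_u,e_C\rangle_{\bbZ^d}$ does not rest on Proposition~\ref{ComparisonCapacities} (which compares Brownian capacities of enlarged boxes), but follows directly from the per-box occupation bound $\langle \cL_u,e_{D_z}\rangle_{\bbZ^d}\geq\gamma\,\capa_{\bbZ^d}(D_z)$ on $\Gamma$, i.e.\ (4.9) of \cite{sznitman2017disconnection}, combined with the equilibrium-measure comparison $\sum_{D\in\cC}e_C(D)\,\overline{e}_D\leq(1+\widehat{\varepsilon}(K))\,e_C$ from (4.6) there (in the spirit of \eqref{eq:PerturbationformulaEqMeasures}), which is exactly how the paper proceeds.
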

    \begin{proof}
    In order to prove~\eqref{eq:exponentialbound}, it suffices show that 
   \begin{equation}\label{eq:exponentialboundwithoutAbsVal}
        \begin{aligned}
     &\bbP\bigg[\langle \scrL_{N,u},\zeta \rangle - \tfrac{1}{N^d} \sum_{x\in \bbZ^d} \cM^u_C(x) \zeta(\tfrac{x}{N}) \geq   \Delta;\, \Gamma\bigg]\\
     &\leq e^{ - \big[(\sqrt{\gamma} - \sqrt{u})^2 - \overline{u}\, \widehat{\varepsilon}(K) \big]\capa_{\bbZ^d}(C) - \big[c_3(\Delta, R) \sqrt{u} - c\, R^d  (\overline{u}-\gamma) \big] N^{d-2}}.
        \end{aligned}  
\end{equation}
    In fact an application of~\eqref{eq:exponentialboundwithoutAbsVal} to $\zeta$ and $-\zeta$, and a union bound to deal with the absolute value in~\eqref{eq:exponentialbound}, readily yields the conclusion.

    The proof of~\eqref{eq:exponentialboundwithoutAbsVal} will follow as an application of 
    Corollary~\ref{Cor_OccTime} to a perturbation of the potential 
    \begin{equation}
    \label{eq:VequalsaEc}
        V(x) = \frac{\sqrt{\gamma}-\sqrt{u}}{\sqrt{\gamma}} e_C(x), \qquad x\in \bbZ^d.
    \end{equation}
    For later convenience, we set $a = (\sqrt{\gamma}-\sqrt{u})/\sqrt{\gamma}$ so that $V = a e_C$. Notice that $a\in (0,1)$ since $\gamma\in (u,\overline{u})$. 
We start by encoding the event under the probability on the left-hand side of~\eqref{eq:exponentialbound} in an event involving a functional of a perturbation $V'$ of $V$, see \eqref{eq:encoded}.
Recall the notation $\cL_u = \sum_{x \in \bbZ^d} L_{x,u} \delta_{x} (= \scrL_{1,u})$, so that for a function $\zeta:\bbR^d \to \bbR$ one has
\begin{equation}
\label{eq:ScalarProductTwoLs}
    \langle \scrL_{N,u}, \zeta \rangle = \langle \cL_u, \tfrac{1}{N^d}\zeta(\tfrac{\cdot}{N})\rangle_{\bbZ^d}.  
\end{equation}
    If $B = B_z$, with $z\in \cC$, is good$(\alpha,\beta,\gamma)$ and $N_u(D)\geq \gamma \capa_{\bbZ^d}(D) $ (with $D=D_z$), then one has
    \begin{equation}\label{eq:occ_of_D}
        \langle \cL_u, e_D \rangle_{\bbZ^d} \geq \gamma \capa_{\bbZ^d}(D),
    \end{equation}
    see also (4.9) in Theorem 4.2 of~\cite{sznitman2017disconnection}.
    Thus on the event $\Gamma$, multiplying~\eqref{eq:occ_of_D} by $e_C(D)/\capa_{\bbZ^d}(D)$ and summing over $D\in \cC$ yields
    \begin{equation}
        \langle \cL_u ,\sum_{D\in \cC} e_C(D) \overline{e}_D \rangle_{\bbZ^d} \geq \gamma \capa_{\bbZ^d}(C)        
    \end{equation}
 (recall that $\overline{e}_D$ is the normalized equilibrium measure of $D$, cf.~\eqref{eq:NormEqMeausre}).   
    
    Finally, we use that by (4.6) in~\cite{sznitman2017disconnection} for all $K$ there exists $\widehat{\varepsilon}(K)>0$ such that $\widehat{\varepsilon}(K)\to 0$ as $K\to \infty$ and
    \begin{equation}\label{eq:perturbed potential}
        \sum_{D\in  \cC} e_C(D) \overline{e}_D(x) \leq (1+\widehat{\varepsilon}(K))e_C(x),\quad x\in\bbZ^d.
    \end{equation}
    This leads to the inclusion of events
    \begin{equation}\label{eq:encoded_disc}
            \Gamma\subseteq \big\{  \langle \cL_u, e_C\rangle_{\bbZ^d} \geq \tfrac{ \gamma \capa_{\bbZ^d} (C)}{1+\widehat{\varepsilon}(K) }\big\}  =  \big\{ \langle \cL_u, V\rangle_{\bbZ^d} \geq  \tfrac{\sqrt{\gamma}(\sqrt{\gamma}-\sqrt{u}) \capa_{\bbZ^d} (C)}{1+\widehat{\varepsilon}(K)} \big\}.
    \end{equation}
    We introduce now for $\delta\in (0,1)$ a perturbation $V'$ of the potential $V$
    \begin{equation}
        V'(x) = V(x) + \delta N^{-2} \zeta(\tfrac{x}{N}),\qquad x\in \bbZ^d,
    \end{equation}
    (here $N^{-2} \zeta(\tfrac{x}{N})$ will play the role of $\eta$ in Corollary~\ref{Cor_OccTime}).
    In view of~\eqref{eq:ScalarProductTwoLs}, \eqref{eq:perturbed potential} and~\eqref{eq:encoded_disc} we obtain the following bound
    \begin{equation}\label{eq:encoded}
        \begin{aligned}
           &  \bbP\bigg[\langle \scrL_{N,u},\zeta \rangle \geq \tfrac{1}{N^d} \sum_{x \in \bbZ^d} \cM^u_C(x) \zeta(\tfrac{x}{N}) + \Delta;\,\Gamma\bigg] \\ 
            &\leq \bbP\bigg[\langle \cL_u, V\rangle_{\bbZ^d} \geq  \tfrac{\sqrt{\gamma}(\sqrt{\gamma}-\sqrt{u})\capa_{\bbZ^d} (C)}{1+\widehat{\varepsilon}(K)}  ; \langle \scrL_{N,u},\zeta \rangle \geq \tfrac{1}{N^d} \sum_{x \in \bbZ^d} \cM^u_C(x) \zeta(\tfrac{x}{N})+ \Delta\bigg]\\
            & \leq \bbP\bigg[\langle \cL_u, V'\rangle_{\bbZ^d} \geq  \tfrac{\sqrt{\gamma}(\sqrt{\gamma}-\sqrt{u})\capa_{\bbZ^d} (C)}{1+\widehat{\varepsilon}(K)}  + \delta \langle \cM^u_C, N^{-2} \zeta(\tfrac{\cdot}{N})\rangle_{\bbZ^d}  + \delta N^{d-2} \Delta\bigg].
        \end{aligned}
    \end{equation}
    Our next goal is to rewrite the right-hand side of~\eqref{eq:encoded} in a way that allows the application of Corollary~\ref{Cor_OccTime}.
    By (2) of Remark~\ref{Remark_CalculationGamma} with $a = (\sqrt{\gamma}-\sqrt{u})/\sqrt{\gamma}$, one has
    \begin{equation}\label{eq:formula1}
        \gamma_V = 1 + \frac{a}{1-a} h_C,\quad u\langle V,\gamma_V^2\rangle_{\bbZ^d} = \sqrt{\gamma} (\sqrt{\gamma}-\sqrt{u}) \capa_{\bbZ^d}(C),
    \end{equation}
    and that 
    \begin{equation}\label{eq:formula2}
        u \cE_{\bbZ^d}(\gamma_V-1,\gamma_V -1) = u \tfrac{a^2}{(1-a)^2} \cE_{\bbZ^d}(h_C,h_C) = (\sqrt{\gamma}-\sqrt{u})^2 \capa_{\bbZ^d}(C).
    \end{equation}
    Moreover, using~\eqref{eq:DiscreteProfileBoxes} and~\eqref{eq:formula1} (and that $ h_C(x) \in [0,1]$ for all $x \in \bbZ^d$) it follows that 
    \begin{equation}\label{eq:formula3}
        \big|\cM^u_C(x) - u \gamma_V^2(x)\big| \leq \overline{u}-\gamma, \qquad \text{ for all }x\in \bbZ^d.
    \end{equation}
    Using~\eqref{eq:formula1},~\eqref{eq:formula3}, $\supp \ \zeta \subseteq B_R$ and $\delta < 1$, we obtain the bounds:
    \begin{equation}\label{eq:batman}\begin{aligned}
        \tfrac{\sqrt{\gamma}(\sqrt{\gamma}-\sqrt{u})}{1+\widehat{\varepsilon}(K)}  \capa_{\bbZ^d} (C) &=  u\langle V,\gamma_V^2\rangle_{\bbZ^d} - \sqrt{\gamma}(\sqrt{\gamma}-\sqrt{u}) \capa_{\bbZ^d}(C) \tfrac{\widehat{\varepsilon}(K)}{1+\widehat{\varepsilon}(K)}\\
        &\geq u\langle V,\gamma_V^2\rangle_{\bbZ^d} - \overline{u} \capa_{\bbZ^d}(C) \widehat{\varepsilon}(K),
    \end{aligned}
    \end{equation}
    \begin{equation}\label{eq:robin}\begin{aligned}
        \delta \langle \cM^u_C, N^{-2} \zeta(\tfrac{\cdot}{N})\rangle_{\bbZ^d}
        &\geq u \delta \langle N^{-2} \zeta(\tfrac{\cdot}{N}), \gamma_V^2\rangle_{\bbZ^d} - c\, R^d N^{d-2} (\overline{u}-\gamma).
        \end{aligned}
    \end{equation}
    By means of~\eqref{eq:batman} and~\eqref{eq:robin}, the right hand side of~\eqref{eq:encoded} is bounded above by
    \begin{equation}\label{eq:doom}\bbP\big[\langle \cL_u, V'\rangle_{\bbZ^d} \geq  u\langle V,\gamma_V^2\rangle_{\bbZ^d} +  u \delta \langle N^{-2} \zeta(\tfrac{\cdot}{N}), \gamma_V^2\rangle_{\bbZ^d} +t \big],
    \end{equation}
    where we set
    \begin{equation}
        t := \delta N^{d-2}\Delta - \overline{u}\, \widehat{\varepsilon}(K) \capa_{\bbZ^d}(C) - c\, R^d  (\overline{u}-\gamma) N^{d-2}.
    \end{equation}
    In order to apply Corollary~\ref{Cor_OccTime} to~\eqref{eq:doom}, we are left with the verification of the assumptions. On the one hand, $G |V|(x) = a h_C(x) \in [0,1)$ for any $x \in \bbZ^d$, since $\gamma>u$. On the other hand, since $\|\zeta \|_\infty \leq 1$, one has
    \begin{equation}\label{eq:difference}
        \big|\delta N^{-2} G |\zeta|(\tfrac{\cdot}{N})(x)\big| \leq \delta N^{-2} \sum_{y\in B_{R,N}} g(x,y) \leq c R^2 \delta, \quad \text{for all }x \in \bbZ^d.
    \end{equation} 
    Therefore, using~\eqref{eq:VequalsaEc}, \eqref{eq:difference},  and that $h_C(x) \in [0,1]$ for all $x\in \bbZ^d$, we get for all $N\geq 1$
    \begin{equation}
        \|G |V'| \|_\infty\leq \|G|V|\|_\infty + \| \delta N^{-2} G |\zeta|(\tfrac{\cdot}{N})\|_\infty < 1 - \tfrac{\sqrt{u}}{\sqrt{\gamma}} +  c R^2\delta,
    \end{equation}
    \begin{equation}
        \|(I-G|V|)^{-1} \delta N^{-2} G |\zeta|(\tfrac{\cdot}{N})\|_\infty \leq \tfrac{1}{1 - \|G|V|\|_\infty} \|\delta N^{-2} G |\zeta|(\tfrac{\cdot}{N}) \|_\infty 
        \leq c\,\tfrac{\sqrt{\gamma}}{\sqrt{u}} R^2 \delta. 
    \end{equation}
    Thus there exists $\delta_1 := c_4(R)\sqrt{u} < 1$ such that for all $\delta < \delta_1$, one has 
    \begin{equation}\label{eq:samurai}
        \|G |V'| \|_\infty < 1,\quad 
        \|(I-GV)^{-1} \delta N^{-2} G |\zeta|(\tfrac{\cdot}{N})\|_\infty < 1.
    \end{equation}
    Moreover, the rest $\cR_{\delta,\eta,V}$ with $\eta = N^{-2} \zeta(\tfrac{\cdot}{N})$ can be bounded as
    \begin{equation}
        \cR_{\delta,\eta,V} \leq c N^{d-2}  R^d \frac{\gamma}{u} \frac{\delta^2 \frac{\sqrt{\gamma}}{\sqrt{u}} R^2 }{1 - \delta c' \frac{\sqrt{\gamma}}{\sqrt{u}} R^2  }
    \end{equation}
    and thus there exists $\delta_2 = c_5(\Delta,R) \sqrt{u} \leq c_4(R) \sqrt{u}$ such that  for all $\delta\leq \delta_2$:
    \begin{equation}
        - \delta N^{d-2} \Delta +  u\cR_{\delta,\eta,V} \leq -\tfrac{\delta \Delta}{2} N^{d-2}. 
    \end{equation}
    In view of~\eqref{eq:samurai} and using~\eqref{eq:formula2} we can finally apply Corollary~\ref{Cor_OccTime} for any $\delta < \delta_2$ and obtain
    \begin{equation}
        \begin{aligned}
            \bbP&\big[\langle \cL_u, V'\rangle_{\bbZ^d} \geq  u\langle V,\gamma_V^2\rangle_{\bbZ^d} +  u \delta \langle N^{-2} \zeta(\tfrac{\cdot}{N}), \gamma_V^2\rangle_{\bbZ^d} +t\big] \\
            &\leq \exp\Big\{ - (\sqrt{\gamma} - \sqrt{u})^2 \capa_{\bbZ^d}(C) - t + u \cR_{\delta, \eta, V} \Big\} \\ 
            & \leq \exp\Big\{ - \big[(\sqrt{\gamma} - \sqrt{u})^2 - \overline{u}\, \widehat{\varepsilon}(K) \big]\capa_{\bbZ^d}(C) - \big[\tfrac{\delta_2 \Delta}{2} - cR^d  (\overline{u}-\gamma) \big] N^{d-2}  \Big\},
        \end{aligned}
    \end{equation}
    which is what we wanted to prove by setting $ c_3(\Delta,R) := c_5(\Delta,R) \Delta/2$.
\end{proof}

\paragraph{\textbf{Step 5. Application of the solidification bounds}}
In this last step, we will essentially put together the results of the previous steps and use the solidification estimates to finalize the proof of Theorem~\ref{thm:MainResult}. This will prominently feature an argument involving a distinction between two types of $\kappa \in \cK_N$, that either give rise to `good' or `bad' interfaces $\Sigma$, see around~\eqref{eq:BoundBadGood}. Both cases will be dealt with separately, and in the (more delicate) case of `good' interfaces we need to invoke Propositions~\ref{ReplacementProp} and~\ref{PropOccTimeBounds} to obtain the required bounds.

We start with bounding the probability on the left-hand side in equation~\eqref{eq:MainResultSection4} by using Lemma~\ref{comparisonDistances}:
\begin{equation}
\label{eq:Step0}
\begin{split}
\bbP\big[ d_R(\scrL_{N,u}, \scrM^u_{\mathring{A}}) \geq \Delta ; \cD^u_N \big] & \leq \bbP\big[ d_{BL,R}(\scrL_{N,u}, \scrM^u_{\mathring{A}}) \geq \overline{\Delta} ; \cD^u_N \big],\\
\text{ with } \overline{\Delta} & = \frac{\Delta}{1 + 2 \frac{(\sqrt{d}R) \vee 1}{\int_{B_R} \scrM^u_{\mathring{A}}(x) \De x } }.
\end{split}
\end{equation}

Upon inspection of~\eqref{eq:uniformity}, we will now show that (using again the notation $B_{R,N} = (NB_R) \cap \bbZ^d$) 
\begin{equation}
\label{eq:CommonBoundOnSplit}
\begin{split}
\limsup_N \frac{1}{N^{d-2}} \sup_{x \in B_{R,N}} \log & \ \bbP\Big[\big|\langle\scrL_{N,u} - \scrM^u_{\mathring{A}}, \chi_{\epsilon}(\cdot-x/N)\rangle\big| \geq \tfrac{\overline{\Delta}}{4|B_R|} ; \cD^u_N\Big] \\
&\leq - \frac{1}{d} (\sqrt{\overline{u}} - \sqrt{u})^2 \capa(\mathring{A}) - c_6(\Delta,R,A,u),
\end{split}
\end{equation}
with $c_6(\Delta,R,A,u) \sim c_7(\Delta,R,A) \sqrt{u}$ as $u \rightarrow 0$. For a continuous, compactly supported function $\zeta : \mathbb{R}^d \rightarrow \mathbb{R}$ and $\widetilde{\Delta} > 0$, we introduce the shorthand notation
\begin{equation}
\cA_N^{\widetilde{\Delta}, \zeta} := \big\{ \big|\langle\scrL_{N,u} - \scrM^u_{\mathring{A}}, \zeta \rangle\big| \geq \widetilde{\Delta} \big\}.
\end{equation}
By \eqref{eq:SuperExponentialBound}, \eqref{eq:SmallCombCompl} and \eqref{eq:coarseGraining}, we see that with the notation $\Delta' = \frac{\overline{\Delta}}{4|B_R|}$:
\begin{equation}
\begin{split}
\limsup_N & \frac{1}{N^{d-2}}  \sup_{x \in B_{R,N}} \log  \ \bbP\big[\cA_N^{\Delta', \chi_\epsilon(\cdot - x/N)} \cap \cD^u_N \big] \\
& \leq \limsup_N \frac{1}{N^{d-2}} \sup_{x \in B_{R,N}} \sup_{\kappa \in \cK_N} \log \bbP[\cA_N^{\Delta', \chi_\epsilon(\cdot - x/N)} \cap \cD_{N,\kappa}].
\end{split}
\end{equation}
We will bound the term on the right-hand side of the above equation in different ways, depending on the nature of $\kappa \in \cK_N$: Recall the definition of the subset $\cK_N^\mu \subseteq \cK_N$ for $\mu > 0$ in~\eqref{eq:GoodInterfacesDef}. For `bad' situations $\kappa \in \cK_N \setminus \cK^\mu_N$, in which $\scrh_{\Sigma}$ differs from $\scrh_{\mathring{A}}$ by an amount exceeding $\mu$ as measured by the Dirichlet form, it suffices to focus on the event $\cD_{N,\kappa}$ alone to produce an additional cost in the exponential decay using the solidification result~\eqref{eq:SolidificationDirichletEnergy}. In `good' cases for $\kappa \in \cK^\mu_N$, we have to employ the uniform replacement of $\scrM^u_{\mathring{A}}$ by $\cM^u_C$ (see Proposition~\ref{ReplacementProp} of Step 3) and the occupation-time bounds (see Proposition~\ref{PropOccTimeBounds} of Step 4) and  argue that the relevant bounds hold uniformly when $x$ varies in $NB_R \cap \bbZ^d$. More precisely, we let $\mu > 0$ and bound
\begin{equation}
\label{eq:BoundBadGood}
\begin{split}
 \limsup_N& \frac{1}{N^{d-2}} \sup_{x \in B_{R,N}}  \sup_{\kappa \in \cK_N} \log \bbP[\cA_N^{\Delta', \chi_\epsilon(\cdot - x/N)} \cap \cD_{N,\kappa}] \leq \\
&\Big(\limsup_N\frac{1}{N^{d-2}}\sup_{x \in B_{R,N}} \sup_{\kappa \in \cK_N^\mu} \log \bbP[\cA_N^{\Delta', \chi_\epsilon(\cdot - x/N)} \cap \cD_{N,\kappa}]\Big) \\
 &\vee  \Big( \limsup_N\frac{1}{N^{d-2}} \sup_{\kappa \in \cK_N \setminus \cK^\mu_N} \log \bbP[\cD_{N,\kappa}] \Big).
\end{split} 
\end{equation} 
Let us first focus on the second part of the dichotomy, pertaining to the situation where $\kappa$ gives rise to a `bad' interface $\Sigma \subseteq \bbR^d$. By (4.52) of \cite{nitzschner2017solidification} and the argument leading up to it, one knows that for $\widetilde{\varepsilon} > 0$ small enough, 
\begin{equation}
\label{eq:lowerBoundbadCase}
\begin{split}
&\varlimsup_N\frac{1}{N^{d-2}} \sup_{\kappa \in \cK_N \setminus \cK^\mu_N} \log \bbP[\cD_{N,\kappa}] \\
& \leq - \Big( \sqrt{\gamma} - \tfrac{\sqrt{u}}{1 - \widetilde{\varepsilon}(\sqrt{\frac{\overline{u}}{u}} - 1)} \Big)(\sqrt{\gamma}- \sqrt{u}) \frac{1}{N^{d-2}} \varliminf_N \inf_{\kappa \in \cK_N \setminus \cK^\mu_N} \capa_{\bbZ^d}(C).
\end{split}
\end{equation}
Upon taking $\liminf_K$ in \eqref{eq:lowerBoundbadCase} and using Proposition A.1 of \cite{nitzschner2017solidification}, we get 
\begin{equation}
\label{eq:lowerBoundbadCase2}
\begin{split}
& \varlimsup_N\frac{1}{N^{d-2}} \sup_{\kappa \in \cK_N \setminus \cK^\mu_N} \log \bbP[\cD_{N,\kappa}] \\
& \leq - \Big( \sqrt{\gamma} - \tfrac{\sqrt{u}}{1 - \widetilde{\varepsilon}(\sqrt{\frac{\overline{u}}{u}} - 1)} \Big)(\sqrt{\gamma}- \sqrt{u}) \frac{1}{d} \varliminf_K \varliminf_N \inf_{\kappa \in \cK_N \setminus \cK^\mu_N} \capa(\Sigma).
\end{split}
\end{equation}
At this point, we will make use of the solidification result for Dirichlet forms,~\eqref{eq:SolidificationDirichletEnergy}. Let us sketch briefly, how $\Sigma$ can be interpreted as a porous interface: Recall the definition of the sets $U_0$ and $U_1$ associated to $\kappa \in \cK_N$ in~\eqref{eq:UnionBound}. Take $A' \subseteq \mathring{A}$ compact, and $\ell_\ast \geq 0$ (depending on $A, A'$) with the property that for large $N$ and all $\kappa \in \cK_N$, $d(A',U_1) \geq 2^{-\ell_\ast}$, see (4.50) of~\cite{nitzschner2017solidification}. By definition of $\kappa \in \cK_N$, one can argue with the help of a projection argument that for all $x \in \widetilde{\cS}$, $\capa(\bigcup_{z \in \cC_x} z + [-3L_0,4L_0]^d) \geq c(K)\widehat{L}_0^{d-2}$, thus one has
\begin{equation}
\label{eq:HittingProbPorous}
W_x[H_{\Sigma} < \tau_{10\tfrac{\widehat{L}_0}{N} }] \geq c(K), \text{ for all } x \in \partial U_1.
\end{equation}
We refer to (4.51) of~\cite{nitzschner2017solidification} for details of this argument. This ensures that $\Sigma$ for any choice of $\kappa \in \cK_N$ is a porous interface around $A'$ in the sense of \eqref{eq:ClassofporousInterf} and we can apply the solidification estimate~\eqref{eq:SolidificationDirichletEnergy}:
\begin{equation}
\label{eq:BoundSigma_LargeDirichletform}
\begin{split}
&\varliminf_N  \inf_{\kappa \in \cK_N \setminus \cK_N^\mu} \capa(\Sigma)  = \capa(A') + \varliminf_N \inf_{\kappa \in \cK_N \setminus \cK_N^\mu} (\capa(\Sigma) - \capa(A')) \\
&= \capa(A') + \varliminf_N \inf_{\kappa \in \cK_N \setminus \cK_N^\mu} \!\!\left( \cE(\scrh_{A'} - \scrh_{\Sigma}) + \capa(\Sigma) - \capa(A') - \cE(\scrh_{A'} - \scrh_{\Sigma})  \right) \\
&\stackrel{\eqref{eq:SolidificationDirichletEnergy}}{\geq} \capa(A') +  \varliminf_N \inf_{\kappa \in \cK_N \setminus \cK_N^\mu} \cE(\scrh_{A'} - \scrh_{\Sigma}) \\
&\geq \capa(A') + (\mu^{\frac{1}{2}} - \cE(\scrh_{\mathring{A}}-\scrh_{A'})^{\frac{1}{2}})_+^2,
\end{split}
\end{equation}
having used the inverse triangle inequality in the last step, see also (4.49)--(4.52) of~\cite{chiarini2018entropic} for a similar argument in the case of level-set percolation of the Gaussian free field. Combining~\eqref{eq:lowerBoundbadCase2} with~\eqref{eq:BoundSigma_LargeDirichletform} provides us in the case of `bad' interfaces $\kappa \in \cK_N \setminus \cK_N^\mu$ with the bound
\begin{equation}
\begin{split}
\label{eq:EndResultBadInterfaces}
&\limsup_N  \frac{1}{N^{d-2}} \log \bbP[\cD_{N,\kappa}] \\
& \leq -\Big( \sqrt{\gamma} - \tfrac{\sqrt{u}}{1 - \widetilde{\varepsilon}(\sqrt{\frac{\overline{u}}{u}} - 1)} \Big) \tfrac{\sqrt{\gamma}- \sqrt{u}}{d} \big(\capa(A') + (\mu^{\frac{1}{2}} - \cE(\scrh_{\mathring{A}}-\scrh_{A'})^{\frac{1}{2}})_+^2 \big).
\end{split}
\end{equation}

We will now turn to the more delicate bounds in the case of $\kappa \in \cK^\mu_N$, giving rise to `good' 
porous interfaces $\Sigma$, that is, those for which $\scrh_{\Sigma}$ and $\scrh_{A}$ are `close'. Here, we need to find a suitable upper bound on the first member of the maximum in \eqref{eq:BoundBadGood}. This will be done by combining Propositions~\ref{ReplacementProp} and~\ref{PropOccTimeBounds} from the previous two steps. Let $\mu > 0$ be small enough (depending only on $R$, $\Delta'$ and $A$) such that
\begin{equation}
\label{eq:Delta''}
\Delta' \tfrac{\epsilon^{d+1}}{\lip(\chi)\vee 1} - 8\overline{u} E(\mathbbm{1}_{B_{2R}})^{1/2} \mu^{1/2} > \tfrac{\Delta'}{2} \cdot  \tfrac{\epsilon^{d+1}}{\lip(\chi)\vee 1} =: \Delta''.
\end{equation}
For $N \geq N_0$ we conclude from the `uniform closeness' between $\scrM^u_{\mathring{A}}$ and $\cM^u_C$ \eqref{eq:ReplacementLALC} that  
\begin{equation}
\label{eq:InclusionProperty}
\cA^{2\Delta'', \zeta}_N \subseteq \bigg\{ \bigg\vert\langle \scrL_{N,u},\zeta\rangle - \frac{1}{N^d} \sum_{x \in \bbZ^d} \cM^u_C(x) \zeta(\tfrac{x}{N}) \bigg\vert \geq \Delta''  \bigg\},
\end{equation}
for every $\zeta \in \lip_1(B_R)$ with $\|\zeta\|_\infty \leq 1$ and every $\kappa \in \cK^\mu_N$. In particular,~\eqref{eq:InclusionProperty} holds for $\zeta = \tfrac{\chi_\epsilon(\cdot - y) \epsilon^{d+1}}{\lip(\chi)\vee 1}$, uniformly in $y\in B_R$. By \eqref{effectiveevent}, every choice of $\kappa \in \cK_N$ (and a fortiori every choice in $\kappa \in \cK^\mu_N$) gives rise to a system of boxes $B_z$ located at $z \in \cC \subseteq \bbZ^d$ such that every $B_z$ is good$(\alpha,\beta,\gamma)$ and fulfills $N_u(D_z) \geq \beta \capa_{\bbZ^d}(D_z)$. This brings us into a position where we can use the main result of Step 4, Proposition~\ref{PropOccTimeBounds}. Indeed, with the shorthand notation $\widetilde{\sup}( \cdot ) = \sup_{x \in B_{R,N}} \sup_{\kappa \in \cK_N^\mu}( \cdot )$, the first member of the maximum in \eqref{eq:BoundBadGood} is bounded as follows:
\begin{equation}
\begin{split}
& \varlimsup_N\tfrac{1}{N^{d-2}}\widetilde{\sup} \log \bbP[\cA_N^{\Delta', \chi_\epsilon(\cdot - x/N)} \cap \cD_{N,\kappa}] \\
& \leq \varlimsup_N\tfrac{1}{N^{d-2}} \widetilde{\sup} \log \bbP\bigg[\Big| \langle \scrL_{N,u},\chi_\epsilon(\cdot -\tfrac{x}{N}) \rangle -  \langle \cM^u_C, \tfrac{\chi_\epsilon(\tfrac{\cdot}{N} -\tfrac{x}{N})}{N^d} \rangle_{\bbZ^d} \Big| \geq \tfrac{\Delta'}{2}; \cD_{N,\kappa}\bigg] \\
& \leq \varlimsup_N\tfrac{1}{N^{d-2}}\widetilde{\sup} \log \bbP\bigg[ \Big| \langle \scrL_{N,u},\chi_\epsilon(\cdot -\tfrac{x}{N}) \rangle - 
\langle \cM^u_C, \tfrac{\chi_\epsilon(\tfrac{\cdot}{N} -\tfrac{x}{N})}{N^d} \rangle_{\bbZ^d} \Big|  \geq \tfrac{\Delta'}{2}; \Gamma
\bigg],
\end{split}
\end{equation}
where we used~\eqref{eq:InclusionProperty} in the first inequality and the notation~\eqref{eq:GammaDef}. We recall that for fixed $\epsilon > 0$, the functions $\chi_\epsilon(\cdot - \tfrac{x}{N})$, as $x$ varies in $B_{R,N}$ form a location family and have a common support contained in the $2\epsilon$-neighborhood of $B_R$. Moreover, the sup-norm $\|\chi_\epsilon(\cdot - \tfrac{x}{N})\|_\infty$ is independent of $x \in B_{R,N}$. Therefore, an application of the occupation-time bound~\eqref{eq:exponentialbound} yields
\begin{equation}
\label{eq:EndResultGoodInterfaces}
\begin{split}
& \varlimsup_N\frac{1}{N^{d-2}} \sup_{x \in B_{R,N}} \sup_{\kappa \in \cK_N^\mu} \log \bbP[\cA_N^{\Delta', \chi_\epsilon(\cdot - x/N)} \cap \cD_{N,\kappa}] \\
& \leq  - \varliminf_N   \inf_{\kappa \in \cK_N^\mu}  \big[(\sqrt{\gamma} - \sqrt{u})^2 - \overline{u}\, \widehat{\varepsilon}(K) \big]\tfrac{\capa_{\bbZ^d}(C)}{N^{d-2}} - c_3(\tfrac{\Delta'}{2}, R)\sqrt{u} + cR^d  (\overline{u}-\gamma)  .
\end{split}
\end{equation}
As before, we can take $\liminf_K$ and arrive with the help of Proposition A.1 of \cite{nitzschner2017solidification} at 
\begin{equation}
\begin{split}
& \varlimsup_N\tfrac{1}{N^{d-2}} \sup_{x \in B_{R,N}} \sup_{\kappa \in \cK_N^\mu} \log \bbP[\cA_N^{\Delta', \chi_\epsilon(\cdot - x/N)} \cap \cD_{N,\kappa}] \\
& \leq  - \varliminf_N \varliminf_K \tfrac{1}{N^{d-2}} \inf_{\kappa \in \cK_N^\mu} \tfrac{1}{d} (\sqrt{\gamma} - \sqrt{u})^2 \capa(\Sigma) - \big[c_3(\tfrac{\Delta'}{2}, R)\sqrt{u} - cR^d  (\overline{u}-\gamma) \big].
\end{split}
\end{equation}
We can now argue as above \eqref{eq:HittingProbPorous} and see that $\Sigma$ is again in the class of porous interfaces for $A' \subseteq \mathring{A}$ compact and $U_0$, $U_1$ as in \eqref{eq:UnionBound}.
By the capacity lower bound \eqref{eq:SolidificationEstimate}, we obtain that 
\begin{equation}
\liminf_N \inf_{\kappa \in \cK_N} \capa(\Sigma) \geq \capa(A').
\end{equation}
To conclude the proof, we go back to \eqref{eq:BoundBadGood} and combine the two upper bounds~\eqref{eq:EndResultBadInterfaces} and~\eqref{eq:EndResultGoodInterfaces}: 
\begin{equation}
\begin{split}
& \limsup_N \frac{1}{N^{d-2}} \sup_{x \in B_{R,N}} \sup_{\kappa \in \cK_N} \log \bbP[\cA_N^{\Delta', \chi_\epsilon(\cdot - x/N)} \cap \cD_{N,\kappa}] \leq \\
&-\Big( \sqrt{\gamma} - \tfrac{\sqrt{u}}{1 - \widetilde{\varepsilon}(\sqrt{\frac{\overline{u}}{u}} - 1)} \Big) \tfrac{\sqrt{\gamma}- \sqrt{u}}{d} \big(\capa(A') + (\mu^{\frac{1}{2}} -\cE(\scrh_{\mathring{A}}-\scrh_{A'})^{\frac{1}{2}})_+^2 \big)\\
&  \vee -\left(\frac{1}{d} (\sqrt{\gamma} - \sqrt{u})^2 \capa(A') - \big[c_3(\tfrac{\Delta'}{2}, R)\sqrt{u} - c R^d  (\overline{u}-\gamma) \big] \right).
\end{split}
\end{equation}
By letting successively $\widetilde{\varepsilon}$ tend to zero and $\alpha,\beta, \gamma$ to $\overline{u}$, and finally $A' \uparrow \mathring{A}$, we obtain, in view of~\eqref{eq:increasing_capacity} and of $\cE(\scrh_{\mathring{A}}-\scrh_{A'})\leq \capa(\mathring{A}) - \capa(A')$
\begin{equation}
\begin{split}
& \limsup_N \frac{1}{N^{d-2}} \sup_{x \in B_{R,N}} \sup_{\kappa \in \cK_N} \log \bbP[\cA_N^{\Delta', \chi_\epsilon(\cdot - x/N)} \cap \cD_{N,\kappa}] \leq \\
& - \frac{1}{d}(\sqrt{\overline{u}} - \sqrt{u})^2 \capa(\mathring{A}) - c_3(\tfrac{\Delta'}{2},R)\sqrt{u}  \wedge \frac{\mu}{d} (\sqrt{\overline{u}} - \sqrt{u})^2 \capa(\mathring{A}),
\end{split}
\end{equation}
and the claim~\eqref{eq:CommonBoundOnSplit} follows by choosing $\mu > 0$ such that~\eqref{eq:Delta''} holds and by setting $c_6(\Delta,R,A,u) :=  c_3(\tfrac{\Delta'}{2},R)\sqrt{u}  \wedge \frac{\mu}{d} (\sqrt{\overline{u}} - \sqrt{u})^2 \capa(\mathring{A})$. Inserting this bound into~\eqref{eq:uniformity} now results in 
 \begin{equation}
        \begin{aligned}
            \varlimsup_{N} \frac{1}{N^{d-2}} &\log \bbP\Big[d_{BL,R}(\scrL_{N,u}, \scrM^u_{\mathring{A}}) \geq \overline{\Delta}; \cD^u_N\Big] \\
            & \leq - \frac{1}{d}(\sqrt{\overline{u}} -\sqrt{u})^2 \capa(\mathring{A}) - c_6(\Delta,R,A,u) \wedge 1.
        \end{aligned}
    \end{equation}
By~\eqref{eq:Step0} we can now set $c_1(\Delta,R,A,u) := c_6(\Delta,R,A,u) \wedge 1$ and since $c_6(\Delta,R,A,u) \sim c_7(\Delta,R,A)\sqrt{u}$ as $u \rightarrow 0$, one also has that $c_1(\Delta,R,A,u) \sim c_2(\Delta,R,A)\sqrt{u}$. This finishes the proof of the Theorem.
\hfill $\square$

\begin{remark}
\label{Remark_RW}
One may wonder, whether Theorem \ref{thm:MainResult} or Corollary \ref{thm:CorollaryPush} provide also some insight into the behavior of the occupation-time profile of a random walk when conditioned on disconnecting $A_N$ from $S_N$.
Heuristically, the random walk can be interpreted as limit of random interlacements with vanishing intensity. In~Corollary 6.4 of \cite{sznitman2015disconnection} and Corollary 4.4 of~\cite{nitzschner2017solidification} asymptotic large deviation upper bounds on the disconnection probability for the random walk started at the origin were obtained. These bounds relied on a coupling between random interlacements conditioned on $0 \in \cI^u$ at arbitrarily small $u > 0$ and the random walk. Similar coupling strategies have also been helpful in the discussion of the existence of macroscopic holes within a large box created by a random walk, see Section 4 of~\cite{sznitman2018macroscopic}. 

However, the specific form of the additional cost for disconnection by random interlacements and a deviation between $\scrL_{N,u}$ and $\cM^u_{\mathring{A}}$ involves an explicit dependence on $u$ that does not allow a simple coupling argument to conclude that for $A$ regular, also the occupation-time profile of a simple random walk, and $\scrM_A = u_\ast\scrh_{A}^2$ have to be close conditionally on disconnection and assuming $\overline{u} = u_\ast = u_{\ast\ast}$. In spite of this, one may still ask whether it is true that
\begin{equation}
\lim_N E_0\big[d_R(\scrL_N, \scrM_{\mathring{A}}) \wedge 1 | \cD_N \big] = 0, \text{ where } \cD_N = \Big\lbrace A_N \stackrel{\mathrm{Range}\{(X_t)_{t \geq 0} \}^c }{\centernot\longleftrightarrow} S_N \Big\rbrace?
\end{equation}
In investigating such a question, it should be noted that the strategy to obtain the lower bound on the disconnection probability for random walk in~\cite{li2017lower} using so-called `tilted walks' differs substantially from the construction with tilted interlacements in~\cite{li2014lowerBound}. 
\end{remark}

\appendix

\section{Comparison of Capacities}
In this appendix, we state and prove Proposition \ref{ComparisonCapacities}, which extends a certain scaling property of the Brownian capacity of a box to a set of distant boxes. The approach we are taking in order to prove our result is somewhat inspired by a similar comparison between the discrete capacity of a set of boxes and the Brownian capacity of its $\bbR^d$-filling, as performed in the appendix of \cite{nitzschner2017solidification} (in fact we will use this result at a certain stage in our proof).

We will first introduce some notation and recall some facts concerning capacities. Let $L \geq 1$ and $K \geq 100$ be integers and consider discrete boxes $B_z$ of size $L$ located at $z \in \bbZ^d$, i.e.
\begin{equation}
B_z = z + [0,L)^d \cap \bbZ^d.
\end{equation}
Their $\bbR^d$-fillings will be denoted by $\widehat{B}_z$, following the notation of the appendix in \cite{nitzschner2017solidification} i.e.
\begin{equation}
\widehat{B}_z = z + [0,L]^d (\subseteq \bbR^d).
\end{equation}
By convention, $B = B_0$ and $\widehat{B} = \widehat{B}_0$. Moreover, we define for $r \in (0,\tfrac{1}{4})$ the sets $B_z^{(r)}, B_{z,(r)} \subseteq \bbZ^d$ and $\widehat{B}_z^{(r)},\widehat{B}_{z,(r)} \subseteq \bbR^d$ as 
\begin{equation}\label{eq:BoxMod}
    \begin{aligned}
         &B_z^{(r)}  = z + [-rL, (1+r)L)^d \cap \bbZ^d,  \\ 
         &B_{z,(r)} = z + [rL, (1-r)L)^d \cap \bbZ^d, \\
        &\widehat{B}_z^{(r)} = z + [-rL, (1+r)L)^d, \\ 
        &\widehat{B}_{z,(r)} = z + [rL, (1-r)L)^d.
    \end{aligned}
\end{equation}
We will be interested in the capacity of a set of boxes located at points at mutual distance bigger than $KL$. More precisely, we let $\cC \subseteq \bbZ^d$ stand for a non-empty finite set of points with
\begin{equation}
\label{eq:PropertyDistances}
z \neq z' \in \cC \qquad \Rightarrow \qquad |z - z'|_\infty \geq KL,
\end{equation}
and we define the sets 
\begin{align}
&C  = \bigcup_{z \in \cC} B_z, & &  C^{(r)} = \bigcup_{z \in \cC} B_z^{(r)}, & &C_{(r)} = \bigcup_{z \in \cC} B_{z,(r)},  \\
&\Gamma = \bigcup_{z \in \cC}\widehat{B}_z, & & \Gamma^{(r)} = \bigcup_{z \in \cC}  \widehat{B}_z^{(r)}, & & \Gamma_{(r)} = \bigcup_{z \in \cC}  \widehat{B}_{z,(r)}.
\end{align}

Moreover, we will need the following perturbation and scaling results for equilibrium measures and capacities:

\begin{equation}
\label{eq:PerturbationformulaEqMeasures}
    \begin{minipage}{0.75\linewidth}
       For any $\delta > 0$ and $K \geq c(\delta)$ and $\cC$ fulfilling \eqref{eq:PropertyDistances}, it holds that $(1-\delta)\mu \leq e_C \leq (1+\delta)\mu$ with $\mu = \sum_{z \in \cC} e_C(B_z) \overline{e}_{B_z}$,
    \end{minipage}
\end{equation}
(see Proposition 1.5 of \cite{sznitman2015disconnection}), and 

\begin{equation}
\label{eq:AsymptoticEqualityCap}
\begin{split}
\liminf_{K, L \rightarrow \infty} \inf_{\cC} d \frac{\capa_{\bbZ^d}(C)}{\capa(\Gamma)} \geq 1, \qquad \limsup_{K, L \rightarrow \infty} \inf_{\cC} d \frac{\capa_{\bbZ^d}(C)}{\capa(\Gamma)} \leq 1.
\end{split}
\end{equation}
Finally, one knows that as $L \rightarrow \infty$ the capacities of $B$ and $\widehat{B}$ are equivalent, that is 
\begin{equation}
\label{eq:CapacitiesEquivalent}
a_L = d \frac{\capa_{\bbZ^d}(B)}{\capa(\widehat{B})} \rightarrow 1 \text{ as } L \rightarrow \infty,
\end{equation} 
see Lemma 2.2 of~\cite{bolthausen1993Critical} and p.301 of~\cite{spitzer2013principles}.

Note that \eqref{eq:PerturbationformulaEqMeasures}, \eqref{eq:AsymptoticEqualityCap} and \eqref{eq:CapacitiesEquivalent} continue to hold true uniformly in $r \in (0,\tfrac{1}{4})$ if we consistently replace all boxes $B_z$ and $\widehat{B}_z$ entering the quantities of interest by slight enlargements or diminutions according to \eqref{eq:BoxMod}.

The main result of this appendix is the following uniform comparison between the Brownian capacities of the sets $\Gamma$, $\Gamma^{(r)}$ and $\Gamma_{(r)}$:
\begin{prop}
\label{ComparisonCapacities}
\begin{align}
\label{eq:UpperComparison}
\sup_{\cC} \frac{\capa(\Gamma^{(r)})}{\capa(\Gamma)} \leq (1 + \delta_{K,L}) (1 + 2r)^{d-2}, \\
\label{eq:LowerComparison}
\inf_{\cC} \frac{\capa(\Gamma_{(r)})}{\capa(\Gamma)} \geq (1 - \delta_{K,L}) (1 - 2r)^{d-2},
\end{align}
where $0 \leq \delta_{K,L} \rightarrow 0$ as $K,L \rightarrow \infty$ and $\cC$ varies in the class of non-empty finite subsets of $\bbZ^d$ with the property \eqref{eq:PropertyDistances}. 
\end{prop}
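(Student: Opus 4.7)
My plan is to combine the three approximations provided above --- namely \eqref{eq:PerturbationformulaEqMeasures} for the structure of the equilibrium measure, \eqref{eq:AsymptoticEqualityCap} for the passage between $\capa_{\bbZ^d}(C)$ and $\capa(\Gamma)$, and \eqref{eq:CapacitiesEquivalent} for the single-box version --- with the exact Brownian scaling $\capa(\widehat{B}^{(r)}) = (1+2r)^{d-2}\capa(\widehat{B})$ and $\capa(\widehat{B}_{(r)}) = (1-2r)^{d-2}\capa(\widehat{B})$ of a single box. The goal is to reduce the proposition to a linear-algebra comparison between the systems governing the normalized weights $\overline{c}_z := e_C(B_z)/\capa_{\bbZ^d}(B_z)$ and $\overline{c}^{(r)}_z$.

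First I would apply \eqref{eq:AsymptoticEqualityCap} to each of $\Gamma$, $\Gamma^{(r)}$, $\Gamma_{(r)}$. Since the boxes of $\Gamma^{(r)}$ and $\Gamma_{(r)}$ have sides $(1\pm 2r)L$ while their center separations remain $\geq KL$, the effective separation parameters relative to the new box sides are bounded below by $\tfrac{4K}{5}$ and $K$ respectively for $r \in (0, \tfrac{1}{4})$. Hence \eqref{eq:AsymptoticEqualityCap} applies uniformly in $r$ and reduces the proposition to the comparison of the discrete ratios $\capa_{\bbZ^d}(C^{(r)})/\capa_{\bbZ^d}(C)$ and $\capa_{\bbZ^d}(C_{(r)})/\capa_{\bbZ^d}(C)$.

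Next I would apply $G$ to the pointwise estimate in \eqref{eq:PerturbationformulaEqMeasures} and evaluate at a central point $x_z \in B_z$ (where $h_C(x_z) = h_{B_z}(x_z) = 1$) to obtain an approximate linear system
\begin{equation*}
\overline{c}_z + \sum_{z' \neq z} M_{zz'}\,\overline{c}_{z'} \in \big[(1+\delta)^{-1}, (1-\delta)^{-1}\big], \qquad M_{zz'} := h_{B_{z'}}(x_z),
\end{equation*}
whose off-diagonal entries are $O(K^{2-d})$ via the last-exit decomposition $h_{B_{z'}}(x_z) \leq c\,\capa_{\bbZ^d}(B_{z'})/|x_z - z'|^{d-2}$. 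The analogous system for $\overline{c}^{(r)}$ has off-diagonal entries equal to $(1+2r)^{d-2}(1+o_{K,L}(1))\,M_{zz'}$: the scaling factor comes from \eqref{eq:CapacitiesEquivalent} combined with the single-box Brownian scaling applied to each $B_{z'}^{(r)}$, and the asymptotic $g(x,y) \sim C_d/|x-y|^{d-2}$ at distance $\gg L$. Since all boxes $B_z$ have the same capacity, $\capa_{\bbZ^d}(C) = \capa_{\bbZ^d}(B)\sum_z \overline{c}_z$ and $\capa_{\bbZ^d}(C^{(r)}) = (1+o_{K,L}(1))(1+2r)^{d-2}\capa_{\bbZ^d}(B)\sum_z \overline{c}^{(r)}_z$, so the proposition reduces to showing $\sum_z \overline{c}^{(r)}_z \leq (1+o_{K,L}(1))\sum_z \overline{c}_z$ (and the analogous reverse inequality for $\Gamma_{(r)}$).

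The final comparison is the technical heart of the argument. Writing $\Delta := \overline{c}^{(r)} - \overline{c}$ and setting $\lambda := (1+2r)^{d-2} \geq 1$, the two systems yield $(I + \lambda M)\Delta = -(\lambda-1)M\overline{c} + o_{K,L}(1)$; pairing with $\mathbf{1}$ and using that each weighted sum $\sum_{z'\neq z} M_{zz'}\overline{c}_{z'}$ is bounded by $1+O(\delta)$ (a direct consequence of the system and the non-negativity of $\overline{c}_z$) should give $\sum_z \Delta_z \leq o_{K,L}(1)\sum_z \overline{c}_z$. The lower bound \eqref{eq:LowerComparison} follows symmetrically with $(1-2r)^{d-2} \leq 1$, reversing the sign of the leading correction. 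The chief obstacle is uniformity in $|\cC|$: even though the entries $M_{zz'}$ are uniformly small, the cardinality $|\cC|$ is unrestricted and crude bounds on column sums $\sum_z M_{zz'}$ grow with $|\cC|$. This is circumvented by exploiting the probabilistic bound $\overline{c}_z \leq 1$ (from $e_C(x) \leq e_{B_z}(x)$ for $x \in B_z \subseteq C$) together with the pointwise, rather than summed, nature of \eqref{eq:PerturbationformulaEqMeasures}, both of which hold uniformly in $\cC$.
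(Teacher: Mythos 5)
Your Steps 1--3 are fine as far as they go (the reduction via \eqref{eq:AsymptoticEqualityCap}, the approximate system $\overline{c}_z+\sum_{z'\neq z}M_{zz'}\overline{c}_{z'}=1+O(\delta)$ obtained by applying $G$ to \eqref{eq:PerturbationformulaEqMeasures} and evaluating on $C$, and the entrywise replacement $M^{(r)}_{zz'}=(1+2r)^{d-2}(1+O(1/K))M_{zz'}$ are all correct). The gap is in the last step, precisely the one you call the technical heart. Your system fixes each row value only up to an \emph{additive} error $O(\delta)+O(1/K)$; after pairing with $\mathbf{1}$ these errors accumulate to $|\cC|\,O(\delta+1/K)$, and your conclusion requires this to be $o_{K,L}(1)\sum_z\overline{c}_z$. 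But $\sum_z\overline{c}_z=\capa_{\bbZ^d}(C)/\capa_{\bbZ^d}(B)$ can be much smaller than $|\cC|$, uniformly in nothing: for fixed $K,L$, take $|\cC|=m^d$ boxes at spacing exactly $KL$ filling a cube of side $mKL$; then $\capa_{\bbZ^d}(C)\leq c\,(mKL)^{d-2}$ by monotonicity, so $\sum_z\overline{c}_z/|\cC|\leq cK^{d-2}/m^{2}\to 0$ as $m\to\infty$. Hence the summed error cannot be absorbed into $o(1)\sum_z\overline{c}_z$ no matter how large $K,L$ are. Moreover, pairing with $\mathbf{1}$ leaves the term $\lambda\langle\mathbf{1},M\Delta\rangle=\langle M^{T}\mathbf{1},\Delta\rangle$, which involves exactly the column sums you admit are unbounded, and $\Delta$ has no sign. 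The two facts you invoke to circumvent this ($\overline{c}_z\leq 1$ and the pointwise nature of \eqref{eq:PerturbationformulaEqMeasures}) do not bear on either problem, so the sentence ``should give $\sum_z\Delta_z\leq o_{K,L}(1)\sum_z\overline{c}_z$'' is an assertion rather than a proof.

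For contrast, the paper never writes a componentwise system. It introduces the auxiliary measure $\nu=\sum_{z\in\cC}e_{C^{(r)}}(B_z^{(r)})\,\overline{e}_{B_z}$, supported on $C$ with total mass $\capa_{\bbZ^d}(C^{(r)})$, proves the pointwise bound $G\nu\leq\tfrac{1}{1-\delta}\,\capa_{\bbZ^d}(B^{(r)})/\capa_{\bbZ^d}(B)$ on $C$ (using $h_{B_z}\leq h_{B_z^{(r)}}$, then \eqref{eq:PerturbationformulaEqMeasures} for $C^{(r)}$ with the enlarged boxes, then $Ge_{C^{(r)}}\leq 1$), and pairs this with $e_C$: by symmetry of $g$ and $h_C\equiv 1$ on the support of $\nu$, $\langle e_C,G\nu\rangle_{\bbZ^d}=\capa_{\bbZ^d}(C^{(r)})$, giving $\capa_{\bbZ^d}(C^{(r)})\leq\tfrac{1}{1-\delta}\tfrac{\capa_{\bbZ^d}(B^{(r)})}{\capa_{\bbZ^d}(B)}\capa_{\bbZ^d}(C)$ in one stroke; \eqref{eq:CapacitiesEquivalent}, \eqref{eq:AsymptoticEqualityCap} and Brownian scaling then finish as in your Step 1. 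The decisive difference is that there the error from \eqref{eq:PerturbationformulaEqMeasures} enters as a single multiplicative factor $\tfrac{1}{1-\delta}$ on the total capacity, never as a per-box additive error, so no comparison between $|\cC|$ and $\sum_z\overline{c}_z$ is ever needed. If you want to rescue your route you would have to test the system against a capacity-proportional weight (e.g.\ against $\overline{c}$ itself, turning the comparison into one of quadratic forms), which in effect reproduces this duality argument.
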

\begin{proof}
We define the measure 
\begin{equation}
\nu(y) = \sum_{z \in \cC} e_{C^{(r)}}(B_z^{(r)}) \overline{e}_{B_z}(y),
\end{equation}
where $e_F$ is the equilibrium measure of a set $F \subseteq \bbZ^d$, see \eqref{eq:EqMeasure}, and $\overline{e}_F$ for a set with positive capacity stands for the normalized equilibrium measure of $F$, see~\eqref{eq:NormEqMeausre}. By construction, $\nu(\bbZ^d) = \capa_{\bbZ^d}(C^{(r)})$ and $\nu$ is supported on $C$. Moreover, we have for $x \in C$:
\begin{equation}
\begin{split}
&\sum_{y \in \bbZ^d} g(x,y) \nu(y)  = \sum_{z \in \cC} e_{C^{(r)}}(B_z^{(r)}) \sum_{y \in \bbZ^d} g(x,y) e_{B_z}(y) \frac{1}{\capa_{\bbZ^d}(B)} \\
&\ \stackrel{\eqref{eq:EquilibriumPotential}}{=} \frac{1}{\capa_{\bbZ^d}(B)}  \sum_{z \in \cC} e_{C^{(r)}}(B_z^{(r)}) \underbrace{h_{B_z}(x)}_{\leq h_{B_z^{(r)}}(x)} \\
&\ \stackrel{\eqref{eq:EquilibriumPotential}}{\leq}  \frac{\capa_{\bbZ^d}(B^{(r)})}{\capa_{\bbZ^d}(B)} \sum_{y \in \bbZ^d} g(x,y) \sum_{z \in \cC} e_{C^{(r)}}(B_z^{(r)}) \overline{e}_{B_z^{(r)}}(y) \\
&\ \stackrel{\eqref{eq:PerturbationformulaEqMeasures}}{\leq} \frac{\capa_{\bbZ^d}(B^{(r)})}{\capa_{\bbZ^d}(B)} \frac{1}{1-\delta} \sum_{y \in \bbZ^d} g(x,y) e_{C^{(r)}}(y) \leq  \frac{1}{1-\delta} \frac{\capa_{\bbZ^d}(B^{(r)})}{\capa_{\bbZ^d}(B)},
\end{split}
\end{equation}
for $K \geq c(\delta)$ large enough. Upon multiplication of this inequality with $e_C(x)$ and summation over $x \in \bbZ^d$, we obtain
\begin{equation}
\label{eq:ToDealWithMiddleTerm}
\begin{split}
\capa_{\bbZ^d}(C^{(r)}) & \leq \frac{1}{1-\delta} \frac{\capa_{\bbZ^d}(B^{(r)})}{\capa_{\bbZ^d}(B)} \capa_{\bbZ^d}(C) \\
& \stackrel{\eqref{eq:CapacitiesEquivalent}}{\leq}   \frac{1}{1-\delta} \widetilde{a}_L  \frac{\capa(\widehat{B}^{(r)})}{\capa(\widehat{B})} \capa_{\bbZ^d}(C)
\end{split}
\end{equation}
with $\widetilde{a}_L \rightarrow 1$ as $L \rightarrow \infty$.
Rearranging terms, we arrive at
\begin{equation}
\frac{\capa(\Gamma^{(r)})}{\capa(\Gamma)} \leq \frac{\capa(\Gamma^{(r)})}{\capa_{\bbZ^d}(C^{(r)})} \cdot  \frac{\capa_{\bbZ^d}(C^{(r)})}{\capa_{\bbZ^d}(C)} \cdot \frac{ \capa_{\bbZ^d}(C)}{\capa(\Gamma)}.
\end{equation}
Taking the supremum over $\cC$ fulfilling \eqref{eq:PropertyDistances} and using \eqref{eq:AsymptoticEqualityCap} for the first and last term in the above inequality and~\eqref{eq:ToDealWithMiddleTerm} one can find $0 \leq \delta_{K,L} \rightarrow 0$ as $K,L \rightarrow \infty$ such that
\begin{equation}
\sup_{\cC} \frac{\capa(\Gamma^{(r)})}{\capa(\Gamma)} \leq (1 + \delta_{K,L}) \frac{\capa(\widehat{B}^{(r)})}{\capa(\widehat{B})},
\end{equation}
for $K,L$ large enough. Since the Brownian capacity is translation invariant and fulfills $\capa(\alpha D) = \alpha^{d-2} \capa(D)$ for any $\alpha >0$ and $D \subseteq \bbR^d$ open or closed and bounded, the quotient of capacities in the last term is $(1+2r)^{d-2}$, which finishes the proof of \eqref{eq:UpperComparison}. For \eqref{eq:LowerComparison}, the proof is performed in a similar manner. 
\end{proof}
\textbf{Acknowledgements.}
The authors wish to thank Alain-Sol Sznitman for useful discussions and valuable comments at various stages of this project.


\begin{thebibliography}{10}
    \bibitem{bolthausen1993Critical}
    E.~Bolthausen and J.-D. Deuschel.
    \newblock Critical large deviations for {G}aussian fields in the phase
      transition regime, {I}.
    \newblock {\em The Annals of {P}robability}, 21(4):1876--1920, 1993.
    
    \bibitem{bolthausen2001entropic}
    E.~Bolthausen, J.-D. Deuschel, and G.~Giacomin.
    \newblock Entropic repulsion and the maximum of the two-dimensional harmonic
      crystal.
    \newblock {\em The Annals of {P}robability}, 29(4):1670--1692, 2001.
    
    \bibitem{bolthausen1995entropic}
    E.~Bolthausen, J.-D. Deuschel, and O.~Zeitouni.
    \newblock Entropic repulsion of the lattice free field.
    \newblock {\em Communications in {M}athematical {P}hysics}, 170(2):417--443,
      1995.
    
    \bibitem{chiarini2018entropic}
    A.~Chiarini and M.~Nitzschner.
    \newblock Entropic repulsion for the {G}aussian free field conditioned on
      disconnection by level-sets.
    \newblock {\em arXiv preprint arXiv:1808.09947}, 2018.
    
    \bibitem{deuschel1999pathwise}
    J.-D. Deuschel and G.~Giacomin.
    \newblock Entropic repulsion for the free field: Pathwise characterization in
      $d \geq 3$.
    \newblock {\em Communications in {M}athematical {P}hysics}, 206(2):447--462,
      1999.
    
    \bibitem{drewitz2014introduction}
    A.~Drewitz, B.~R{\'a}th, and A.~Sapozhnikov.
    \newblock {\em An {I}ntroduction to {R}andom {I}nterlacements}.
    \newblock Springer, 2014.
    
    \bibitem{drewitz2014local}
    A.~Drewitz, B.~R{\'a}th, and A.~Sapozhnikov.
    \newblock Local percolative properties of the vacant set of random
      interlacements with small intensity.
    \newblock {\em Ann. Inst. Henri Poincar{\'e} Probab. Stat.}, 50(4):1165--1197,
      2014.
    
    \bibitem{duminil2017sharp}
    H.~Duminil-Copin, A.~Raoufi, and V.~Tassion.
    \newblock Sharp phase transition for the random-cluster and {P}otts models via
      decision trees.
    \newblock {\em Ann. Math.}, 189:75--99, 2019.
    
    \bibitem{einmahl1989extensions}
    U.~Einmahl.
    \newblock Extensions of results of {K}oml{\'o}s, {M}ajor, and {T}usn{\'a}dy to
      the multivariate case.
    \newblock {\em Journal of {M}ultivariate {A}nalysis}, 28(1):20--68, 1989.
    
    \bibitem{fukushima2010dirichlet}
    M.~Fukushima, Y.~Oshima, and M.~Takeda.
    \newblock {\em Dirichlet forms and symmetric {M}arkov processes}, volume~19.
    \newblock Walter de Gruyter, 2010.
    
    \bibitem{gilbarg2015elliptic}
    D.~Gilbarg and N.S. Trudinger.
    \newblock {\em Elliptic partial differential equations of second order}.
    \newblock Springer, 2015.
    
    \bibitem{lawler2013intersections}
    G.F. Lawler.
    \newblock {\em Intersections of random walks}.
    \newblock Springer Science \& Business Media, 2013.
    
    \bibitem{li2017lower}
    X.~Li.
    \newblock A lower bound for disconnection by simple random walk.
    \newblock {\em The Annals of Probability}, 45(2):879--931, 2017.
    
    \bibitem{li2014lowerBound}
    X.~Li and A.-S. Sznitman.
    \newblock A lower bound for disconnection by random interlacements.
    \newblock {\em Electron. J. Probab.}, 19:1--26, 2014.
    
    \bibitem{li2015large}
    X.~Li and A.-S. Sznitman.
    \newblock Large deviations for occupation time profiles of random
      interlacements.
    \newblock {\em Probability Theory and Related Fields}, 161(1-2):309--350, 2015.
    
    \bibitem{nitzschner2018entropic}
    M.~Nitzschner.
    \newblock Disconnection by level sets of the discrete {G}aussian free field and
      entropic repulsion.
    \newblock {\em Electron. J. Probab.}, 23:1--21, 2018.
    
    \bibitem{nitzschner2017solidification}
    M.~Nitzschner and A.-S. Sznitman.
    \newblock Solidification of porous interfaces and disconnection.
    \newblock {\em To appear in J.\ Eur.\ Math.\ Society. Preprint available at
      arXiv:1706.07229}, 2017.
    
    \bibitem{port2012brownian}
    S.~Port and C.~Stone.
    \newblock {\em Brownian motion and classical potential theory}.
    \newblock Academic Press, 1978.
    
    \bibitem{spitzer2013principles}
    F.~Spitzer.
    \newblock {\em Principles of random walk}, volume~34.
    \newblock Springer Science \& Business Media, 2013.
    
    \bibitem{sznitman2009domination}
    A.-S. Sznitman.
    \newblock On the domination of random walk on a discrete cylinder by random
      interlacements.
    \newblock {\em Electron. J. Probab.}, 14:1670--1704, 2009.
    
    \bibitem{sznitman2010vacant}
    A.-S. Sznitman.
    \newblock Vacant set of random interlacements and percolation.
    \newblock {\em Ann. Math.}, 171:2039--2087, 2010.
    
    \bibitem{sznitman2012isomorphism}
    A.-S. Sznitman.
    \newblock An isomorphism theorem for random interlacements.
    \newblock {\em Electronic Communications in Probability}, 17:1--9, 2012.
    
    \bibitem{sznitman2012random}
    A.-S. Sznitman.
    \newblock Random interlacements and the {G}aussian free field.
    \newblock {\em The Annals of Probability}, 40(6):2400--2438, 2012.
    
    \bibitem{sznitman2015disconnection}
    A.-S. Sznitman.
    \newblock Disconnection and level-set percolation for the {G}aussian free
      field.
    \newblock {\em Journal of the Mathematical Society of Japan}, 67(4):1801--1843,
      2015.
    
    \bibitem{sznitman2017disconnection}
    A.-S. Sznitman.
    \newblock Disconnection, random walks, and random interlacements.
    \newblock {\em Probability Theory and Related Fields}, 167(1-2):1--44, 2017.
    
    \bibitem{sznitman2018macroscopic}
    A.-S. Sznitman.
    \newblock On macroscopic holes in some supercritical strongly dependent
      percolation models.
    \newblock {\em The Annals of Probability}, 47(4):2459--2493, 2019.
    
    \bibitem{teixeira2011fragmentation}
    A.~Teixeira and D.~Windisch.
    \newblock On the fragmentation of a torus by random walk.
    \newblock {\em Commun. Pure Appl. Math.}, 64(12):1599--1646, 2011.
    
    \bibitem{teschl2009mathematical}
    G.~Teschl.
    \newblock {\em Mathematical methods in quantum mechanics}, volume~99.
    \newblock American Mathematical Society, 2009.
    
    \bibitem{villani2008optimal}
    C.~Villani.
    \newblock {\em Optimal transport: old and new}, volume 338.
    \newblock Springer Science \& Business Media, 2008.
    
\end{thebibliography}
\end{document}